\definecolor{mycolor}{rgb}
{0.796, 0.255, 0.329}
\tikzstyle{vertex}=[circle, draw, inner sep=2pt, fill=white]
\newcommand{\s}{{\mathcal S}}
\newcommand{\e}{{\varepsilon}}
\newcommand{\E}{{\mathbb E}}
\renewcommand{\P}{{\mathbb P}}
\newcommand{\R}{{\mathbb R}}
\newcommand{\N}{{\mathbb N}}
\newcommand{\Acal}{{\mathcal A}}
\newcommand{\Dcal}{{\mathcal D}}
\newcommand{\Ecal}{{\mathcal E}}
\newcommand{\Fcal}{{\mathcal F}}
\newcommand{\Gcal}{{\mathcal G}}
\newcommand{\Hcal}{{\mathcal H}}
\newcommand{\Ical}{{\mathcal I}}
\newcommand{\Lcal}{{\mathcal L}}
\newcommand{\Ucal}{{\mathcal U}}
\newcommand{\Mcal}{{\mathcal M}}
\newcommand{\Rcal}{{\mathcal R}}
\newcommand{\Scal}{{\mathcal S}}
\newcommand{\Nu}{{N}}
\newcommand{\Id}{{\textup{Id}}}
\newcommand{\B}{{B}}
\newcommand{\aaa}{{a}}
\newcommand{\A}{{a}}
\newcommand{\M}{{M}}
\newcommand{\y}{{y}}
\newcommand{\z}{{z}}
\newcommand{\cc}{{\overline c}}
\newcommand{\Span}{\textup{span}}
\newcommand{\ootimes}{\underline\otimes}
\newcommand{\fdot}{{\,\cdot\,}}
\newtheorem{theorem}{Theorem}
\newtheorem{corollary}[theorem]{Corollary}
\newtheorem{definition}[theorem]{Definition}
\newtheorem{remark}[theorem]{Remark}
\newtheorem{example}[theorem]{Example}
\newtheorem{lemma}[theorem]{Lemma}
\newtheorem{proposition}[theorem]{Proposition}
\numberwithin{equation}{section}
\numberwithin{theorem}{section}
\begin{document}
\date{}
\title{Infinite dimensional polynomial processes}
\author{Christa Cuchiero\thanks{Vienna University of Economics and Business, Institute of Statistics and Mathematics, Welthandelsplatz 1, A-1020 Wien, Austria, christa.cuchiero@wu.ac.at}
\and Sara Svaluto-Ferro\thanks{Faculty of Mathematics, Vienna University, Oskar-Morgenstern-Platz 1, A-1090 Wien, Austria, sara.svaluto-ferro@univie.ac.at.\newline
The authors gratefully acknowledge financial support by the Vienna Science and Technology Fund (WWTF) under grant MA16-021.
}}

\maketitle

\begin{abstract}
We introduce polynomial processes taking values in an arbitrary Banach space $\B$ via their infinitesimal generator $L$ and the associated martingale problem. 
We obtain two representations of the (conditional) moments 
in terms of solutions of a system of ODEs 
on the truncated tensor algebra of dual respectively bidual spaces. 
We illustrate how the well-known moment formulas for finite dimensional or probability-measure valued polynomial processes can be deduced in this general framework. 
As an application we consider polynomial forward variance curve models which appear in particular as Markovian lifts of (rough) Bergomi-type volatility models. Moreover, we show that the signature process of a $d$-dimensional Brownian motion is polynomial and derive its  expected value via the polynomial approach. 

\end{abstract}

\noindent\textbf{Keywords:}  polynomial processes, infinite dimensional Markov processes, dual processes, forward variance models, rough volatility, VIX options, signature process\\
\noindent \textbf{MSC (2010) Classification:} 60J25, 60H15


\section{Introduction}

Polynomial processes in finite dimensions, introduced in \cite{CKT:12} (see also
\cite{FL:16}), constitute a class of time-homogeneous Markovian It\^o semimartingales which are inherently tractable: conditional moments
can be expressed through  a \emph{deterministic dual process} which is the solution of a linear ODE. This is the so-called \emph{moment formula}. They form a rich class that includes Wright-Fisher diffusions (\cite{K:64}) from population genetics, Wishart correlation matrices (\cite{AA:13}), and affine processes (\cite{DFS:03}), just to name a few. The computational advantages due to the moment formula are obvious and  have led to a wide range of applications,  in particular in mathematical finance and population genetics. 
In mathematical finance, this concerns especially interest rate theory, stochastic volatility models, life insurance liability modeling, variance swaps, and stochastic portfolio theory
(see, e.g., \cite{AFP:16}, \cite{BZ:16}, \cite{FGM:16}, \cite{C:19}). 
In population genetics, dual processes associated to moments 
and simple procedures to compute them play an equally important role: the Wright-Fisher diffusion with seed-bank component (see, e.g., \cite{BCKW:16} and the references therein) is for instance an important example of a recently investigated two-dimensional polynomial process in this field.

The main goal of this paper is to introduce the concept of  \emph{polynomial processes on a general Banach space $B$} and to provide a corresponding \emph{moment formula}. This leads in particular 
to a comprehensive theory covering practically \emph{all} finite and infinite dimensional state spaces, hence far beyond the specific cases considered so far. The second main goal is to illustrate the applicability of the moment formula and the powerful and easy-to-use results  which arise therefrom. The potential of this formula can be appreciated by means of several practically relevant examples. 

Let us here first focus on examples from mathematical finance. The most recent
appearance of infinite dimensional polynomial processes is certainly in the field of \emph{rough volatility}  (see, e.g., \cite{ALV:07, GJR:18, BLP:16}).
This roughness can be seen as a generic non-Markovianity  of the corresponding volatility processes. By lifting these processes to \emph{infinite dimensions,} it is however possible to recover the Markov property.
 Intriguingly such infinite dimensional models often stem from the class of polynomial processes: Hawkes processes, rough Heston (\cite{ER:19}), and rough Wishart processes can be viewed as infinite dimensional affine and thus polynomial processes as shown in \cite{AE:19, CT:18, CT:19}. In the current article we show that the rough Bergomi model (\cite{BFG:16}) also pertains to the class of infinite dimensional polynomial processes. In other areas, like in stochastic portfolio theory the most flexible and tractable models appear again to be (measure-valued) polynomial (\cite{CLS:18}) and also the Zakai equation from filtering theory belongs to this class. Let us here also mention that the (sub)class of affine processes taking values in Hilbert spaces has recently been studied in \cite{STY:19}, in particular from an existence and pathwise uniqueness point of view.

In population genetics, infinite dimensional models appear in form of the well-known measure-valued diffusions such as the Fleming--Viot process, the Super-Brownian motion, and the Dawson--Watanabe superprocess (see, e.g., \cite{E:00} and the references therein). All these examples are polynomial processes.

The deeper reason behind this predominance can be explained by a universal approximation
property of  polynomial dynamics in the space of all stochastic dynamics driven by say, Brownian motion (or many other continuous processes). This is based on the properties of the
\emph{signature process,}  which plays a prominent role in rough path theory introduced by \cite{L:98} and which serves as a regression basis for solutions of  general stochastic differential equation (see, e.g., \cite{LLN:13}).
 As the signature of many processes, in particular of $d$-dimensional Brownian motion,  also turns out to be an infinite dimensional polynomial process, this suggests an inherent universality of the polynomial class (see Section \ref{sec:sig} for more details).
We refer also to~\cite{CGGOT:19} where a randomized polynomial signature process is used as regression basis.

Passing to infinite dimensional polynomial models is also supported 
from a purely computational point of view.  
Indeed, the synergy of increasing computer power with machine learning techniques has enabled to treat high dimensional linear PDEs (or infinite dimensional linear ODEs), exactly the kind of equation that arises in our context,  very efficiently, for instance via neural network approaches (see e.g.~\cite{BBGJJ:18}).
 Beside these techniques, the nice symmetries that typically characterize polynomial processes allow to employ highly efficient algorithms, as those proposed by \cite{HPR:18}.

Let us now outline our approach:  we fix a state space $\Scal\subseteq B$ and define \emph{polynomial processes} as $\mathcal{S}$-valued  solutions of martingale problems for certain linear operators $L$, which we coin \emph{polynomial} as well. For this class of processes we derive representations of the moments in terms of two moment formulas.

More precisely, we first introduce polynomials on $B$ following the definition of polynomials in finite dimensions. In this spirit, we first define homogeneous polynomials of degree $k$ to be bounded linear maps of $\y^{\otimes k}$, where $\y^{\otimes k}$ denotes the $k$-fold    tensor product of $\y\in \B$ with itself. Then, we define polynomials on $B$ of degree $k$ as linear combinations of homogeneous polynomials on $B$ of degree less or equal to $k$. That is, a \emph{polynomial on $\B $ with coefficients $\A_0\in (\B^{\otimes 0})^*,\ldots, \A_k\in (\B^{\otimes k})^*$} is defined as
\begin{equation*}
p(\y)=\A_0+\langle \A_1,\y\rangle+\ldots+\langle \A_k,\y^{\otimes k}\rangle,
\end{equation*}
 where $\B^{\otimes j}$ denotes the $j$-fold symmetric
  algebraic tensor product of $\B$ equipped with some crossnorm, $(B^{\otimes j})^*$ denotes the dual space of $B^{\otimes j}$, and $\langle \fdot, \fdot \rangle$ denotes the pairing between $B^{\otimes j}$ and $(B^{\otimes j})^*$.

We then define polynomial operators as linear operators acting on classes of cylindrical polynomials, i.e.~functions $p$ of the form
\[
 p(y)=\phi\left(\langle a_1, \y\rangle, \ldots, \langle a_d, \y\rangle \right),
\]
where $\phi$ is a polynomial on $\mathbb{R}^d$ and $a_1,\ldots,a_d \in D$ $\subseteq B^*$. Their defining property consists in mapping each polynomial $p$ to a (not necessarily cylindrical) polynomial $Lp$ without increasing the degree. 
Polynomial processes on $\mathcal{S} \subseteq B$ are then defined to be $\Scal$-valued solutions of martingale problems for polynomial operators.

This allows to associate to $L$  two families of linear operators,  $(L_k)_{k \in \mathbb{N}}$ and $(M_k)_{k \in \mathbb{N}}$, that we call \emph{dual} and \emph{bidual}, respectively. Those names are mnemonic for the spaces where those linear operators act. Indeed, $L_k$  maps  the coefficients vector of a cylindrical polynomial $p$ to the coefficients vector of the polynomial $Lp$, and is thus defined on  the truncated graded algebra $\bigoplus_{j=0}^k (\B^{\otimes j})^*$. 
The bidual operator $M_k$  is the adjoint operator of $L_k$ when pairing $\bigoplus_{j=0}^k (\B^{\otimes j})^*$ with $\bigoplus_{j=0}^k (\B^{\otimes j})^{**}$ and is thus defined on the truncated graded algebra $\bigoplus_{j=0}^k (\B^{\otimes j})^{**}$.
To both operators we can associate a system of $k+1$ dimensional ODEs on the respective truncated algebras. The corresponding solution will then  describe the evolution of the polynomial process' moments
(see Theorem~\ref{thm1} and Theorem~\ref{dual2}).

Note that our approach differs from the inspiring paper by~\cite{BDK:18} on the related notion of \emph{multilinear processes} in Banach spaces in the following sense. While in~\cite{BDK:18} the moment formula itself is the  defining property and examples which satisfy it, e.g.~independent increment processes, are studied, we tackle the problem from a different angle. Indeed, we show systematically that for Banach space valued processes defined via polynomial operators the moment formula holds true.

 We then apply this abstract theory to specific Banach spaces.
For finite dimensions we show that the operator $L_k$ corresponds to the matrix representation (with respect to a certain basis) of the infinitesimal generator $L$ restricted to the set of all polynomials up to degree $k$. The operator  $M_k$ is then simply the transpose of this matrix and the moment formulas boil down to the well known matrix exponential representation of the conditional moments of finite dimensional polynomial processes  (see \cite{CKT:12} and \cite{FL:17} for more details). Turning then to an infinite dimensional setting, we show that for the state space  of probability measures on $\mathbb{R}$, the linear ODEs corresponding to $L_k$ and $M_k$ can be identified with the Kolmogorov  backward equation and the Kolmogorov forward equation, respectively, of an associated $\mathbb{R}^k$-valued Markov process.
 As our main application, we introduce a class of 
 polynomial forward variance curve models which take values in a Hilbert space as in \cite{F:01}. These are SPDEs with polynomial characteristics describing the evolution of the forward variance curve, that is $x \mapsto \mathbb{E}[V_{t+x}|\mathcal{F}_t]$ where $V$ denotes the spot variance of some asset. This setup includes
for instance the (rough) Bergomi model as considered in  \cite{B:05, BFG:16, JMM:18}.
  We show in particular how to exploit the moment formula to price options on VIX in such models.  
 As last example  we illustrate that the signature process of a $d$-dimensional Brownian motion is polynomial and derive its  expected value (see, e.g., \cite{FH:14}) via the polynomial approach. 

The remainder of the article is organized as follows: In Section \ref{IIs1} we introduce some basic notation and definitions. In Section \ref{IIsec:2} we define  polynomials on $B$ and polynomial operators. Section \ref{sec_ex_un} is devoted to the introduction of polynomial processes as well as the formulation and proofs of the moment formulas.
We then conclude with Section \ref{sec:Examples}, which covers several examples and applications, including generic polynomial operators of L\'evy  type, probability measure valued polynomial processes, forward variance models, and the signature process of a $d$-dimensional Brownian motion.

\subsection{Notation and basic definitions}\label{IIs1}

Throughout this paper let $\B$ be a  Banach space. We denote by $\B^*$ its dual space, i.e.~the space of linear continuous
functionals with the strong dual norm
\[ 
{\|\A\|}_{*} = \sup_{\|\y\| \leq 1} |\A(\y)|.
\]
We introduce now some basic notions in the context of tensor products. For more details we refer to \cite{R:13}.
For two elements $\y_1,\y_2\in \B$ we denote by $\y_1\otimes \y_2$ the corresponding algebraic symmetric tensor product  and by
$$\B\otimes \B:=\Big\{\sum_{i=1}^d\alpha_i \y_i\otimes \y_i\ : \ \alpha_i\in \R,\ \y_i\in \B,\ d\in \N\Big\}$$
the algebraic tensor product of $\B$ with itself. For an element  $\A\in \B^*$ the linear map $\A\otimes \A:\B\otimes \B\to\R$ is defined by
$$\A\otimes \A(\y_1\otimes \y_2)=\A(\y_1)\A(\y_2).$$
For $\A_1, \A_2\in \B^*$ we then get  $\A_1\otimes \A_2$ via polarization. 
With $\|\fdot\|_\times$ we denote a crossnorm on $\B\otimes \B$, i.e.~a norm that satisfies
\begin{enumerate}
\item $\|\y_1\otimes \y_2\|_\times=\|\y_1\|\|\y_2\|$ for each $\y_1,\y_2\in \B$, and
\item\label{ii} $\sup_{\y\in \B\otimes \B, \|\y\|_\times\leq1}|\A_1\otimes \A_2(\y)|= \|\A_1\|_{*}\|\A_2\|_{*}$ for each $\A_1,\A_2\in \B^*$.
\end{enumerate}
Throughout we shall fix some crossnorm with respect to which we define the corresponding dual spaces.
We then denote by $(\B\otimes \B)^*=(B^{\otimes 2})^*$ the  dual of space of $(\B\otimes \B,\|\fdot\|_\times)$.
Observe that $\A_1\otimes \A_2\in(\B\otimes \B)^*$ for each $\A_1,\A_2\in \B^*$.
Furthermore, we denote by $\|\fdot\|_{*2}$ the strong dual norm on $(\B\otimes \B)^*$, i.e.
\begin{align}\label{eqn4}
\|\A\|_{*2}:=\sup\{|\A(\y) |:\ \y\in \B\otimes \B, \|\y\|_\times\leq1\}.
\end{align}
The $k$-fold tensor products $\y^{\otimes k}, \B^{\otimes k}, \A^{\otimes k}, (\B^{\otimes k})^*$ and the norm $\|\fdot\|_{*k}$ for $k\in\N$ are defined analogously. For $k=0$, we identify $(\B^{\otimes 0})^*$ with $\R$, such that $\A(\y^{\otimes 0}):=\A\in \R$ for all $\y\in \B$. We also set $y^{\otimes 0}:=1$.
Generally, for subsets in $D_j \subseteq (\B^{\otimes j})^*$ or $\Scal_j\subseteq(\B^{\otimes j})$  we write $\vec\A\in \bigoplus_{j=0}^k D_j$ for $\vec\A=(\A_0,\ldots,\A_k)$ with  $\A_j\in D_j$ and  $\vec \y\in \bigoplus_{j=0}^k \Scal_j$ for $\vec \y=(\y_0,\ldots,\y_k)$ with  $\y_j\in \Scal_j$. Moreover, for $\y\in\B$ we write $\overline \y$ for the vector $(1,\y,\ldots,\y^{\otimes k})$.

\section{Polynomials on $\B$ and polynomial operators} \label{IIsec:2}

The goal of this section is to introduce polynomials on $B$ and to define all sorts of operators that we shall need when dealing with polynomial processes.

For $\A\in (\B^{\otimes k})^*$, $\y\in \B$, and $k\in \N_0$ we use the notation
$\langle \A,\y^{\otimes k}\rangle:=\A(\y^{\otimes k}).$
A  \emph{polynomial on $\B $ with coefficients $\A_0\in (\B^{\otimes 0})^*,\ldots, \A_k\in (\B^{\otimes k})^*$} is then defined as
\begin{equation}\label{eqn2}
p(\y)=\A_0+\langle \A_1,\y\rangle+\ldots+\langle \A_k,\y^{\otimes k}\rangle.
\end{equation}
Setting $\vec\A:=(\A_0,\ldots,\A_k)\in\bigoplus_{j=0}^k  (\B^{\otimes j})^*$ we say that \emph{$\vec\A$ is the coefficients vector corresponding to $p$}. The degree of a polynomial $p(\y)$, denoted by $\deg(p)$, is the largest $j$ such that $\A_j$ is not the zero function, and $-\infty$ if $p$ is the zero polynomial.
As short hand notation, we shall often denote $p(\y)$ via
\begin{equation}\label{eqn501}
p(\y)= (\vec a \cdot \overline{\y}).
\end{equation}
Recall here that $\overline \y$ stands for the vector $(1,\y,\ldots,\y^{\otimes k})$.
More generally, we shall write 
\begin{align} \label{eq:dotrel}
(\vec\A\cdot\vec {\y}):=\A_0+\langle \A_1,\y_1\rangle+\ldots+\langle \A_k,\y_k\rangle
\end{align} 
for $\vec \A \in \bigoplus_{j=0}^k (B^{\otimes j})^* $  and $\vec \y \in \bigoplus_{j=0}^k (B^{\otimes j})^{**}$.
Next, we denote by
$$P:=\{\y\mapsto p(\y)\ :\ p\text{ is a polynomial on }\B \}$$
 the algebra of all polynomials on $\B $ regarded as real-valued maps, equipped with the pointwise addition and multiplication.
In practice, it is often convenient to consider a subspace of polynomials with more regular coefficients. Fix $D\subseteq \B^*$.
In our context
a \emph{cylindrical polynomial with coefficients in $D$} is a function $p: \B  \to \mathbb{R}$ of the form 
$
p(\y):= \phi(\langle \A_1, \y \rangle,\ldots,\langle \A_d, \y \rangle), 
$
where $d\in\N$, $\phi:\R^d\to\R$ is a polynomial, and $\A_i\in D$ for each $i\in\{1,\ldots,d\}$. 
The space of cylindrical polynomials with coefficients in $D$ is defined by
\[
P^D=\Span \{\y \mapsto \langle  \A, \y \rangle^k\, : k \in \mathbb{N}_0, \A \in D\}.
\]
Since for  $\A_1\in (\B^{\otimes {k_1}})^*$ and $\A_2\in (\B^{\otimes k_2})^*$ it holds 
$
\langle \A_1,\y^{\otimes k_1}\rangle\langle \A_2,\y^{\otimes k_2}\rangle=\langle \A_1\otimes \A_2,\y^{\otimes (k_1+k_2)}\rangle,
$
 we can equivalently write
$P^D=\{\y \mapsto (\vec \A \cdot \overline{\y})\, : k \in \mathbb{N}_0, \A_j \in D^{\otimes j}\}$.

 We now define polynomial operators, which constitute a class of possibly unbounded linear operators acting on polynomials. They are not defined on all of $P$ in general, but only on the subspace $P^D$ for some   subspace $D \subseteq \B^*$. An analog of this notion has appeared previously in connection with finite-dimensional and measure-valued polynomial processes; see e.g.~\cite{CKT:12,FL:16, CLS:18}.

\begin{definition}\label{IIdef1}
Fix $\s\subseteq \B$. A linear operator $L\colon P^D\to P$ is called $\Scal$-polynomial if for every $p\in P^D$ there is some $q\in P$ such that $q|_{\s}=Lp|_{\s}$ and
$$\deg(q) \le \deg(p).$$
\end{definition}

\begin{remark}
Recall from Section \ref{IIs1} that we chose  to work with the symmetric tensor product. The reason for this is that a non-symmetrized tensor product would lead to an unnecessary redundancy: since $\langle\A\otimes b,\y^{\otimes 2}\rangle=\langle \A,\y\rangle\langle b,\y\rangle=\langle b\otimes \A,\y^{\otimes 2}\rangle$ the polynomial with coefficient $\A\otimes b$ coincides with the one with coefficient $b\otimes \A$.
\end{remark}

\subsection{Dual operators}

 We would now like to associate to an $\Scal$-polynomial operator a family of so-called \emph{dual} operators  $(L_k)_{k \in \mathbb{N}}$ which are linear operators mapping the coefficients vector of $p$ to the coefficients vector of $Lp$.

 If $\Scal=\B$, the definition of those operators is easily achievable. Indeed, 
 in this case the representation provided in \eqref{eqn2} is unique (see Lemma~\ref{lem4} for more details) and we can identify each polynomial 
 with its coefficients vector.
  For a $\B $-polynomial operator $L$, this means that $L$  can be uniquely identified with 
 a family of operators $(L_k)_{k\in \N}$,  
 where $L_k: \bigoplus_{j=0}^k D^{\otimes j}\to\bigoplus_{j=0}^k (\B^{\otimes j})^*$ maps the coefficients vector of $p$ to the coefficients vector of $Lp$, 
 for each $p\in P^D$ with $\deg(p)\leq k$. By letting $L_k^j:  \bigoplus_{j=0}^k D^{\otimes j}\to (\B^{\otimes j})^*$ be the operator mapping the coefficients vector of $p$ to the $j$-th coefficient of $Lp$, we can write $L_k\vec\A=(L_k^0\vec\A,\ldots,L_k^k\vec\A)$ for all $\vec\A\in \bigoplus_{j=0}^k D^{\otimes j}$. It is important to note that the operators $L_k$ and $L_k^j$ inherit linearity from $L$.
 
 If $\Scal\subsetneq \B $, two different polynomials can coincide on $\Scal$. This is for instance the case if $\Scal=\{\y \in \B :\langle \A_1,\y\rangle=\langle \A_2,\y\rangle\}$ for some $\A_1 \neq \A_2 \in B^*$. In such a situation there is no one-to-one correspondence between the restriction of a polynomial to $\Scal$ and its coefficients vector. However, it is still possible to define a dual operator that maps the coefficients vector of $p$ to the coefficients vector of $q$, where $q$ is \emph{a}  polynomial such that $Lp|_\Scal=q|_\Scal$ and $\deg(q)\leq \deg(p)$. Since the choice of such a $q$ may not be unique, the linearity of such an operator is a priori not clear. We show in Lemma~\ref{lem12} that one can always choose $q$ in a such way that linearity  of the dual operator is satisfied. We can thus conclude that to each $\Scal$-polynomial operator $L$ we can associate a family of dual operators  $(L_k)_{k \in \mathbb{N}}$, rigorously introduced in the following definition. Recall that the paring $(\ \fdot\ )$ has been introduced in \eqref{eqn501}.

\begin{definition} \label{IIeq:L on Dk}
Let $L\colon P^D\to P$ be an $\Scal$-polynomial operator and fix $k\in \N$. 
A \emph{$k$-th dual operator} $L_k: \bigoplus_{j=0}^k D^{\otimes j}\to\bigoplus_{j=0}^k (\B^{\otimes j})^*$ is a linear operator such that $L_k\vec\A=:(L_k^0\vec\A,\ldots, L_k^k\vec\A)$ satisfies
$$
Lp(\y)=(L_k \vec \A  \cdot \overline {\y})\qquad \text{for all }\y\in \Scal,
$$
where $p(\y):=(\vec \A\cdot\overline \y)$. Whenever $L_k$ is a closable operator, we still denote its closure\footnote{We refer for instance to Chapter 1 in \cite{EK:05} for the precise definiton.} by $L_k: \Dcal(L_k)\to\bigoplus_{j=0}^k (\B^{\otimes j})^*$ and its domain by $\Dcal(L_k)\subseteq\bigoplus_{j=0}^k(\B^{\otimes j})^*$.
 \end{definition}

We illustrate this notion by means of the very well studied one-dimensional Jacobi diffusion.

\begin{example}\label{ex1}
Let $\B=\B^*=D=\R$, $\Scal:=[0,1]$, and recall that $\R\otimes \R=\R$. Let $P$ denote the space of all polynomials on $\R$ and let $L:P\to P$ be the polynomial operator given by
$$Lp(\y)=\y(1-\y)p''(\y).$$
For each $\vec\A:=(\A_0,\ldots,\A_k)\in \R^{k+1}$ fix then $p_{\vec\A}(\y):=\A_0+\A_1\y+\ldots+\A_k\y^k$ and compute
\begin{equation}\label{eqn7}
Lp_{\vec \A}(\y)=2\A_2\y+\ldots+(-j(j-1)\A_{j}+(j+1)j\A_{j+1})\y^{j}+\ldots+(-k(k-1)\A_k)\y^{k}.
\end{equation}
Observe that $Lp_{\vec \A}$ is again a polynomial of degree at most $k$, showing that $L$ is indeed $\R$-polynomial, and thus $[0,1]$-polynomial.
Moreover, from \eqref{eqn7} one can see that the $k$-th dual operator $L_k:\R^{k+1}\to\R^{k+1}$ corresponding to $L$ is given by
  $L_k^{j}\vec\A=-j(j-1)\A_{j}+(j+1)j\A_{j+1}$ for $j\in\{0,\ldots,k-1\}$ and $L_k^k\vec\A=-k(k-1)\A_k.$
$L_k$ can thus be identified with the unique matrix $G_k$ such that
$L_k\vec \A=G_k\vec\A$. 
\end{example}

\begin{remark}\label{rem3}
Observe that whenever a $k$-th dual operator $L_k$ satisfies $L_k(0,\ldots,D^{\otimes j},\ldots,0)\subseteq (0,\ldots,(\B^{\otimes j})^*,\ldots,0)$ for $j\in\{0, \ldots, k\}$, one can define auxiliary operators $\Lcal_j:D^{\otimes j}\to(\B^{\otimes j})^*$
 such that
$$L_k\vec \A=(\Lcal_0\A_0,\ldots, \Lcal_k\A_k)$$
for each $\vec \A\in  \bigoplus_{j=0}^k D^{\otimes j}$ and in turn also for each  $\vec \A\in (\Dcal(\Lcal_0),\ldots,\Dcal(\Lcal_k)):=  \Dcal(L_k)$.
\end{remark}

\subsection{Bidual operators}

Next we introduce the notion of a \emph{bidual} operator, which is a slightly more delicate.   Its domain of definition 
\begin{equation}\label{eqn3}
\Dcal_k:=\Span\bigg\{\overline \y\in \bigoplus_{j=0}^k\Scal^{\otimes j} \colon |Lp_{\A_i}(\y)|\leq C_\y \|\A_i\|_{*i}\text{ for all }\A_i\in D^{\otimes i}\text{ and } i\in\{0,\ldots,k\}\bigg\}
\end{equation}
where $p_{\A_i}(\y):=\langle \A_i,\y^{\otimes i}\rangle$, $C_\y\in \R$, and $\|\fdot\|_{*i}$ is given by \eqref{eqn4} (extended to higher order). In words, $\Dcal_k$ contains the linear combinations of all $\overline \y=(1,\y,\ldots,\y^{\otimes k})$ such that the linear operator from $(D^{\otimes i},\|\fdot\|_{*i})$ to $\mathbb{R}$ given by
$$\A_i\mapsto  
Lp_{\A_i}(\y)$$
is bounded (and thus continuous), for each $i\in\{0,\ldots,k\}$. Again, recall the pairing $(\ \fdot\ )$ defined in \eqref{eqn501} and $\overline \y:=(1,\y,\ldots,\y^{\otimes k})$.

\begin{definition}\label{def4}
Fix $k\in \N$ and let $\Dcal_k$ be the set defined in \eqref{eqn3}.
A \emph{$k$-th bidual operator}  $\M_k:\Dcal_k \to\bigoplus_{j=0}^k (\B^{\otimes j})^{**}$ is a linear operator 
satisfying 
$$
Lp( \y)= (\vec \A \cdot M_k \overline{y}),\qquad \text{for all }\vec \A\in \bigoplus_{j=0}^k D^{\otimes k}
$$
where $p(\y):=(\vec\A\cdot\overline y)$.  
\end{definition}

The motivation to work with the potentially large bidual spaces is given in Remark \ref{rem:motivation} below.
As one would expect, dual and bidual operators are strongly connected. We exploit this relation in the following lemma.
\begin{lemma}\label{lem1}
Fix a $k$-th dual operator $L_k$ and a $k$-th bidual operator $\M_k$. Then $L_k$ and $\M_k$ are adjoint with respect to the relation $(\ \fdot\ )$, defined in \eqref{eq:dotrel}, meaning that
\begin{align}\label{eqn1}
(L_k \vec \A \cdot \vec \y)=(\vec \A \cdot M_k \vec \y)
\end{align}
for each $\vec\A\in \bigoplus_{j=0}^k D^{\otimes k}$ and $\vec\y\in \Dcal_k$. Whenever $L_k$ is a closable operator, \eqref{eqn1} holds also for all $\vec \A\in \Dcal(L_k)$. 
\end{lemma}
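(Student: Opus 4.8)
The plan is to verify the identity \eqref{eqn1} by unwinding both sides through the defining relations of $L_k$ and $M_k$, applied to the polynomial $p$ with coefficients vector $\vec\A$. First I would fix $\vec\A\in\bigoplus_{j=0}^k D^{\otimes k}$ and $\vec\y\in\Dcal_k$, and set $p(\y):=(\vec\A\cdot\overline\y)$, so that $p\in P^D$ with $\deg(p)\le k$. For $\vec\y$ of the special form $\overline\y=(1,\y,\ldots,\y^{\otimes k})$ with $\y\in\Scal$ such that $\overline\y\in\Dcal_k$, Definition~\ref{IIeq:L on Dk} gives $Lp(\y)=(L_k\vec\A\cdot\overline\y)$, which is exactly the left-hand side of \eqref{eqn1} evaluated at $\vec\y=\overline\y$; on the other hand Definition~\ref{def4} gives $Lp(\y)=(\vec\A\cdot M_k\overline\y)$, which is the right-hand side. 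Hence the two sides agree on every $\overline\y$ with $\y\in\Scal$, $\overline\y\in\Dcal_k$. Since $\Dcal_k$ is by definition \eqref{eqn3} the linear span of such elements, and since both $\vec\y\mapsto(L_k\vec\A\cdot\vec\y)$ and $\vec\y\mapsto(\vec\A\cdot M_k\vec\y)$ are linear in $\vec\y$ (the first because the pairing $(\,\fdot\,)$ is linear in its second argument, the second because $M_k$ is linear by Definition~\ref{def4} and again the pairing is linear), the identity extends from the spanning set to all of $\Dcal_k$ by linearity.

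The one point requiring a little care is that $\Dcal_k$ is defined to contain only those $\overline\y$ for which the relevant linear functionals $\A_i\mapsto Lp_{\A_i}(\y)$ are bounded, and one should check that this is precisely the condition making $M_k\overline\y$ a well-defined element of $\bigoplus_{j=0}^k(\B^{\otimes j})^{**}$, so that the right-hand side of \eqref{eqn1} makes sense; this is immediate from the discussion preceding Definition~\ref{def4}, since boundedness of $\A_i\mapsto Lp_{\A_i}(\y)$ in the norm $\|\fdot\|_{*i}$ is exactly what guarantees the $j$-th component of $M_k\overline\y$ lies in $(\B^{\otimes j})^{**}$. I would also note that the evaluation argument above only used $Lp(\y)$ for $\y\in\Scal$, which is all that Definitions~\ref{IIeq:L on Dk} and~\ref{def4} control, so there is no issue with the non-uniqueness of the polynomial $q$ representing $Lp|_\Scal$: both dual and bidual relations are stated in terms of the same quantity $Lp(\y)$ on $\Scal$, hence they are automatically compatible.

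For the final assertion, suppose $L_k$ is closable and let $\vec\A\in\Dcal(L_k)$. Choose a sequence $\vec\A^{(n)}\in\bigoplus_{j=0}^k D^{\otimes k}$ with $\vec\A^{(n)}\to\vec\A$ and $L_k\vec\A^{(n)}\to L_k\vec\A$ in $\bigoplus_{j=0}^k(\B^{\otimes j})^*$. Fix $\vec\y\in\Dcal_k$. By the case already proved, $(L_k\vec\A^{(n)}\cdot\vec\y)=(\vec\A^{(n)}\cdot M_k\vec\y)$ for every $n$. For any fixed $\vec\z\in\bigoplus_{j=0}^k(\B^{\otimes j})^{**}$ the map $\vec w\mapsto(\vec w\cdot\vec\z)$ is continuous on $\bigoplus_{j=0}^k(\B^{\otimes j})^*$ (it is a finite sum of terms each bounded by $\|w_j\|_{*j}\|\z_j\|_{*{*j}}$), and likewise $\vec w\mapsto(\vec w\cdot\vec\y)$ is continuous on $\bigoplus_{j=0}^k D^{\otimes j}$ equipped with the induced norms since $M_k\vec\y$ has finite bidual norms. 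Passing to the limit on both sides — using $\vec z=M_k\vec\y$ on the left and the continuity in the first slot on the right — yields $(L_k\vec\A\cdot\vec\y)=(\vec\A\cdot M_k\vec\y)$, as claimed. The main (and essentially only) obstacle is bookkeeping: making sure the pairing $(\,\fdot\,)$ is being used consistently across $\bigoplus_{j=0}^k D^{\otimes j}$, $\bigoplus_{j=0}^k(\B^{\otimes j})^*$, and $\bigoplus_{j=0}^k(\B^{\otimes j})^{**}$, and that the definition \eqref{eqn3} of $\Dcal_k$ really does guarantee the right-hand side of \eqref{eqn1} is finite termwise.
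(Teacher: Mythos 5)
Your proof is correct and takes essentially the same route as the paper's one-line argument: both $(L_k\vec\A\cdot\overline\y)$ and $(\vec\A\cdot M_k\overline\y)$ equal $Lp_{\vec\A}(\y)$ for every $\y\in\Scal$ with $\overline\y\in\Dcal_k$, directly from the defining identities of $L_k$ (Definition~\ref{IIeq:L on Dk}) and $M_k$ (Definition~\ref{def4}). You additionally spell out the extension from the spanning set $\{\overline\y\}$ to all of $\Dcal_k$ by linearity, and the passage to $\vec\A\in\Dcal(L_k)$ via closability and continuity of the pairing, both of which the paper leaves implicit.
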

\begin{proof}
The result follows by noting that $(L_k \vec \A \cdot \overline \y)=Lp_{\vec \A}( \y)=(\vec \A \cdot M_k \overline \y)$ for each $\vec\A\in\bigoplus_{j=0}^k D^{\otimes j}$ and $\y\in\Scal$ such that $\overline \y\in \Dcal_k$, where $p_{\vec\A}(\y):=(\vec\A\cdot\overline y)$.\end{proof}

To illustrate the notion of the bidual operator we consider again the one-dimensional Jacobi diffusion.

\begin{example}\label{ex2}
Consider again the setting of Example~\ref{ex1} and set again $\overline y:=(1,\y,\ldots,\y^k)$ for each $\y\in[0,1]$. An inspection of \eqref{eqn7} shows that
the $k$-th bidual operator $\M_k:\R^{k+1}\to\R^{k+1}$ corresponding to $L$ satisfies
$\M_k^i\overline \y=i(i-1)(\y^{i-1}-\y^i)$ for each $\y\in [0,1]$.
One thus gets  that
$\M_k^i\vec \y=i(i-1)(\y_{i-1}-\y_i)$ for $i\in\{0,\ldots,k\}$ and $\vec \y\in \R^{k+1}$.
As in the case of dual operators, the $k$-th bidual operator can be identified with the unique matrix $\widetilde G_k$ such that
$\M_k\vec\y=\widetilde G_k\vec y$ for all $\vec \y\in \R^{k+1}$. A direct computation shows then that $\widetilde G_k=G_k^\top$. This relation is nothing else than 
\eqref{eqn1}, which in this setting reads
$\vec\A^\top \widetilde G_k\vec \y= (\vec \A \cdot M_k \vec \y)= (L_k \vec \A \cdot \vec \y)
=\vec\A^\top G_k^\top\vec \y$
for all $\vec\A,\vec\y\in \R^{k+1}$.
\end{example}

\begin{remark}\label{rem33}
Suppose that the conditions of Remark~\ref{rem3} hold
for some $k$-th dual operator $L_k$, set $\Dcal_{k,j}:=\{\y_j\in \Scal^{\otimes j}\colon \vec \y\in \Dcal_k\}$, and consider the linear operator $\Mcal_j:\Dcal_{k,j}\to(\B^{\otimes j})^{**}$ uniquely defined by
$$\langle\A_j,\Mcal_j\y_j\rangle:=\langle \Lcal_j\A_j,\y_j\rangle,\qquad \A_j\in \Dcal(\Lcal_j),\ \y_j\in \Dcal_{k,j}.$$
By Lemma~\ref{lem1} we can then show that
$\M_k\vec \y=(\Mcal_0\y_0,\ldots, \Mcal_k\y_k)$
for each $\vec \y\in \Dcal_k$. 
Indeed, under the given conditions
$\langle \Lcal_j \A_j,\y^{\otimes j}\rangle=(L_j(\A_j\vec e_j)\cdot\overline y)=(\A_j \vec e_j \cdot M_j\overline y)$ for each $\y\in \B$.
By linearity we can conclude that the $j$-th component of  $M_j$ can depend on $\overline y$ just through $\y^{\otimes j}$, whence the above equality.

\end{remark}

\section{Polynomial processes on $\B $} \label{sec_ex_un}
In this section we  define  a $\B $-valued polynomial process, and derive two moment formulas.  We start by introducing a concept of measurability to which we implicitly always refer when speaking of $\B$-valued random variables and processes.

\begin{definition}\label{def3}Fix a filtered probability space $(\Omega, \Fcal,(\Fcal_t)_{t\geq0}, \P)$.
\begin{enumerate}
\item Fix $\Gcal\subseteq\Fcal$ and $k\in \N$. A map $\lambda:\Omega\to\B^{\otimes k}$ is $\Gcal$-weakly-measurable if $\langle \A,\lambda\rangle$ is $\Gcal$-measurable  for all $\A\in(\B^{\otimes k})^*$. For $\Gcal=\Fcal$ we  say that $\lambda$ is weakly-measurable.
\item
A $\B$-valued  adapted process $(\lambda_t)_{t\geq0}$ (or simply a $\B$-valued process) is a map defined on $\R_+\times \Omega$ with values in $\B$  such that $\lambda_t^{\otimes k}:\Omega\to\B^{\otimes k}$ is $\Fcal_t$-weakly-measurable for each $k\in \N$ and each $t\geq0$. 
\end{enumerate}
\end{definition}
In particular, for a $\B$-valued process  $(\lambda_t)_{t\geq0}$ one has that $(p(\lambda_t))_{t\geq0}$ is a real valued adapted process for all $p\in P$ and $t\geq0$.

 Let $\Scal\subseteq \B $, fix a linear subspace $D\subseteq \B^*$, and let $L: P^D \to P$ be a linear operator. 
An $\Scal$-valued process $(\lambda_t)_{t\geq0}$ defined on some filtered probability space $(\Omega, \Fcal,(\Fcal_t)_{t\geq0}, \P)$  is called a {\em solution to the martingale problem for $L$} with initial condition $\y_0 \in \Scal$ if 
\begin{enumerate}
\item $\lambda_0= \y_0$ $\P$-a.s., 
\item for every $p\in P^D$ there exists a  c\`adl\`ag version of $(p(\lambda_t))_{t \geq 0}$ and $(Lp(\lambda_t))_{t \geq 0}$  and
\item the process
\begin{equation}\label{IIeqnN}
N^p_t := p(\lambda_t) - p(\lambda_0) - \int_0^t Lp(\lambda_s) ds
\end{equation}
defines a local martingale for every $p\in P^D$. 
\end{enumerate}

Uniqueness of solutions to the martingale problem is always understood in the sense of law. The martingale problem for $L$ is {\em well--posed} if for every $\y\in \Scal$ there exists a unique $\Scal$-valued solution to the martingale problem for $L$ with initial condition~$\y_0$. 

\begin{definition}
Let $L$ be $\Scal$-polynomial. A solution to the martingale problem for $L$ is called  $\Scal$-valued  \emph{polynomial process}.
\end{definition}

\subsection{Dual moment formula }\label{sec_moments_uniqueness}
Our goal here is to  derive an analog of the moment formula in this general infinite dimensional setting.
To do this, it is crucial that the local martingales defined in \eqref{IIeqnN} 
are in fact true martingales. In the finite dimensional case (see \cite{CKT:12}), but also in the infinite dimensional case when dealing with compact state spaces  (see the probability measure case in \cite{CLS:18}), this is always true for \emph{all} polynomials in $P^D$, see Remark \ref{rem7} below.

Since in our setting this does necessarily hold true,  we need to include the true martingale property as an additional assumption (see condition in Theorem \ref{thm1}~\ref{itiinew} below). In  Section~\ref{sec33} we then illustrate some conditions under which this assumption is satisfied.  This is in particular the case if $D=B^*$.
Before stating the theorem, define 
\begin{equation}\label{eqn21}
p_{\vec\A}(y):=(\vec \A \cdot \overline{y})\qquad\text{and}\qquad Lp_{\vec\A}(y):=(L_k \vec \A \cdot \overline{y})
\end{equation}
for all $\vec\A\in\Dcal(L_k)$ and $k\in \N_0$. 
As in finite dimensions the moment formula corresponds to a solution of a system of linear ODEs. In the current infinite dimensional setting we need to make the solution concept precise.

\begin{definition}\label{def:sol}
Let $\mathcal{B}$ be a subset of $ \bigoplus_{j=0}^k (\B^{\otimes j})^{**}$.
We call  a function $t \mapsto \vec \A_t$ 
with values in $\mathcal{D}(L_k)$  a  $\mathcal{B}$-solution of the $k+1$ dimensional system of ODEs
\[
\partial_t \vec\A_t =  L_k \vec\A_t, \quad \vec\A_0=\vec\aaa,		
\]
if for every $t >0$ it holds
$
(\vec \A_t \cdot \vec {y}) = (\vec \aaa \cdot \vec{y})+\int_0^t  (L_k \vec a_s \cdot \vec{ y})ds
$
    for all $\vec\y \in \mathcal{B}$.
\end{definition}

This solution concept resembles at first sight weak solutions due to the pairing with $\vec\y\in \mathcal{B}$. We however require here that $\vec \A_t \in \mathcal{D}(L_k)$ which corresponds rather to a strong solution. The pairing with $\vec \y$ allows in particular to 
avoid Bochner integration.

We are now ready to state the main result of this section. To simplify the notation set $\overline \Scal_k:=\{\overline \y=(1,\y,\ldots,\y^{\otimes k})\colon \y\in \Scal\}$.

\begin{theorem}\label{thm1}
Let $L:P^D\to P$ be a polynomial operator, fix a $k$-th dual operator $L_k$, and assume that $L_k$ is closable with domain $\Dcal(L_k)$.
Let $(\lambda_t)_{t \geq 0} $ be an $\Scal$-valued  polynomial process corresponding to $L$,
and fix $\vec\aaa=(a_0, \ldots, a_k)\in\Dcal(L_k)$. Suppose that the following conditions hold true.
\begin{enumerate}
\item \label{iti}
There is a $\overline \Scal_k$-solution in the sense of Defintion \ref{def:sol}  of the $k+1$ dimensional system  of linear ODEs on $[0,T]$ given by
\begin{equation}\label{eqn12}
\partial_t \vec\A_t =  L_k \vec\A_t, \qquad 
\vec\A_0=\vec\aaa.					
\end{equation}
\item\label{itiinew} The process $(N_t^{p_{\vec\A_s}})_{t\in[0,T]}$ given by \eqref{IIeqnN} for $ p_{\vec\A_s}$ and $Lp_{\vec\A_s}$ as in \eqref{eqn21},
 defines a true martingale for each $s\in[0,T]$.
 \item\label{itii}
$\int_0^{T}\int_0^{T}\big|\E[
Lp_{\vec\A_{s}} (\lambda_{u})]\big|dsdu<\infty$.
\end{enumerate}
Then the following conditional moment formula holds true for all $0\leq t\leq T$.
\begin{equation*}
\mathbb{E}[   \aaa_0+ \langle \aaa_1, \lambda_T \rangle+\ldots+ \langle \aaa_k, \lambda_T^{\otimes k} \rangle \,|\, \mathcal{F}_t ]=\A_{T-t,0}+\langle \A_{T-t,1}, \lambda_t\rangle+\ldots+ \langle \A_{T-t,k}, \lambda^{\otimes k}_t\rangle,
\end{equation*}
i.e. in short hand notation
$
\mathbb{E}[ (\vec \A_0 \cdot \overline{\lambda}_T) \,|\, \mathcal{F}_t ]=( \vec \A_{T-t } \cdot \overline{\lambda_t}).
$
\end{theorem}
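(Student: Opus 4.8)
The plan is to fix $0\le t\le T$ and apply the martingale property of the processes $N^{p_{\vec\A_s}}$ for a well-chosen one-parameter family of polynomials derived from the ODE solution. First I would consider, for fixed $s\in[0,T]$, the polynomial $p_{\vec\A_s}$ with coefficients vector $\vec\A_s\in\Dcal(L_k)$ (the ODE solution evaluated at time $s$). By condition \ref{itiinew}, $(N^{p_{\vec\A_s}}_u)_{u\in[0,T]}$ is a true martingale, so taking conditional expectations in \eqref{IIeqnN} and using the tower property gives, for $t\le u\le T$,
\[
\E[p_{\vec\A_s}(\lambda_u)\mid\Fcal_t]=p_{\vec\A_s}(\lambda_t)+\int_t^u\E[Lp_{\vec\A_s}(\lambda_r)\mid\Fcal_t]\,dr,
\]
where the interchange of conditional expectation and the time integral is justified by condition \ref{itii} together with Fubini's theorem (and the fact that $Lp_{\vec\A_s}(\lambda_r)=(L_k\vec\A_s\cdot\overline{\lambda_r})$ is integrable).

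The heart of the argument is then to define $g(s):=\E[p_{\vec\A_{T-s}}(\lambda_{t+s})\mid\Fcal_t]$ for $s\in[0,T-t]$ and show that $g$ is constant, which yields $g(T-t)=g(0)$, i.e.\ $\E[p_{\vec\A_0}(\lambda_T)\mid\Fcal_t]=p_{\vec\A_{T-t}}(\lambda_t)$ — precisely the claimed moment formula (here $\vec\A_0=\vec\aaa$). To see $g$ is constant, I would compute the difference $g(s+h)-g(s)$ and split it into two contributions: one coming from the change in the polynomial's coefficients, $\vec\A_{T-s-h}-\vec\A_{T-s}$, and one coming from propagating the state from time $t+s$ to $t+s+h$. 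The first contribution, using the ODE in the $\overline\Scal_k$-solution sense of Definition \ref{def:sol} applied with $\vec\y=\overline{\lambda_{t+s}}\in\overline\Scal_k$ (valid since $\lambda_{t+s}\in\Scal$ a.s.), contributes $-\int_{s}^{s+h}\E[(L_k\vec\A_{T-r}\cdot\overline{\lambda_{t+s}})\mid\Fcal_t]\,dr = -\int_s^{s+h}\E[Lp_{\vec\A_{T-r}}(\lambda_{t+s})\mid\Fcal_t]\,dr$. The second contribution, by the martingale identity displayed above applied to $p_{\vec\A_{T-s-h}}$ on the interval $[t+s,t+s+h]$, contributes $+\int_{s}^{s+h}\E[Lp_{\vec\A_{T-s-h}}(\lambda_{t+r})\mid\Fcal_t]\,dr$ (after a change of variables). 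Adding and taking $h\to0$, the derivative $g'(s)$ equals $\E[Lp_{\vec\A_{T-s}}(\lambda_{t+s})\mid\Fcal_t]-\E[Lp_{\vec\A_{T-s}}(\lambda_{t+s})\mid\Fcal_t]=0$; alternatively, one shows directly that $g(s+h)-g(s)=o(h)$ uniformly, so $g$ is absolutely continuous with a.e.-zero derivative, hence constant.

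The main obstacle I anticipate is making the ``differentiation in $s$'' rigorous in the infinite-dimensional, merely-weak ODE setting: the ODE solution $t\mapsto\vec\A_t$ is only characterized through the pairing with elements of $\overline\Scal_k$, so one cannot naively differentiate $p_{\vec\A_s}(\lambda_{t+s})$ pathwise in $s$. The clean way around this is to keep everything in integrated (Fubini) form and never actually differentiate: combine the two integral representations above to get a single identity
\[
g(s+h)-g(s)=\E\!\left[\int_s^{s+h}\!\big(Lp_{\vec\A_{T-s-h}}(\lambda_{t+r})-(L_k\vec\A_{T-r}\cdot\overline{\lambda_{t+s}})\big)dr\,\Big|\,\Fcal_t\right],
\]
and argue — using continuity of $r\mapsto\vec\A_r$ in the relevant pairings, the c\`adl\`ag property of $(\lambda_u)$, and the uniform integrability furnished by conditions \ref{itiinew}--\ref{itii} — that the right-hand side is $o(h)$, so that $g$ is constant on $[0,T-t]$. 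A secondary technical point is to check carefully that all the appearing quantities $Lp_{\vec\A_r}(\lambda_u)=(L_k\vec\A_r\cdot\overline{\lambda_u})$ are genuinely integrable and that the various applications of Fubini are licensed, which is exactly what condition \ref{itii} is designed to guarantee; the closability of $L_k$ is used to ensure $L_k\vec\A_r$ makes sense along the solution and that $(L_k\vec\A_r\cdot\vec\y)$ depends continuously on $\vec\A_r$ in the pairing.
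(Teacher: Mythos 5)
Your approach is essentially the same as the paper's, which follows Theorem~4.4.11 in \cite{EK:05}: introduce a function of two time variables (there, $f(s,u)=\E[(\vec\A_s\cdot\overline{\lambda}_{t+u})1_A]$ for $A\in\Fcal_t$), derive the integral identities for increments in each variable from the ODE (condition~\ref{iti}) and the martingale property (condition~\ref{itiinew}), respectively, and then conclude $f(T-t,0)=f(0,T-t)$ via the integrability supplied by condition~\ref{itii}. Where you work directly with the anti-diagonal $g(s)$ and reprove its constancy by hand, the paper simply invokes Lemma~4.4.10 in \cite{EK:05}, which is precisely the packaged version of the ``two Fubini decompositions plus integrability, no pathwise differentiation'' device you describe as the way around the differentiability obstacle; the paper also sidesteps the measurability subtleties of pathwise conditional expectations by pairing against $1_A$ with $A\in\Fcal_t$ rather than working with $\E[\,\cdot\mid\Fcal_t]$ directly.

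One concrete slip: with your definition $g(s):=\E[p_{\vec\A_{T-s}}(\lambda_{t+s})\mid\Fcal_t]$ on $[0,T-t]$ you obtain $g(T-t)=\E[p_{\vec\A_{t}}(\lambda_T)\mid\Fcal_t]$ and $g(0)=p_{\vec\A_T}(\lambda_t)$, which is not the claimed formula unless $t=0$. The correct anti-diagonal is $g(s):=\E[p_{\vec\A_{T-t-s}}(\lambda_{t+s})\mid\Fcal_t]$, giving $g(T-t)=\E[p_{\vec\A_0}(\lambda_T)\mid\Fcal_t]$ and $g(0)=p_{\vec\A_{T-t}}(\lambda_t)$ as required; the subsequent increment computation should use $\vec\A_{T-t-s-h}$ and $\vec\A_{T-t-r}$ accordingly. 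With that indexing fixed, your decomposition into the ``coefficient change'' (via the $\overline\Scal_k$-solution property of Definition~\ref{def:sol} paired with $\overline{\lambda_{t+s}}$) and the ``state change'' (via the true martingale property and the tower rule) matches the paper's two increment identities for $f$ exactly.
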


\begin{proof}
We will follow the proof of Theorem 4.4.11 in \cite{EK:05} in order to obtain a slightly more general result (compare also with \cite{CLS:18}).

Fix $T\in \R_+$, $t\in [0,T]$ and $A\in \Fcal_t$. 
For all $(s,u)\in[0,T-t]\times [0,T-t]$ define 
$$f(s, u) :=\E[(\vec\A_{s}\fdot \overline{\lambda}_{t+u})1_A].$$
Fix $u\in[0,T-t]$ and note that equation \eqref{eqn12} yields
\begin{align*}
f(\overline s, u) -f(\underline s, u)
=\E[\left(
  (\vec\A_{\overline s}\cdot \overline{\lambda}_{t+u} )-( \vec\A_{\underline s}\cdot {\overline{\lambda}_{t+u}} ) \right)  1_A]
  =\int_{\underline s}^{\overline s} \E[
 (   L_k\vec\A_s\cdot {\overline{\lambda}_{t+u}} ) 1_A]ds,
\end{align*}
for all $\overline s, \underline s,\in[0,T-t]$.
Fix then $s\in[0,T-t]$ and note that condition~\ref{itiinew} yields
$$
f(s, \overline s) -f(s, \underline s)
=\E[\E[
  ( \vec\A_{s}\cdot {\overline{\lambda}_{t+\overline s}})  -   (\vec\A_{s}\cdot {\overline{\lambda}_{t+\underline s}})  
|\Fcal_t] 1_A]=
\int_{\underline s}^{\overline s} \E[
(    L_k\vec\A_{s}\cdot \overline{\lambda}_{t+u}) 1_A]du.
$$
Since $\int_0^{T-t}\int_0^{T-t}\big|\E[
( L_k\vec\A_{s}\fdot { \overline{\lambda}_{t+u}})]\big|dsdu<\infty$ by condition~\ref{itii}, Lemma 4.4.10 in \cite{EK:05} then yields 
\begin{align*}
 \E[(\vec\A_{T-t}\fdot \overline{\lambda}_{t})1_A]&-\E[(\vec\A_{0}\fdot \overline{\lambda}_{T})1_A]
 =f(T-t,0)-f(0,T-t)\\
  &=\int_{0}^{T-t}  \E[(   L_k\vec\A_{s}\cdot { \overline{\lambda}_{T-t -s}} ) 1_A]
  -\E[
(    L_k\vec\A_{s}\cdot  \overline{\lambda}_{T-t-s})1_A]ds
=0
\end{align*}
 and the result follows.
\end{proof}

\begin{example}\label{ex3}
Consider again the setting of Example~\ref{ex2} and  let $(\lambda_t)_{t \geq 0} $  be a \emph{Jacobi diffusion with vanishing drift}, i.e.~a continuous $\R$-valued polynomial process corresponding to $L$, and fix $\vec\aaa\in\R^{k+1}$.
We illustrate now how Theorem~\ref{thm1} can be applied in this setting.
Observe that $L_k$ is well defined on $\Dcal(L_k)=\R^{k+1}$ by definition. 
Observe that the system of linear ODEs given in \ref{iti} of Theorem~\ref{thm1} is given by
$$
\partial_t \vec\A_t =  G_k\vec \A_t, \quad 
\vec\A_0=\vec\aaa					
$$
and it is solved by $\vec\A_t=e^{tG_k}\vec\aaa$, which lies in $\R^{k+1}$. 
As we will see in Remark~\ref{rem7} and Example~\ref{ex11}, Condition~\ref{itiinew} of Theorem~\ref{thm1} is always satisfied in the finite dimensional case. Since $L_k$ is a bounded operator, continuity of  $(\vec \A_t)_{t\geq0}$ is enough to guarantee Condition~\ref{itii} of Theorem~\ref{thm1}. 
We can thus conclude that
\begin{align*}
\mathbb{E}[\aaa_0+  \aaa_1\lambda_T +\ldots+  \aaa_k \lambda_T^k \,|\, \mathcal{F}_t ]&=\A_{T-t,0}+\ \A_{T-t,1}\lambda_t+\ldots+  \A_{T-t,k} \lambda^{k}_t\\
&=(1,\lambda_t,\ldots,\lambda_t^k)^\top e^{(T-t)G_k}\vec\aaa.
\end{align*}

\end{example}

\subsection{Bidual moment formula}

Let us now pass to the bidual moment formula, which involves the bidual operator $M_k$.
Before stating the result we need to introduce a notion of integration for $\B^{\otimes k}$-valued maps. One possibility is to use the notion of the Dunford integral (see, e.g.~\cite{R:13}).

\begin{definition}
 Let $\lambda:\Omega\to\B^{\otimes k}$ be weakly-measurable in the sense of Definition~\ref{def3}.
 We say that $\lambda$ is Dunford integrable if
$$
\E[|\langle \A,\lambda\rangle|]<\infty\qquad\text{and}\qquad\E[\langle \A,\lambda\rangle]=\langle \A,m\rangle
$$
for some $m\in (\B^{\otimes k})^{**}$ and for all $\A\in (\B^{\otimes k})^*$. In this case we write $\E[\lambda]=m$. 
\end{definition}

Observe that the identity $\E[\langle \A_k,\lambda\rangle]=\langle \A_k,m\rangle$ for all $\A_k\in  (\B^{\otimes k})^*$ follows directly from the definition.

For the bidual moment formula, we shall need the following weak solution concept.

\begin{definition}\label{def:solweak}
Fix a $k$-th dual operator $L_k$ and a $k$-th bidual operator $\M_k$.
Let $\mathcal{H} \subseteq \mathcal{D}(L_k)$. 
We call a function $t \mapsto \vec m_t$ with values in $\bigoplus_{j=0}^k (\B^{\otimes j})^{**}$  a ${\mathcal{H}}$-weak  solution of the $k+1$ dimensional system of ODEs
\[
\partial_t \vec m_t =  M_k \vec m_t, \quad \vec m_0=\vec m,	
\]
if for every $t >0$ and $a \in \mathcal{H}$ it holds
$
(\vec a \cdot \vec m_t) = (\vec a \cdot \vec m)+\int_0^t  (L_k \vec a \cdot \vec m_s)ds.
$
\end{definition}

Note that, in contrast to Definition \ref{def:sol}, we here deal with a truly weak solution concept since the adjoint operator $L_k$ is involved.

Fix now a dual operator $L_k$ as in Theorem \ref{dual2} below and set - 
using the notation of \eqref{IIeqnN} and \eqref{eqn21} -
$$\mathcal{E}:=\{\vec\A\in\Dcal(L_k)\colon N^{p_{\vec\A}}\text{ is a true martingale}\}.$$
To ease notation, we here do not indicate the dependence on $L_k$.

\begin{theorem}\label{dual2}
Let $L:P^D\to P$ be a polynomial operator, fix a $k$-th dual operator $L_k$and a $k$-th bidual operator $\M_k$, and assume that $L_k$ is closable with domain $\mathcal{D}(L_k)$. Let $(\lambda_t)_{t \geq 0} $ be an $\Scal$-valued polynomial process corresponding to $L$. Suppose that \begin{equation}\label{iti2} 
\text{$\lambda_t^{\otimes j}$ is Dunford integrable for all $j\in\{0,\ldots, k\}$ and $t >0$,}
\end{equation}
and set
$\vec m_{t}:=(1,\E[\lambda_t],\ldots,\E[\lambda_t^{\otimes k}]).$
Then
$(\vec m_t)_{t\geq0}$
is a $\Ecal$-weak solution of 
the $k+1$ dimensional system of linear ODEs given by
\begin{align}\label{momentODE}
\partial_t  \vec m_t =  \M_k\vec m_t, \qquad 
\vec m_0=(1,\lambda_0,\ldots,\lambda_0^{\otimes k}).					
\end{align}
\end{theorem}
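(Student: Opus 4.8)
The plan is to reduce the bidual formula to the dual moment formula (Theorem~\ref{thm1}) via the adjointness relation of Lemma~\ref{lem1}, thereby avoiding a separate analysis of the (possibly badly-behaved) bidual operator $\M_k$. First I would fix $t > 0$ and $\vec\A \in \Ecal$. The goal is to verify the defining identity of an $\Ecal$-weak solution, namely
\[
(\vec\A \cdot \vec m_t) = (\vec\A \cdot \vec m_0) + \int_0^t (L_k \vec\A \cdot \vec m_s)\, ds.
\]
By Dunford integrability \eqref{iti2} and the definition of $\vec m_s = (1, \E[\lambda_s], \ldots, \E[\lambda_s^{\otimes k}])$, the left-hand side equals $\E[(\vec\A \cdot \overline\lambda_t)] = \E[p_{\vec\A}(\lambda_t)]$, where $p_{\vec\A}(y) = (\vec\A \cdot \overline y)$; similarly $(\vec\A \cdot \vec m_0) = p_{\vec\A}(\lambda_0)$ (using $\lambda_0 = y_0$ deterministic), and $(L_k\vec\A \cdot \vec m_s) = \E[(L_k\vec\A \cdot \overline\lambda_s)] = \E[Lp_{\vec\A}(\lambda_s)]$, again using $\vec\A \in \Dcal(L_k)$ together with Dunford integrability to pull the pairing inside the expectation. (Here one uses that $L_k\vec\A \in \bigoplus_{j=0}^k(\B^{\otimes j})^*$ so it pairs with the Dunford means $\E[\lambda_s^{\otimes j}] \in (\B^{\otimes j})^{**}$.)

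With these identifications the claimed identity becomes exactly
\[
\E[p_{\vec\A}(\lambda_t)] = p_{\vec\A}(\lambda_0) + \int_0^t \E[Lp_{\vec\A}(\lambda_s)]\, ds,
\]
which is precisely the statement that $\E[N_t^{p_{\vec\A}}] = 0$ for the process $N^{p_{\vec\A}}$ of \eqref{IIeqnN}. But this holds by the very definition of $\Ecal$: for $\vec\A \in \Ecal$ the process $N^{p_{\vec\A}}$ is a true martingale, hence has constant (zero) expectation, and Fubini's theorem — justified by the integrability built into Dunford integrability of $\lambda_s^{\otimes j}$ plus, if needed, an elementary bound on $\int_0^t \E[|Lp_{\vec\A}(\lambda_s)|]ds$ of the type appearing in condition~\ref{itii} of Theorem~\ref{thm1} — lets us interchange expectation and the time integral. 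The role of the bidual operator enters only cosmetically: by Lemma~\ref{lem1}, $(L_k\vec\A \cdot \vec m_s) = (\vec\A \cdot \M_k \vec m_s)$ whenever $\vec m_s \in \Dcal_k$, so the weak ODE $\partial_t \vec m_t = \M_k \vec m_t$ in the sense of Definition~\ref{def:solweak} is literally the same family of scalar relations we have just established. One should remark that membership $\vec m_s \in \Dcal_k$ is not actually needed for the conclusion as phrased, since Definition~\ref{def:solweak} only tests against $\vec a \in \Ecal \subseteq \Dcal(L_k)$ and is written entirely in terms of $L_k$; the connection to $\M_k$ is the content of Lemma~\ref{lem1}.

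\textbf{Main obstacle.} The only genuinely delicate point is the Fubini/interchange step: one must ensure $\int_0^t \E[|Lp_{\vec\A}(\lambda_s)|]\, ds < \infty$ so that $\E\big[\int_0^t Lp_{\vec\A}(\lambda_s)\,ds\big] = \int_0^t \E[Lp_{\vec\A}(\lambda_s)]\,ds$. This is where the true-martingale hypothesis does real work — a priori $N^{p_{\vec\A}}$ is only a local martingale, and even as a true martingale one needs the integrability of the integrand process $Lp_{\vec\A}(\lambda_s)$ to split $\E[N_t^{p_{\vec\A}}]$ into the three terms. I would handle this exactly as in the proof of Theorem~\ref{thm1}: the true-martingale property of $N^{p_{\vec\A}}$ together with $N_0^{p_{\vec\A}} = 0$ gives $\E[p_{\vec\A}(\lambda_t)] = \E\big[p_{\vec\A}(\lambda_0) + \int_0^t Lp_{\vec\A}(\lambda_s)\,ds\big]$, and then finiteness of $\int_0^t\E[|Lp_{\vec\A}(\lambda_s)|]ds$ (which can be extracted from the true-martingale property, since $\E|N_t^{p_{\vec\A}}| < \infty$ forces $\E\big[\int_0^t|Lp_{\vec\A}(\lambda_s)|ds\big] < \infty$ via $\int_0^t Lp_{\vec\A}(\lambda_s)ds = N_t^{p_{\vec\A}} - p_{\vec\A}(\lambda_t) + p_{\vec\A}(\lambda_0)$ and integrability of each piece) licenses Tonelli/Fubini. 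Everything else is bookkeeping with the pairings $(\ \fdot\ )$ and the Dunford means.
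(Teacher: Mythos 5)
Your proof is correct and takes essentially the same route as the paper: fix $\vec\A\in\Ecal$, use that $N^{p_{\vec\A}}$ is a true martingale to get $\E[p_{\vec\A}(\lambda_t)]-p_{\vec\A}(\lambda_0)-\int_0^t\E[Lp_{\vec\A}(\lambda_s)]\,ds=0$, and then translate each term into a pairing with $\vec m_s$ via Dunford integrability, noting (as you do) that the weak-solution concept of Definition~\ref{def:solweak} is phrased entirely through $L_k$ so $\M_k$ enters only via Lemma~\ref{lem1}. One small caveat in your ``Main obstacle'' paragraph: $\E|N_t^{p_{\vec\A}}|<\infty$ yields $\E\big|\int_0^t Lp_{\vec\A}(\lambda_s)\,ds\big|<\infty$, which does not by itself imply $\E\big[\int_0^t|Lp_{\vec\A}(\lambda_s)|\,ds\big]<\infty$; the paper states the identity directly without addressing this Fubini interchange, so the comparison is unaffected.
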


\begin{remark}
Condition \eqref{iti2} explicitly reads as,
$$
\sup\{\E[\langle \A_j,\lambda_t^{\otimes j}\rangle]\ :\ \A_j\in (\B^{\otimes j})^*,\ \|\A_j\|_{*j}\leq1\}<\infty,\qquad j\in\{0,\ldots, k\}.
$$
 Since for $k$ even, the map $\y^{\otimes k}\mapsto\|\y\|^k_\times$ can be extended to an element of $(\B^{\otimes k})^*$, this condition is equivalent to
$\E[\|\lambda_t\|^k]<\infty$. It is thus automatically satisfied if condition \eqref{eqn16} below is in force.

\end{remark}

\begin{proof}
Fix $\vec\A\in\mathcal{E}$ and set $p_{\vec\A}$ and $Lp_{\vec\A}$ as in \eqref{eqn21}. Recall that by definition of $\mathcal{E}$ we have that $N^{p_{\vec\A}}$  is a true martingale and thus
\begin{equation}\label{eqn10}
\E[p_{\vec\A}(\lambda_t)] - p_{\vec\A}(\lambda_0) - \int_0^t \E[Lp_{\vec\A}(\lambda_s)] ds =0.
\end{equation}
Recall that $\E[(\vec \A \cdot \overline{\lambda_t})]=(\vec \A \cdot \vec {m_t})$ and 
$(\vec \A \cdot \overline{\lambda_0})=(\vec \A \cdot \vec m_0)$. 
Moreover, by the defintion of $L_k$ we have
$
\E[Lp_{\vec\A}(\lambda_s)] =\E[(L_k\vec\A \cdot \overline {\lambda}_s)]=(L_k\vec\A \cdot \vec {m}_s). 
$
  Plugging those terms in \eqref{eqn10} yields
$
(\vec a \cdot \vec m_t) = (\vec a \cdot \vec m_0)+\int_0^t  (L_k \vec a \cdot \vec m_s)ds
$
and thus the assertion.
\end{proof}

\begin{remark}\label{rem:motivation}
We are now in the position to explain why we decided to work with the potentially very large bidual space $(B^{\otimes j})^{**}$ instead of the space $B^{\otimes j}$ itself. 
As we will see in the applications (see for instance Section~\ref{sec43}), typically the solution $(\vec m_t)_{t\geq0}$ of \eqref{momentODE}  does not   belong to $B^{\otimes j}$. A possible alternative choice would be to work with the closure $\overline{B^{\otimes j}}$ of $B^{\otimes j}$ with respect to some cross norm. This would however have two main disadvantages: first, elements of $\overline{B^{\otimes j}}\setminus B^{\otimes j}$ are just abstractly defined and checking if some $\y_j\in \overline{B^{\otimes j}}$ is typically quite involved. Second, every element of $\overline{B^{\otimes j}}$  corresponds to an element of  $({B^{\otimes j}})^{**}$, which implies that requiring that $\y_j\in ({B^{\otimes j}})^{**}$ is less restrictive than requiring that $\y_j\in\overline{B^{\otimes j}}$.
\end{remark}

Observe that Theorem~\ref{dual2} does not guarantee that a solution of the given system of ODEs coincides with the deterministic process $(1,\E[\lambda_t],\ldots,\E[\lambda_t^{\otimes k}])$. 
Indeed, this result can fail if the solution of \eqref{momentODE} is not unique.
Let us here state the precise notion of uniqueness that is needed.

\begin{corollary}\phantomsection\label{cor11}
Suppose that given two $\Ecal$-weak solutions $(\vec m_t^1)_{t \geq 0}$,  $(\vec m_t^2)_{t \geq 0}$ of \eqref{momentODE} we have $\vec m_t^1= \vec m_t^2$ for all $t \geq 0$. Then 
$
\vec m_t^1=\vec m_t^2= (1,\E[\lambda_t],\ldots,\E[\lambda_t^{\otimes k}]).
$

Fix $\Hcal  \subseteq \Ecal$.
Suppose that  given two $\Ecal$-weak solutions $(\vec m_t^1)_{t \geq 0}$,  $(\vec m_t^2)_{t \geq 0}$ of \eqref{momentODE} we have $(\vec a  \cdot \vec m_t^1)= (\vec a  \cdot \vec m_t^2)$ for all $\vec a \in \Hcal$ and  $t \geq 0$. Then 
$
(\vec a \cdot \vec m^1_t)=(\vec a \cdot \vec m^2_t)=\E[(\vec a \cdot \overline{\lambda}_t)],
$
for each $\vec a \in \Hcal$.
\end{corollary}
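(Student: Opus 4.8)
The plan is to deduce Corollary~\ref{cor11} directly from Theorem~\ref{dual2} together with the stipulated uniqueness hypothesis. First I would invoke Theorem~\ref{dual2}: under condition \eqref{iti2}, the vector $\vec m_t := (1,\E[\lambda_t],\ldots,\E[\lambda_t^{\otimes k}])$ is itself an $\Ecal$-weak solution of \eqref{momentODE}. For the first assertion, suppose $(\vec m_t^1)$ and $(\vec m_t^2)$ are two $\Ecal$-weak solutions that are assumed to always coincide. Since $(\vec m_t)$ is also such a solution, we may apply the hypothesis to the pair $(\vec m_t^1, \vec m_t)$ to get $\vec m_t^1 = \vec m_t$ for all $t\geq 0$, and likewise $\vec m_t^2 = \vec m_t$, which gives $\vec m_t^1 = \vec m_t^2 = (1,\E[\lambda_t],\ldots,\E[\lambda_t^{\otimes k}])$.

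For the second assertion the argument is identical but carried out after pairing with $\vec a \in \Hcal$: the hypothesis now reads that any two $\Ecal$-weak solutions agree under the pairing $(\vec a \cdot \,\fdot\,)$ for all $\vec a \in \Hcal$. Applying this to $(\vec m_t^1, \vec m_t)$ and to $(\vec m_t^2, \vec m_t)$, where $\vec m_t$ is the moment solution provided by Theorem~\ref{dual2}, yields $(\vec a \cdot \vec m_t^1) = (\vec a \cdot \vec m_t) = (\vec a \cdot \vec m_t^2)$. It then only remains to note that, by the definition of the Dunford integral and of the pairing $(\,\fdot\,)$ in \eqref{eq:dotrel}, one has $(\vec a \cdot \vec m_t) = (\vec a \cdot (1,\E[\lambda_t],\ldots,\E[\lambda_t^{\otimes k}])) = \E[(\vec a \cdot \overline{\lambda}_t)]$, giving the claimed chain of equalities.

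There is essentially no obstacle here: the corollary is a formal consequence of Theorem~\ref{dual2} and the tautological observation that a hypothesis asserting ``any two $\Ecal$-weak solutions coincide'' applies in particular when one of the two solutions is the moment vector. The only point requiring a word of care is the measurability/integrability bookkeeping needed to make sense of $\E[(\vec a \cdot \overline{\lambda}_t)]$ and to exchange expectation with the finite linear combination defining the pairing — but this is exactly what condition \eqref{iti2} (Dunford integrability of each $\lambda_t^{\otimes j}$) secures, together with the linearity of expectation over the finitely many summands in \eqref{eq:dotrel}. Hence the proof is short.

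\begin{proof}
By condition \eqref{iti2} we may apply Theorem~\ref{dual2}, which shows that $\vec m_t:=(1,\E[\lambda_t],\ldots,\E[\lambda_t^{\otimes k}])$ is an $\Ecal$-weak solution of \eqref{momentODE}. Moreover, by the definition of the Dunford integral and of the pairing \eqref{eq:dotrel}, for every $\vec a\in\Dcal(L_k)$ and $t\geq 0$ we have $(\vec a\cdot\vec m_t)=\E[(\vec a\cdot\overline\lambda_t)]$.

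For the first statement, let $(\vec m_t^1)_{t\geq0}$ and $(\vec m_t^2)_{t\geq0}$ be two $\Ecal$-weak solutions of \eqref{momentODE}. Applying the assumed uniqueness to the pair $(\vec m_t^1,\vec m_t)$ gives $\vec m_t^1=\vec m_t$ for all $t\geq0$, and applying it to $(\vec m_t^2,\vec m_t)$ gives $\vec m_t^2=\vec m_t$. Hence $\vec m_t^1=\vec m_t^2=(1,\E[\lambda_t],\ldots,\E[\lambda_t^{\otimes k}])$.

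For the second statement, let $\Hcal\subseteq\Ecal$ and let $(\vec m_t^1)_{t\geq0}$, $(\vec m_t^2)_{t\geq0}$ be two $\Ecal$-weak solutions of \eqref{momentODE}. Applying the assumed uniqueness to the pairs $(\vec m_t^1,\vec m_t)$ and $(\vec m_t^2,\vec m_t)$ we obtain, for every $\vec a\in\Hcal$ and $t\geq0$,
\[
(\vec a\cdot\vec m_t^1)=(\vec a\cdot\vec m_t)=(\vec a\cdot\vec m_t^2).
\]
Since $(\vec a\cdot\vec m_t)=\E[(\vec a\cdot\overline\lambda_t)]$, this yields $(\vec a\cdot\vec m_t^1)=(\vec a\cdot\vec m_t^2)=\E[(\vec a\cdot\overline\lambda_t)]$ for each $\vec a\in\Hcal$.
\end{proof}
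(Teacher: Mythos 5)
Your proof is correct and follows the route the paper clearly intends (no proof is even written out in the text, as the assertion is a direct consequence of Theorem~\ref{dual2}). The key observation — that $\vec m_t := (1,\E[\lambda_t],\ldots,\E[\lambda_t^{\otimes k}])$ is itself an $\Ecal$-weak solution by Theorem~\ref{dual2} under the standing Dunford integrability condition~\eqref{iti2}, and that the assumed uniqueness (or uniqueness after pairing with $\Hcal$) then identifies any other $\Ecal$-weak solution with it — is exactly what makes the corollary ``a corollary.'' Your additional remark that $(\vec a\cdot\vec m_t)=\E[(\vec a\cdot\overline\lambda_t)]$ by linearity of expectation over the finitely many summands in \eqref{eq:dotrel} together with the defining property of the Dunford integral is the correct and necessary bookkeeping step.
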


\begin{remark}
Suppose that the martingale problem for $L$ is well-posed implying that the corresponding semigroup $(P_t)_{t \geq 0}$ is well-defined on $P^D$. Assume that it can be uniquely extended to $P$. Then $P_tp_{\vec a}(\lambda)=E[p_{\vec a}(\lambda_t)]$ uniquely solves the abstract Cauchy problem given by
\begin{equation}\label{eq:Cauchy}
\begin{aligned}
\partial_t u(t,\lambda)&= \overline L u(t, \lambda),\quad t \geq 0\\
u(0,\lambda)&=p_{\vec a}(\lambda)= (\vec a \cdot \overline{\lambda}),
\end{aligned}
\end{equation}
where $\overline L$ denotes the extension of $L$ as generator of $(P_t)_{t \geq 0}$.
Let now $\vec m_t$ be a $\Ecal$-weak solution of \eqref{momentODE} with $\vec m_0=\overline \lambda$. Then $\Ecal$-weak uniqueness holds, as
 $(\vec a \cdot \vec m_t)$ solves \eqref{eq:Cauchy} and by uniqueness of the solution to the Cauchy problem this has to be equal to  $P_tp_{\vec a}(\lambda)$.
\end{remark}

The next corollary provides other sufficient conditions under which any solution $(m_t)_{t \geq 0}$  of \eqref{momentODE} satisfies
$
\mathbb{E}[ (\vec \A \cdot \overline{\lambda}_T)]= (\vec \aaa \cdot \vec m_T)$ for $ \vec \A \in \mathcal{D}(L_k).$
Recall the notation $\overline \Scal_k:=\{\overline \y=(1,\y,\ldots,\y^{\otimes k})\colon \y\in \Scal\}$.
\begin{corollary}\label{cor1}
Let $L:P^D\to P$ be a polynomial operator, fix a closable $k$-th dual operator $L_k$ with domain $\Dcal(L_k)$ and a $k$-th bidual operator $\M_k$. 
Let $(\lambda_t)_{t \geq 0} $ be an $\Scal$-valued polynomial process corresponding to $L$ and fix $\vec\aaa\in\Dcal(L_k)$. 
Suppose that the following conditions hold.
\begin{enumerate}
\item\label{iti3} Let $(\vec\A_t)_{t\geq0}$ be an $\overline \Scal_k$-solution of \eqref{eqn12} such that the conditions 
of Theorem~\ref{thm1} are satisfied and denote by $\Rcal$ the  set $\Rcal:=\{\vec\A_t\colon t\geq 0\}$.

\item\label{itii3} 
There is a $\Rcal$-weak solution in the sense of Definition \ref{def:solweak} of
the $k+1$ dimensional system of linear ODEs given by \eqref{momentODE}.
\item\label{itii3b} $(\vec\A_t)_{t\geq0}$ can be  paired with $(\vec m_s)_{s \geq 0}$, that is
$(\vec\A_t)_{t\geq0}$ satisfies additionally for all $t,s \geq 0$
\[
(\vec \A_t \cdot \vec m_s) = (\vec \aaa \cdot \vec m_s)+\int_0^t  (L_k \vec a_u \cdot \vec m_s)du.
\]
\item\label{itiii3} $ \int_0^T\int_0^T |(L_k \vec \A_s \cdot \vec m_t)| ds dt< \infty$.
\end{enumerate}
Then $\mathbb{E}[ (\vec \aaa \cdot \overline{\lambda}_T)]= (\vec \aaa \cdot \vec m_T)$ holds for all $T\geq0$.

\end{corollary}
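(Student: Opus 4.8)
The plan is to reduce the statement to the purely deterministic identity $(\vec\A_T\cdot\vec m_0)=(\vec\aaa\cdot\vec m_T)$ and to establish the latter by a duality (``Fubini'') argument pairing the backward solution $(\vec\A_t)_{t\ge0}$ of \eqref{eqn12} with the forward solution $(\vec m_t)_{t\ge0}$ of \eqref{momentODE}, in the spirit of the proof of Theorem~\ref{thm1}. First I would invoke Theorem~\ref{thm1}: by hypothesis~\ref{iti3} the curve $(\vec\A_t)_{t\ge0}$ is an $\overline\Scal_k$-solution of \eqref{eqn12} with $\vec\A_0=\vec\aaa$ satisfying all assumptions of that theorem, so for all $0\le t\le T$
\[
\E\big[(\vec\A_0\cdot\overline\lambda_T)\mid\Fcal_t\big]=(\vec\A_{T-t}\cdot\overline{\lambda_t}).
\]
Taking $t=0$ and then expectations, and using that $\lambda_0=\y_0$ is deterministic so that $\overline\lambda_0=(1,\lambda_0,\dots,\lambda_0^{\otimes k})=\vec m_0$ (the initial value of \eqref{momentODE}), this gives $\E[(\vec\aaa\cdot\overline\lambda_T)]=(\vec\A_T\cdot\vec m_0)$. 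Hence it suffices to prove $(\vec\A_T\cdot\vec m_0)=(\vec\aaa\cdot\vec m_T)$.

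For this I would fix $T\ge0$, set $g(s,t):=(\vec\A_s\cdot\vec m_t)$ on $[0,T]^2$, and compute its two difference-quotient identities. Condition~\ref{itii3b} is precisely the statement that, for each fixed $t$, the map $s\mapsto g(s,t)$ is absolutely continuous with $g(\overline s,t)-g(\underline s,t)=\int_{\underline s}^{\overline s}(L_k\vec\A_u\cdot\vec m_t)\,du$. On the other hand, since $\vec\A_s\in\Rcal$ for every $s$, condition~\ref{itii3}, applied (in the sense of Definition~\ref{def:solweak}) with test element $\vec\A_s$, shows that for each fixed $s$ the map $t\mapsto g(s,t)$ is absolutely continuous with $g(s,\overline t)-g(s,\underline t)=\int_{\underline t}^{\overline t}(L_k\vec\A_s\cdot\vec m_r)\,dr$. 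Thus both ``partial derivatives'' of $g$ equal the same integrand $(s,t)\mapsto(L_k\vec\A_s\cdot\vec m_t)$ — which, by Lemma~\ref{lem1}, may equivalently be written $(\vec\A_s\cdot M_k\vec m_t)$ — and this integrand is summable on $[0,T]^2$ by condition~\ref{itiii3}. I would then apply Lemma~4.4.10 in \cite{EK:05}, exactly as in the proof of Theorem~\ref{thm1}, to evaluate $g$ along the anti-diagonal $r\mapsto g(r,T-r)$, obtaining
\[
(\vec\A_T\cdot\vec m_0)-(\vec\aaa\cdot\vec m_T)=g(T,0)-g(0,T)=\int_0^T\big[(L_k\vec\A_s\cdot\vec m_{T-s})-(L_k\vec\A_s\cdot\vec m_{T-s})\big]\,ds=0 .
\]
Combining this with the first paragraph yields $\E[(\vec\aaa\cdot\overline\lambda_T)]=(\vec\aaa\cdot\vec m_T)$, as claimed.

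The only genuine work, and the step I expect to be the main obstacle, is making the application of Lemma~4.4.10 in \cite{EK:05} legitimate: one has to verify that $(s,t)\mapsto g(s,t)$ is jointly measurable and that the two one-variable absolute-continuity identities above really hold with the \emph{same} integrand $(L_k\vec\A_s\cdot\vec m_t)$. This is exactly where conditions~\ref{itii3b} and~\ref{itii3} (rather than merely ``$\vec\A$ solves \eqref{eqn12}'' and ``$\vec m$ solves \eqref{momentODE}'') are indispensable: the $\overline\Scal_k$-solution concept of Definition~\ref{def:sol} tests $\vec\A_t$ only against $\overline\Scal_k$, and the $\Rcal$-weak solution concept of Definition~\ref{def:solweak} tests $\vec m_t$ only against $\Rcal$, so a priori one can pair neither $\vec\A_t$ with $\vec m_s$ nor $\vec m_t$ with $\vec\A_s$ without these hypotheses; once they are in force, hypothesis~\ref{itiii3} supplies precisely the integrability required by Lemma~4.4.10, and the final cancellation is immediate because the two partials coincide.
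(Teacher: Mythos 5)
Your proposal is correct and follows essentially the same route as the paper's own proof: reduce the claim to the deterministic identity $(\vec\A_T\cdot\vec m_0)=(\vec\aaa\cdot\vec m_T)$ via Theorem~\ref{thm1}, then define $g(s,t)=(\vec\A_s\cdot\vec m_t)$, extract the two one-variable absolute-continuity identities with the common integrand $(L_k\vec\A_s\cdot\vec m_t)$ from Conditions~\ref{itii3} and~\ref{itii3b}, and invoke Lemma~4.4.10 of \cite{EK:05} under the integrability hypothesis~\ref{itiii3}. Your explicit remark that \ref{itii3b} and~\ref{itii3} are what make the pairing of $\vec\A_s$ with $\vec m_t$ legitimate, whereas the raw solution notions of Definitions~\ref{def:sol} and~\ref{def:solweak} only test against $\overline\Scal_k$ and $\Rcal$ respectively, is exactly the point of those two hypotheses.
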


\begin{proof}
The proof consists in proving $(\vec \A_{T} \cdot \vec  m_{0})=(\vec \aaa \cdot \vec m_T)$ and then applying the dual moment formula.
Set 
$f(s,t):=(\vec \A_s\cdot \vec m_t)$
and $F(s,t):=( L_k \vec\A_{s} \cdot \vec m_{t})$ so that  due to the  Conditions \ref{iti3}, \ref{itii3}, and \ref{itii3b} we have
$f(\overline s, t) -f(\underline s, t)=\int_{\underline s}^{\overline s}F(s,t) ds
$ and $
f( s, \overline t) -f( s, \underline t)=\int_{\underline t}^{\overline t}F(s,t) dt.
$
This together with Condition \ref{itiii3} and Lemma 4.4.10 in \cite{EK:05} then yields $f(T,0)= (\vec \A_{T} \cdot \vec  m_{0})=(\vec \A \cdot \vec m_T)=f(0,T)$.
Since $\vec m_0=(1,\lambda_0,\ldots,\lambda_0^{\otimes k})$ and $\mathbb{E}[ \vec \A \cdot \overline{\lambda}_T]= (\vec \A_T \cdot \overline{\lambda}_0)$ by Theorem~\ref{thm1}, the claim follows.
\end{proof}

\begin{remark}
Consider the setting of Corollary~\ref{cor1} and let $(\vec\A_t)_{t\geq0}$
be given by \ref{iti3}. A deterministic process $(\vec m_t)_{t\geq0} $ 
then satisfies Conditions \ref{itii3} and \ref{itii3b} if and only if $ (\vec \A_{s} \cdot \vec m_{t} )$ is absolutely continuous in $s$ and $t$ and satisfies
\begin{align*}
\partial_t ( \vec \A_{s}\cdot \vec m_{t})& = (L_k \A_{s} \cdot \vec  m_{t} ), & &
\vec m_0=(1,\lambda_0,\ldots,\lambda_0^{\otimes k}),\\
\partial_s ( \vec \A_{s} \cdot \vec m_{t})& = ( L_k\vec\A_{s} \cdot \vec m_{t} ), & &
\vec \A_0=\vec\A.
\end{align*}
\end{remark}

\begin{example}\label{ex4}
Consider again the setting of Example~\ref{ex3}. We now  illustrate how Theorem~\ref{dual2} and Corollary~\ref{cor1} can be applied in this setting.
Since $\E[\lambda_t^j] < \infty $  for each $j$,  we can set $\vec m_t:=(1,\E[\lambda_t],\ldots,\E[\lambda_t^k])$ and 
 the conditions of Theorem~\ref{dual2} are satisfied. We thus get that
$\vec m_t$ 
is a solution of the system of linear ODEs given by
$
\partial_t  \vec m_t =  \M_k\vec m_t, $ for $ 
\vec m_0=(1,\lambda_0,\ldots,\lambda_0^k).					
$
Since also the conditions of Corollary~\ref{cor1} are satisfied (or more directly, since this system has a unique solution)
 we can conclude that
 $$(1,\E[\lambda_t],\ldots,\E[\lambda_t^k])^\top=e^{tG_k^\top}(1,\lambda_0,\ldots,\lambda_0^k)^\top.$$
\end{example}


\begin{remark}\label{rem8}

An inspection of the moment formulas gives the impression that the dual moment formula is more suitable for computing $\E[p(\lambda_t)]$ for some fixed polynomial $p \in P^D$. In practice, it can however happen that the system of linear ODEs given by \eqref{eqn12}
is harder to solve than its adjoint given by \eqref{momentODE}.
The application presented in Section~\ref{sec43} is a clear instance of this situation.  If this is the case, the bidual moment formula can be used to provide an heuristic ansatz for a solution $(\vec \A_t)_{t\geq0}$ of the dual system of ODEs. Indeed, let $(\vec m_t)_{t\geq0}$ be a solution of the bidual ODE system and assume that both moment formulas hold. Then the relation
$(\vec \A_t\cdot\overline \lambda_0)=\E[(\vec \A\cdot\overline \lambda_t)|\lambda_0]=(\vec \A\cdot \vec m_t),$
holds and can be used as defining property for $(\vec \A_t)_{t\geq0}$.
\end{remark}

\subsection{Some pratical conditions for applying Theorem~\ref{thm1}}\label{sec33}

We here provide some sufficient conditions which imply  \ref{itiinew} and \ref{itii} of Theorem~\ref{thm1}.
Throughout the section we  let $L:P^D\to P$ be a polynomial operator and $L_k$   
a closable $k$-th dual operator with domain $\Dcal(L_k)$.
We also 
let $(\lambda_t)_{t \geq 0} $ be a polynomial process corresponding to $L$ and we assume that $(\vec\A_t)_{t\geq0}$ is a $\overline \Scal_k$-solution of   \eqref{eqn12} in the sense of Definition~\ref{def:sol}.
A first intuitive sufficient condition that implies Conditions \ref{itiinew} and \ref{itii} of Theorem~\ref{thm1} is provided in the following lemma.
\begin{lemma}\label{lem:sufficient}
If $\E[\sup_{t\leq T}\|\lambda_t\|^{k}]<\infty$ for all $k\in \N$ , then condition \ref{itiinew} of Theorem~\ref{thm1} is satisfied. In this case, condition \ref{itii} of the same theorem is implied by
\begin{equation}\label{eqn104}
\int_0^T\|  {L_k^j}\vec\A_{s}\|_{*j}ds<\infty,\qquad\text{for all } j\in\{0,\ldots,k\}.
\end{equation}
\end{lemma}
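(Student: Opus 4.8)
The plan is to prove the two claims separately, both relying on localization and a dominated-convergence argument. For the first claim, fix $p=p_{\vec\A_s}$ for some $s\in[0,T]$; since $L$ is $\Scal$-polynomial and $\vec\A_s\in\Dcal(L_k)$, both $p$ and $Lp$ are polynomials on $\B$ of degree at most $k$, say $\deg(p),\deg(Lp)\le k$. Hence there is a constant $C$ (depending on $s$, $T$, and the norms $\|\A_{s,j}\|_{*j}$, $\|L_k^j\vec\A_s\|_{*j}$) such that $|p(\y)|\le C(1+\|\y\|^k)$ and $|Lp(\y)|\le C(1+\|\y\|^k)$ for all $\y\in\Scal$; this uses only the definition of a polynomial on $\B$ via the pairings $\langle\A_j,\y^{\otimes j}\rangle$ and the crossnorm property $\|\y^{\otimes j}\|_\times=\|\y\|^j$, which gives $|\langle\A_j,\y^{\otimes j}\rangle|\le\|\A_j\|_{*j}\|\y\|^j$. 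The process $N^p$ from \eqref{IIeqnN} is a local martingale by the martingale-problem assumption; let $(\tau_n)_{n}$ be a localizing sequence of stopping times with $\tau_n\uparrow\infty$. Then $(N^p_{t\wedge\tau_n})_{t\le T}$ is a true martingale for each $n$, so for $0\le u\le t\le T$,
\[
\E[N^p_{t\wedge\tau_n}\mid\Fcal_u]=N^p_{u\wedge\tau_n}.
\]
To pass to the limit $n\to\infty$ I would dominate: $|N^p_{t\wedge\tau_n}|\le |p(\lambda_{t\wedge\tau_n})|+|p(\lambda_0)|+\int_0^T|Lp(\lambda_{s'})|\,ds'\le 2C(1+\sup_{t'\le T}\|\lambda_{t'}\|^k)+C\int_0^T(1+\|\lambda_{s'}\|^k)\,ds'$, and the right-hand side is integrable precisely because $\E[\sup_{t'\le T}\|\lambda_{t'}\|^k]<\infty$ (the integral term being handled by Fubini/Tonelli, as $\E\int_0^T(1+\|\lambda_{s'}\|^k)\,ds'\le T(1+\E[\sup_{t'\le T}\|\lambda_{t'}\|^k])<\infty$). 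Dominated convergence for conditional expectations then upgrades $N^p$ to a true martingale on $[0,T]$, which is exactly condition \ref{itiinew}; since $s\in[0,T]$ was arbitrary, we are done with the first claim.

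For the second claim, assuming additionally \eqref{eqn104}, I estimate $\E\bigl[\,|Lp_{\vec\A_s}(\lambda_u)|\,\bigr]$. By the definition of $L_k$ and the short-hand \eqref{eqn21}, $Lp_{\vec\A_s}(\y)=(L_k\vec\A_s\cdot\overline\y)=\sum_{j=0}^k\langle L_k^j\vec\A_s,\y^{\otimes j}\rangle$ for $\y\in\Scal$, so
\[
\bigl|Lp_{\vec\A_s}(\lambda_u)\bigr|\le\sum_{j=0}^k\|L_k^j\vec\A_s\|_{*j}\,\|\lambda_u\|^j\le\sum_{j=0}^k\|L_k^j\vec\A_s\|_{*j}\,\bigl(1+\textstyle\sup_{t'\le T}\|\lambda_{t'}\|^k\bigr),
\]
using $j\le k$ and $\|\y\|^j\le 1+\|\y\|^k$. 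Taking expectations and writing $M:=1+\E[\sup_{t'\le T}\|\lambda_{t'}\|^k]<\infty$,
\[
\bigl|\E[Lp_{\vec\A_s}(\lambda_u)]\bigr|\le M\sum_{j=0}^k\|L_k^j\vec\A_s\|_{*j}.
\]
Integrating in $s$ and $u$ over $[0,T]^2$ gives
\[
\int_0^T\!\!\int_0^T\bigl|\E[Lp_{\vec\A_s}(\lambda_u)]\bigr|\,ds\,du\le T\,M\sum_{j=0}^k\int_0^T\|L_k^j\vec\A_s\|_{*j}\,ds<\infty
\]
by \eqref{eqn104}, which is exactly condition \ref{itii}.

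I expect the only genuine subtlety to be the localization step: one must be careful that the local martingale $N^p$ is dominated \emph{uniformly in $n$} by a single integrable random variable before invoking dominated convergence for conditional expectations, and that $\E[\sup_{t'\le T}\|\lambda_{t'}\|^k]<\infty$ (rather than merely $\sup_{t'\le T}\E[\|\lambda_{t'}\|^k]<\infty$) is precisely what makes this domination work — in particular controlling the terminal value $p(\lambda_{t\wedge\tau_n})$ whose argument is evaluated at the random time $t\wedge\tau_n$. The polynomial growth bounds and the Fubini step for the time integral are routine given the crossnorm estimates recalled above.
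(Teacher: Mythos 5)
Your argument is essentially the right one for showing that $N^p$ is a true martingale when $p\in P^D$, and your treatment of condition~\ref{itii} via the crossnorm estimate $\bigl|\langle L_k^j\vec\A_s,\lambda_u^{\otimes j}\rangle\bigr|\le\|L_k^j\vec\A_s\|_{*j}\|\lambda_u\|^j$ followed by a Fubini step matches the paper. However, there is a genuine gap in the first claim: you write ``the process $N^{p_{\vec\A_s}}$ is a local martingale by the martingale-problem assumption,'' but the martingale-problem hypothesis only furnishes the local-martingale property of $N^p$ for $p\in P^D$, i.e.\ for coefficient vectors $\vec\A\in\bigoplus_{j=0}^k D^{\otimes j}$. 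In Theorem~\ref{thm1} the curve $(\vec\A_s)$ takes values in $\Dcal(L_k)$, the domain of the \emph{closure} of $L_k$, which is in general strictly larger than $\bigoplus_{j=0}^k D^{\otimes j}$; for such $\vec\A_s$ the function $p_{\vec\A_s}$ need not lie in $P^D$ at all, and $Lp_{\vec\A_s}$ in \eqref{eqn21} is merely notation for the pairing $(L_k\vec\A_s\cdot\overline\y)$, not the result of applying $L$. So for these $\vec\A_s$ there is no localizing sequence to start from, and the argument as written does not apply.

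The paper's proof fills this in with a two-stage argument: first it obtains the true-martingale property for all $p\in P^D$ by exactly the domination you describe (their bound \eqref{eqn22} is the uniform-in-$t$ version of your pointwise estimate); then, for a general $\vec\A\in\Dcal(L_k)$, it approximates $\vec\A$ by a sequence $\vec\A_n\in\bigoplus_{j=0}^k D^{\otimes j}$ with $\vec\A_n\to\vec\A$ and $L_k\vec\A_n\to L_k\vec\A$ (this is precisely what closability provides), uses the uniform bound $\sup_{t\le T}|p_{\vec\A_n}(\lambda_t)|\le\Lambda_k\sum_j\|\A_{n,j}\|_{*j}$ and the corresponding one for $Lp_{\vec\A_n}$ to dominate, and passes to the limit under the conditional expectation by dominated convergence. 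You should add this second approximation step to close the proof.
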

\begin{proof}
Set $\Lambda_k:=(1+\sup_{t\in[0,T]}\|\lambda_t\|^{k})$ and set $p_{\vec\A}$ and $Lp_{\vec\A}$ as in \eqref{eqn21}. Since  for each $\vec \A\in  \Dcal(L_k)$ it holds
\begin{equation}\label{eqn22}
\sup_{t\in[0,T]}|p_{\vec \A}(\lambda_t)|\leq 
\Lambda_k\sum_{j=0}^k\| \A_{j}\|_{*j}
\quad\text{and}\quad
\sup_{t\in[0,T]}| Lp_{\vec \A}(\lambda_t)|\leq 
\Lambda_k\sum_{j=0}^k\|  {L_k^j}\vec\A\|_{*j},
\end{equation}
we can see that $\E[\sup_{t\leq T}\|\lambda_t\|^{k}]<\infty$ guarantees that the process $N^p$ given by \eqref{IIeqnN} is a true martingale for each $p\in P^D$.
The same bound together with the dominated convergence theorem can be used to prove that $N^{p_{\vec\A}}$ is a true martingale for each $\vec\A\in \Dcal(L_k)$. The second part of the statement follows from \eqref{eqn22}.
\end{proof}

We now move to a different condition which in particular guarantees that condition \ref{itiinew} is always satisfied in the classical cases.
\begin{definition}\phantomsection\label{def2}
\begin{enumerate}
\item 
We say that $p\in P^D$ is $(C,q)$-bounded on $\Scal$ if there exists a constant $C >0$ and a polynomial $q\in P^D$ such that on $\Scal$ we have
$$p^2\leq C q,\qquad  (Lp)^2\leq C q,\qquad\text{and}\qquad |L q|\leq C q.$$

\item For general $p \in P$,
we also call it $(C,q)$-bounded on $\Scal$, if 
$(p,Lp)$ can be approximated by a sequence $((p_n,Lp_n))_{n\in \N}$ with $p_n\in P^D$ being $(C,q)$-bounded in the sense of (i).
\end{enumerate}
\end{definition}

The reason why the property defined in Definition~\ref{def2} is so important can be seen from the next lemma.
\begin{lemma}\label{lem8}
Fix   $\vec\A\in \Dcal(L_k)$, set $p_{\vec\A}$ and $Lp_{\vec\A}$ as in \eqref{eqn21}, and suppose that $p_{\vec\A}$ is $(C,q)$-bounded for some $C >0$ and $q\in P^D$ with $q(\lambda_0)=1$.
Then for all $t\geq0$
$$\E[q(\lambda_t)]\leq e^{ C t},\qquad\E[p_{\vec\A}(\lambda_t)^2]\leq Ce^{ C t}, \qquad \E[(Lp_{\vec\A}(\lambda_t))^2]\leq Ce^{ C t}, \qquad  \E[\sup_{s\leq t}(N_s^{p_{\vec\A}})^2]<\infty.$$
In this case,
the process $N^{p_{\vec\A}}$ is a square integrable martingale.
\end{lemma}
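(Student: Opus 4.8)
The plan is to run a standard localization and Grönwall argument on the process $q(\lambda_t)$. Since $q \in P^D$ and $|Lq| \le Cq$ on $\Scal$, consider the local martingale $N^q$ from \eqref{IIeqnN}. Take a localizing sequence of stopping times $(\tau_n)_{n \in \N}$ reducing $N^q$; then for each $n$ we have $\E[q(\lambda_{t \wedge \tau_n})] = q(\lambda_0) + \int_0^t \E[Lq(\lambda_{s \wedge \tau_n}) \1{s < \tau_n}]\,ds$. Using $q(\lambda_0)=1$, the bound $|Lq| \le Cq$ on $\Scal$ (recall $\lambda$ is $\Scal$-valued), and positivity of $q$ on $\Scal$ (which follows since $q \ge p^2/C \ge 0$ there), we get $\E[q(\lambda_{t \wedge \tau_n})] \le 1 + C \int_0^t \E[q(\lambda_{s \wedge \tau_n})]\,ds$, so by Grönwall $\E[q(\lambda_{t \wedge \tau_n})] \le e^{Ct}$ uniformly in $n$. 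Fatou's lemma as $n \to \infty$ then yields $\E[q(\lambda_t)] \le e^{Ct}$, which is the first claim. The inequalities $p_{\vec\A}^2 \le Cq$ and $(Lp_{\vec\A})^2 \le Cq$ on $\Scal$ then immediately give $\E[p_{\vec\A}(\lambda_t)^2] \le C e^{Ct}$ and $\E[(Lp_{\vec\A}(\lambda_t))^2] \le C e^{Ct}$.

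Next I would upgrade $N^{p_{\vec\A}}$ from a local martingale to a square-integrable martingale. From \eqref{IIeqnN} we have $N_t^{p_{\vec\A}} = p_{\vec\A}(\lambda_t) - p_{\vec\A}(\lambda_0) - \int_0^t Lp_{\vec\A}(\lambda_s)\,ds$. By Jensen and the moment bound just obtained,
\[
\E\Big[\sup_{s \le t}(N_s^{p_{\vec\A}})^2\Big] \le 3\,\E\Big[\sup_{s \le t} p_{\vec\A}(\lambda_s)^2\Big] + 3\,p_{\vec\A}(\lambda_0)^2 + 3\,\E\Big[\Big(\int_0^t |Lp_{\vec\A}(\lambda_s)|\,ds\Big)^2\Big],
\]
and the last term is bounded by $3t \int_0^t \E[(Lp_{\vec\A}(\lambda_s))^2]\,ds \le 3t \int_0^t C e^{Cs}\,ds < \infty$ by Tonelli. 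For the supremum of $p_{\vec\A}(\lambda_s)^2$: one way is to observe that $p_{\vec\A}(\lambda_{\cdot})^2$ is dominated by $C q(\lambda_{\cdot})$ on $\Scal$, and $q(\lambda_{\cdot \wedge \tau_n}) = 1 + \int_0^{\cdot}Lq(\lambda_{s\wedge\tau_n})\1{s<\tau_n}\,ds + N^q_{\cdot\wedge\tau_n}$ is a sum of a (localized) martingale and an absolutely continuous part; applying Doob's $L^2$ inequality to the martingale part and the Grönwall bound to the drift part, then Fatou in $n$, gives $\E[\sup_{s\le t} q(\lambda_s)] < \infty$, hence $\E[\sup_{s \le t} p_{\vec\A}(\lambda_s)^2] < \infty$. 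Combining, $\E[\sup_{s \le t}(N_s^{p_{\vec\A}})^2] < \infty$. A local martingale dominated in $L^1$ by an integrable random variable (here even $L^2$) on each $[0,t]$ is a true martingale, and square integrability has just been shown; this establishes the final assertion.

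The main obstacle is the control of $\E[\sup_{s \le t} q(\lambda_s)]$ (equivalently $\E[\sup_{s\le t} p_{\vec\A}(\lambda_s)^2]$): the bare Grönwall estimate only controls $\E[q(\lambda_t)]$ pointwise in $t$, not the running supremum, and the process $q(\lambda_{\cdot})$ need not be a submartingale (the sign of its drift $Lq$ is not controlled, only $|Lq| \le Cq$). The fix sketched above — splitting the localized semimartingale $q(\lambda_{\cdot\wedge\tau_n})$ into its finite-variation part, bounded in expectation by $C\int_0^t \E[q(\lambda_{s\wedge\tau_n})]\,ds \le Ct\,e^{Ct}$ via Tonelli, and its martingale part $N^q_{\cdot\wedge\tau_n}$, to which Doob's inequality applies once one knows $\E[(N^q_{t\wedge\tau_n})^2] < \infty$ — requires a small bootstrap: one first argues $N^q_{\cdot\wedge\tau_n}$ is square integrable using that $q^2$ is itself controlled (if one additionally assumes $q$ is $(C,q')$-bounded, or more simply using the defining inequalities applied to the polynomial $q$ in place of $p$), or one localizes further so that $N^q$ is bounded on each stochastic interval and passes to the limit. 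Everything else is routine dominated/monotone convergence and Grönwall.
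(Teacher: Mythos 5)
Your opening half is correct and matches the paper's strategy (which follows Theorem~2.10 of \cite{CKT:12}): localize $N^q$, apply Gr\"onwall to get the bound $\E[q(\lambda_{t\wedge\sigma_m})]\le e^{Ct}$ uniformly in $m$, pass to the limit by Fatou, and then read off the bounds on $\E[p_{\vec a}(\lambda_t)^2]$ and $\E[(Lp_{\vec a}(\lambda_t))^2]$ from $p_{\vec a}^2\le Cq$ and $(Lp_{\vec a})^2\le Cq$. (Small refinement: the same Gr\"onwall argument gives $\E[q(\lambda_\tau)]\le e^{Ct}$ for any stopping time $\tau\le t$, not just for deterministic $t$; you will want this in a moment.)

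The second half has a genuine gap, which you partly flag yourself. You try to control $\E[\sup_{s\le t}p_{\vec a}(\lambda_s)^2]$ via $\E[\sup_{s\le t}q(\lambda_s)]$, but the Doob step you propose on the martingale part of $q(\lambda_{\cdot\wedge\tau_n})$ requires a \emph{uniform-in-$n$} bound on $\E[(N^q_{t\wedge\tau_n})^2]$, and this quantity involves $q(\lambda)^2$ and $(Lq(\lambda))^2\le C^2q(\lambda)^2$ --- that is, the second moment of $q(\lambda)$ --- which is not controlled by the $(C,q)$-boundedness of $p_{\vec a}$ alone. The workarounds you sketch (assume $q$ is itself $(C,q')$-bounded, or ``localize further'') do not close the gap without smuggling in an extra assumption: a further localization makes each $\E[(N^q_{t\wedge\tau_n})^2]$ finite but destroys the uniformity needed to pass to the limit. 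The clean way out is to skip $\sup_s q(\lambda_s)$ entirely and apply Doob's $L^2$ inequality directly to the \emph{stopped} local martingale $N^{p_{\vec a}}_{\cdot\wedge\tau_n}$, where $(\tau_n)$ reduces $N^{p_{\vec a}}$. By Jensen and $(C,q)$-boundedness,
\[
(N^{p_{\vec a}}_{t\wedge\tau_n})^2 \;\le\; 3Cq(\lambda_{t\wedge\tau_n}) + 3Cq(\lambda_0) + 3Ct\int_0^t q(\lambda_u)\,du,
\]
which involves only the \emph{first} power of $q$, so $\E[(N^{p_{\vec a}}_{t\wedge\tau_n})^2]\le K_t$ with $K_t$ independent of $n$ by the refined Gr\"onwall bound on $\E[q(\lambda_\tau)]$. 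Doob then gives $\E[\sup_{s\le t}(N^{p_{\vec a}}_{s\wedge\tau_n})^2]\le 4K_t$, and monotone convergence as $n\to\infty$ yields $\E[\sup_{s\le t}(N^{p_{\vec a}}_s)^2]\le 4K_t<\infty$; the true-martingale conclusion then follows exactly as you close your argument. Finally, note that the lemma is stated for $\vec a\in\Dcal(L_k)$, so $p_{\vec a}$ need not lie in $P^D$; the paper handles that case separately by approximating with a $(C,q)$-bounded sequence $(p_n)\subset P^D$ (Definition~\ref{def2}(ii)) and passing to the limit by dominated convergence using the $q$-bound as dominating function. Your proof does not address this case.
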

\begin{proof}
For $p_{\vec\A}\in P^D$ the proof follows the proof of Theorem~2.10 in \cite{CKT:12}, where  the stopping times are chosen to be localizing sequences for $N^{q}$ and $N^{p_{\vec\A}}$ and the role of $F$ there is now taken by $q$. For $p_{\vec\A}\notin P^D$ the claim follows by the dominated convergence theorem.  Indeed, 
by \eqref{IIeqnN} $(C,q)$-boundedness yields
$$\E[(N^{p}_t)^2]=\E[\lim_{n\to\infty}(N^{p_n}_t)^2]
\leq 3C\E\bigg[q(\lambda_t)+q(\lambda_0)+t \int_0^t q(\lambda_u)du\bigg]<\infty,$$
    where in the last inequality we use that $q(\lambda_t)$ is integrable due to the first part of the proof.
Analogously, since 
$|N^{p_n}_t-N^{p_n}_s|
\leq 2C(q(\lambda_t)+q(\lambda_s)+(t-s) \int_0^t q(\lambda_u)du),$
 the dominated convergence theorem yields
$\E[(N^{p}_t-N^{p}_s)1_A]
=\lim_{n\to\infty}\E[(N^{p_n}_t-N^{p_n}_s)1_A]=0,$
for all $A\in \Fcal_s$, proving the martingale property.
\end{proof}

\begin{remark}\label{rem7}
Note that in the above definition the constants $(C,q)$ can always depend on the polynomial $p$.
In classical cases (see Example~\ref{ex11} below) we can however choose $q$ uniformly for all polynomials up to degree $m$, say. Indeed,
there typically exists a polynomial $q_m\in P^D$ such that each $p\in P^D$ with $\deg(p)\leq m$ is $(C_p,q_m)$-bounded  for some constant $C_p$  that can depend on the polynomial $p$ (that is why we make this dependence now explicit). More precisely, in such cases there is a $q_m\in P^D$ satisfying 
\begin{equation}\label{eqn16}
1+\|\y\|^{2m}\leq K  q_m(\y)\qquad\text{for all}\qquad\y\in \Scal,
\end{equation}
for some constant $K$. Since each $p\in P$ with $\deg(p)\leq m$ satisfies $p(\y)^2\leq K_p(1+ \|\y\|^{2m})$ for some constant $K_p$, condition \eqref{eqn16}  implies that $p_{\vec\A}$  is $(C_p,q_m)$-bounded for each $\vec\A\in \Dcal(L_k)$, where $C_p=(K_p+K_{Lp}+K_{q_m})K$.

Observe that condition~\eqref{eqn16} has two important consequences. First, it guarantees that $N^{p_{\vec\A}}$ is a square integrable martingale for each $\vec\A\in \Dcal(L_k)$, and thus
Condition~\ref{itiinew} is satisfied. Second, 
since
$$
p_{\vec\A_{s}}(\y)=\sum_{j=0}^k\langle  {L_k^j}\vec\A_{s},\y^{\otimes j}\rangle
\leq (1+\|\y\|^{2k})\sum_{j=0}^k\|  {L_k^j}\vec\A_{s}\|_{*j}
\leq Kq_k(\y)\sum_{j=0}^k\|  {L_k^j}\vec\A_{s}\|_{*j}
$$
one can see that Condition \ref{itii} in Theorem \ref{thm1} is implied by \eqref{eqn104}.
\end{remark}

\begin{example}\label{ex11}
As mentioned before condition \eqref{eqn16} is always satisfied in the classical cases. 
Examples include the case where $\B$ is finite dimensional ($q_m(\y):=1+\sum_i \y_i^{2m}$), the case where $\Scal$ is the set of finite positive measures on some underlying space $E$ ($q_m(\y)=1+\y(E)^{2m}$, where $y(E)$ denotes the total mass of $\y$), and the case where  $\Scal$ is bounded ($q_m(y)=1$).
\end{example}

Observe that if condition \eqref{eqn16} does not hold we cannot expect $N^p$ to be a true martingale for each $p\in P^D$. Indeed, $(C,q)$-boundedness on $\Scal$ does not need to hold for each $p\in P^D$.

\begin{lemma}\label{lem14}
Let $(\vec\A_t)_{t\geq0}$ satisfy Condition  \ref{iti} of Theorem~\ref{thm1}.
If $p_{\vec\A_s}$ is $(C,q_{s})$-bounded  for each $s\in[0,T]$ and some family of polynomials $(q_s)_{s \in [0,T]}$, then Conditions~\ref{itiinew} and \ref{itii} of Theorem~\ref{thm1} are satisfied.
\end{lemma}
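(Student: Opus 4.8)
The plan is to deduce Conditions \ref{itiinew} and \ref{itii} of Theorem~\ref{thm1} directly from Lemma~\ref{lem8}, applied separately for each fixed $s\in[0,T]$. Indeed, Lemma~\ref{lem8} was stated under the hypothesis that $p_{\vec\A}$ is $(C,q)$-bounded for \emph{some} $q\in P^D$ with $q(\lambda_0)=1$; here we are handed, for each $s$, a polynomial $q_s\in P^D$ witnessing the $(C,q_s)$-boundedness of $p_{\vec\A_s}$. A harmless normalisation handles the constraint $q_s(\lambda_0)=1$: if $q_s(\lambda_0)\neq 0$ we may replace $q_s$ by $q_s/q_s(\lambda_0)$, adjusting $C$ accordingly (and $(C,q)$-boundedness is preserved up to enlarging $C$); if $q_s(\lambda_0)=0$, then since $p_{\vec\A_s}^2\le C q_s$ on $\Scal$ forces $p_{\vec\A_s}(\lambda_0)=0$ and similarly $Lp_{\vec\A_s}(\lambda_0)=0$, one adds a suitable constant (or uses $q_s+1$, which still dominates after rescaling $C$) to arrange a strictly positive value at $\lambda_0$. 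In either case Lemma~\ref{lem8} then yields that $N^{p_{\vec\A_s}}$ is a square integrable, hence true, martingale, which is exactly Condition~\ref{itiinew}.

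For Condition~\ref{itii} I would use the quantitative bounds furnished by Lemma~\ref{lem8}, namely $\E[(Lp_{\vec\A_s}(\lambda_u))^2]\le C e^{Cu}$ for all $u\in[0,T]$ (with the constant depending on $s$ through the pair $(C,q_s)$). By Cauchy--Schwarz this gives $|\E[Lp_{\vec\A_s}(\lambda_u)]|\le \sqrt{C}\,e^{Cu/2}\le \sqrt{C}\,e^{CT/2}=:K_s<\infty$, a bound that is finite for each fixed $s$. To conclude $\int_0^T\int_0^T|\E[Lp_{\vec\A_s}(\lambda_u)]|\,ds\,du<\infty$ one then needs the map $s\mapsto K_s$ to be integrable on $[0,T]$; I would obtain this by noting that the constants $C$ and the polynomials $q_s$ can be taken to depend measurably — indeed, in practice, continuously — on $s$, since $(\vec\A_s)_{s\in[0,T]}$ is itself a (typically continuous) solution of the ODE \eqref{eqn12}, so that $\deg(p_{\vec\A_s})$ is bounded uniformly in $s$ and the $(C,q_s)$-boundedness can be organised with $s\mapsto C$ locally bounded. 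The double integral is then dominated by $T^2\sup_{s\le T}K_s<\infty$.

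The main obstacle is precisely this last point: the statement as phrased only asserts $(C,q_s)$-boundedness pointwise in $s$, with $C$ and $q_s$ a priori allowed to depend on $s$ in an uncontrolled way, so the bound $K_s$ could blow up as $s$ ranges over $[0,T]$ and the double integral in Condition~\ref{itii} could diverge. To make the argument airtight I would either (a) read the hypothesis as implicitly providing a uniform constant $C$ (which is the natural reading, and holds in all the examples of interest — cf.\ Remark~\ref{rem7}, where a single $q_m$ works for all polynomials of bounded degree), in which case $K_s\le \sqrt{C}e^{CT/2}$ is already uniform and the double integral is trivially finite; or (b) invoke measurability of $s\mapsto(\vec\A_s, C_s, q_s)$ together with the local boundedness coming from continuity of the ODE solution, apply Lemma~\ref{lem8} to get $u\mapsto\E[(Lp_{\vec\A_s}(\lambda_u))^2]$ bounded by $C_s e^{C_sT}$, and then conclude by Tonelli's theorem. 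Under interpretation (a) the proof is essentially immediate: Lemma~\ref{lem8} delivers both the true martingale property and the square-integrability bound, Cauchy--Schwarz converts the latter into Condition~\ref{itii}, and we are done.
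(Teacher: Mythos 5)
Your proposal is correct and follows essentially the same route as the paper: apply Lemma~\ref{lem8} pointwise in $s$ to obtain the true martingale property (Condition~\ref{itiinew}), and then combine the square-integrability bound $\E[(Lp_{\vec\A_s}(\lambda_u))^2]\le Ce^{Cu}$ from that lemma with Cauchy--Schwarz and the uniformity of $C$ to get $\sup_{s,u\in[0,T]}|\E[Lp_{\vec\A_s}(\lambda_u)]|<\infty$, which is Condition~\ref{itii}. Your interpretation (a) is the intended one: the statement attaches the subscript $s$ only to $q_s$, not to $C$, so the constant is uniform in $s$ by hypothesis, which is precisely what makes the paper's $\sup_{u}(1+Ce^{Cu})$ bound work; interpretation (b) and the measurability digression are therefore unnecessary. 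Your remark about the normalization $q_s(\lambda_0)=1$ required by Lemma~\ref{lem8} is a fair point that the paper glosses over; just note that rescaling $q_s\mapsto q_s/q_s(\lambda_0)$ changes $C$ to $C\,q_s(\lambda_0)$ in two of the three defining inequalities, so to preserve the needed $s$-uniformity one must implicitly assume either that the $q_s$ are already normalized or that $s\mapsto q_s(\lambda_0)$ is bounded on $[0,T]$.
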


\begin{proof}
Condition \ref{itiinew} follows from Lemma~\ref{lem8}. By the same lemma we can also 
 compute
$$
\sup_{s,u\in[0,T]} \big|\E[
Lp_{\vec\A_{s}}(\lambda_{u})]\big|
\leq  \sup_{u\in[0,T]}
(1+Ce^{ Cu})
<\infty,
$$
proving that condition \ref{itii} is satisfied as well.
\end{proof}

\section{Examples and applications}\label{sec:Examples}

This section is devoted to show the connection of our general setup with some examples from the literature as well as 
the wide applicability of the previously derived moment formulas, e.g.~for forward variance modeling and the computation of the expected signature.

\subsection{Generic polynomial operators}

We start by introducing generic polynomial operators of L\'evy  type (see also Section~4 in \cite{LS:19}).
To do so, we briefly recall the notion of the Fr\'echet derivative.
\begin{definition} Let $(\B,\|\fdot\|)$ be a Banach space.
A map $f:\B \to\R$ is said to be Fr\'echet differentiable at $\y\in \B $ if  
$$\lim_{\|\widetilde \y \|\to0}\frac{|f(\y+\widetilde\y)-f(\y)-\langle\partial f(\y),\widetilde\y\rangle|}{\|\widetilde\y\|}=0,$$ 
for some $\partial f(\y)\in \B^*$.
Analogously, whenever it exists, we denote by $\partial^k f(\y)$ the element of $(\B^{\otimes k})^*$ corresponding to the $k$-th iterated Fr\'echet derivative of $f$ at $\y$. 
\end{definition}

Observe in particular that for every sufficiently differentiable $\phi:\R^d\to\R$ and $\A_1,\ldots,\A_d\in \B^*$ setting $p(\y):=\phi(\langle \A_1,\y\rangle,\ldots,\langle\A_d,\y\rangle)$ we have that $p$ is Fr\'echet differentiable at each $\y$ in $\B$ and 
\begin{equation}\label{eqn20}
\partial^n p(\y)=\sum_{i_1,\ldots, i_n=1}^d \phi_{i_1,\dots, i_n}\big(\langle \A_1,\y\rangle,\ldots,\langle\A_d,\y\rangle\big) \A_{i_1}\otimes\ldots\otimes \A_{i_n},
\end{equation}
where $\phi_{i_1,\dots, i_n}(x):=\frac{d^n\phi}{dx_{i_1}\cdots dx_{i_n}} (x)$. This in particular implies that for each $p\in P^D$ for some $D\subseteq B^*$ we have that $\partial^n p(\y)\in D^{\otimes n}$ for all $\y\in \B$.
With the notion of the Fr\'echet derivative we can now show how generic polynomial operators $L:P^D\to P$ look like.
\begin{lemma}
 Let $\Scal\subseteq \B $, fix a linear subspace $D\subseteq \B^*$, and let $L: P^D \to P$ be a linear operator. 
Suppose that $L:P^D\to P$ acts on test functions $p\in P^D$ by
\begin{equation*}
Lp(\y) = 
 B(\partial p(\y), \y) + \frac12  Q(\partial^2p(\y), \y)+ \int_{\Scal}(p(\z)-p(\y)-\langle\partial p(\y),\z-\y\rangle) N(\y,d\z),
\end{equation*}
for each $\y\in \Scal$, where 
\begin{itemize}
\item $B(\fdot,\y)$ is a linear operator from $D$ to $\R$ and $B(\A,\y)$ is a polynomial of degree at most $1$ on $\B$ for each $\A\in D$.
\item $N(\y,d\z)$ is a measure on $\Scal$ such that $\int_{\Scal}\langle\A,\y-\z\rangle^k N(\y,d\z)$ is a polynomial of degree at most $k$ on $B$ for each  for each $\A\in D$ and each $k\in\{3,4,\ldots\}$.
\item $Q(\fdot,\y)$ is a linear operator from $D\otimes D$ to $\R$, $Q(\A\otimes \A,\y)\geq 0$,  and $Q(\A\otimes \A,\y)+\int_{\Scal}\langle\A,\y-\z\rangle^2 N(\y,d\z)$ is a polynomial of degree at most $2$ on $\B$ for each $\A\in D$.
\end{itemize}
Then $L$ is $\Scal$-polynomial. Moreover, the 
drift, diffusion, and jump behavior of the corresponding $\Scal$-valued polynomial process $(\lambda_t)_{t\geq0}$ is governed by these objects, meaning that for each $\A_1,\ldots,\A_d\in D$, 
the $\R^d$-valued process  $((\langle\A_1,\lambda_t\rangle,\ldots,\langle \A_d,\lambda_t\rangle))_{t\geq0}$ is a semimartingale whose characteristics  $(B^{\vec\A},\widetilde{C}^{\vec\A},\nu^{\vec\A})$ (with $\widetilde{C}^{\vec\A}$ denoting the modified second characteristic in the sense of \cite{JS:03})
satisfy
\begin{align*}
B^{\vec\A}_{t,i}&=\int_0^t B(\A_i, \lambda_s)ds,\\
\widetilde C^{\vec\A}_{t,ij}&=\int_0^t \Big(Q(\A_i\otimes \A_j,\lambda_s)+\int_{\Scal}\langle\A_i,\lambda_s-\z\rangle\langle\A_j,\lambda_s-\z\rangle N(\lambda_s,d\z)\Big)ds\\
\int\xi_1^{k_1}\cdots\xi_d^{k_d}\ \nu_t^{\vec\A}(dt, d\xi)&=dt\int_\Scal\langle\A_1,\z-\lambda_t\rangle^{k_1}\cdots\langle\A_d,\z-\lambda_t\rangle^{k_d}\ N(\lambda_t,d\z)
\end{align*}
for each $k_1,\ldots,k_d\in \N_0$ such that $\sum_{j=0}^d k_j\geq 3$.

\end{lemma}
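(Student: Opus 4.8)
The plan is to establish the two assertions separately: that $L$ is $\Scal$-polynomial, and that the coordinate processes are semimartingales with the stated characteristics.

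\emph{That $L$ is $\Scal$-polynomial.} First I would reduce to monomials: since $L$ is linear and $P^D=\Span\{\y\mapsto\langle\A,\y\rangle^k:k\in\N_0,\ \A\in D\}$, it is enough to produce, for each $p(\y)=\langle\A,\y\rangle^k$ with $\A\in D$, a polynomial on $\B$ of degree at most $k$ agreeing with $Lp$ on $\Scal$. For such $p$ one has $\partial p(\y)=k\langle\A,\y\rangle^{k-1}\A$, $\partial^2p(\y)=k(k-1)\langle\A,\y\rangle^{k-2}\A\otimes\A$, and, by the binomial theorem, $p(\z)-p(\y)-\langle\partial p(\y),\z-\y\rangle=\sum_{l=2}^{k}\binom{k}{l}\langle\A,\y\rangle^{k-l}\langle\A,\z-\y\rangle^{l}$. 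Plugging these into the defining formula for $L$ and grouping the $l=2$ jump contribution with the diffusion term gives, on $\Scal$,
\begin{align*}
Lp(\y)=\ &k\langle\A,\y\rangle^{k-1}B(\A,\y)+\binom{k}{2}\langle\A,\y\rangle^{k-2}\Big(Q(\A\otimes\A,\y)+\int_\Scal\langle\A,\y-\z\rangle^2 N(\y,d\z)\Big)\\
&+\sum_{l=3}^{k}\binom{k}{l}\langle\A,\y\rangle^{k-l}\int_\Scal\langle\A,\z-\y\rangle^{l}N(\y,d\z).
\end{align*}
All the integrals are finite, and by the three bulleted hypotheses the three bracketed quantities are polynomials on $\B$ of degree at most $1$, $2$, and $l$, so each summand has degree at most $k$; hence $Lp|_\Scal$ coincides with a polynomial of degree $\le k=\deg(p)$. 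For a general $p\in P^D$ I would decompose $p=\sum_k p_k$ into its homogeneous components (so $\deg(p)=\max\{k:p_k\neq0\}$), write each $p_k$ as a linear combination of powers $\langle\A,\fdot\rangle^k$ with $\A\in D$ — possible since $D$ is a linear subspace and $k$-th powers of linear forms span the homogeneous polynomials of degree $k$ — and conclude by linearity of $L$ that $Lp|_\Scal$ agrees with a polynomial of degree $\le\deg(p)$.

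\emph{The coordinate process is a special semimartingale; first characteristic.} Fix $\A_1,\dots,\A_d\in D$ and put $X_t:=(\langle\A_1,\lambda_t\rangle,\dots,\langle\A_d,\lambda_t\rangle)$. Applying the martingale problem to $p(\y)=\langle\A_i,\y\rangle$, for which the formula collapses on $\Scal$ to $Lp(\y)=B(\A_i,\y)$ (the diffusion and jump parts vanish), gives $X_{t,i}=X_{0,i}+\int_0^tB(\A_i,\lambda_s)\,ds+N_t$ with $N$ a local martingale; this is a canonical decomposition, so $X$ is an $\R^d$-valued special semimartingale and its first characteristic relative to the identity truncation is $B^{\vec\A}_{t,i}=\int_0^tB(\A_i,\lambda_s)\,ds$, as claimed.

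\emph{Second characteristic and jump moments.} For an arbitrary polynomial test function $p(\y)=\phi(\langle\A_1,\y\rangle,\dots,\langle\A_d,\y\rangle)\in P^D$, formula \eqref{eqn20} for the iterated Fr\'echet derivatives rewrites $Lp(\lambda_s)$ as
\begin{align*}
Lp(\lambda_s)=\ &\nabla\phi(X_s)\cdot\beta_s+\tfrac12\sum_{i,j}\phi_{ij}(X_s)\,\gamma^{ij}_s\\
&+\int_\Scal\big(\phi(X_s+\Xi(\lambda_s,\z))-\phi(X_s)-\nabla\phi(X_s)\cdot\Xi(\lambda_s,\z)\big)\,N(\lambda_s,d\z),
\end{align*}
where $\beta_{s,i}:=B(\A_i,\lambda_s)$, $\gamma^{ij}_s:=Q(\A_i\otimes\A_j,\lambda_s)$, $\phi_{ij}$ denotes the mixed second partial of $\phi$, and $\Xi(\y,\z):=(\langle\A_1,\z-\y\rangle,\dots,\langle\A_d,\z-\y\rangle)$; that is, $Lp(\lambda_s)$ is exactly the generator action at $X_s$ of an $\R^d$-valued semimartingale whose differential characteristics are $(\beta_s,\gamma_s,\Xi(\lambda_s,\cdot)_\ast N(\lambda_s,\cdot))$. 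I would then compare $\int_0^tLp(\lambda_s)\,ds$ with the predictable finite-variation part of $\phi(X_t)$ obtained from It\^o's formula (expressed through the a priori unknown differential characteristics of $X$), use uniqueness of the semimartingale decomposition, and let $\phi$ range over all polynomials, forcing the characteristics of $X$ to agree $ds\otimes d\P$-a.e.\ with $(\beta_s,\gamma_s,\Xi(\lambda_s,\cdot)_\ast N(\lambda_s,\cdot))$. Concretely, $\phi(x)=x_ix_j$ (equivalently, the It\^o product rule for $\langle\A_i,\lambda_t\rangle\langle\A_j,\lambda_t\rangle$) produces
\[
\widetilde C^{\vec\A}_{t,ij}=\int_0^t\Big(Q(\A_i\otimes\A_j,\lambda_s)+\int_\Scal\langle\A_i,\lambda_s-\z\rangle\langle\A_j,\lambda_s-\z\rangle\,N(\lambda_s,d\z)\Big)ds,
\]
while the monomials $\phi(x)=x_1^{k_1}\cdots x_d^{k_d}$ with $\sum_jk_j\ge3$, handled by induction on the total degree (using the already identified lower-order data to isolate the top-order jump moment), produce
\[
\int\xi_1^{k_1}\cdots\xi_d^{k_d}\,\nu^{\vec\A}_t(dt,d\xi)=dt\int_\Scal\langle\A_1,\z-\lambda_t\rangle^{k_1}\cdots\langle\A_d,\z-\lambda_t\rangle^{k_d}\,N(\lambda_t,d\z).
\]

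\emph{Main obstacle.} The delicate point is the integrability bookkeeping in the last step. For a polynomial $\phi$ of degree $\ge3$ the jump integral $\int(\phi(X_{s-}+x)-\phi(X_{s-})-\nabla\phi(X_{s-})\cdot x)\,F^X_s(dx)$ entering It\^o's formula, and likewise $\int_\Scal\langle\A_i,\z-\lambda_s\rangle^{k}N(\lambda_s,d\z)$, need not be locally integrable a priori, so that the compensator of the jump part — and hence the identification of the predictable finite-variation part — requires justification; this is exactly where the polynomial-growth hypotheses on the moments of $N(\y,d\z)$ are used, through a localization that keeps $\|\lambda_s\|$ under control. The second ingredient, standard but technical, is the passage from ``$\phi(X_t)-\phi(X_0)-\int_0^tLp(\lambda_s)\,ds$ is a local martingale for every polynomial $\phi$'' to the actual identification of the semimartingale characteristics of $X$, in the spirit of Jacod--Shiryaev and Ethier--Kurtz.
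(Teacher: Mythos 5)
Your proposal is correct and follows essentially the same route as the paper: you reduce to monomials $\langle\A,\cdot\rangle^k$, expand $Lp$ via the binomial theorem to exhibit the polynomial of degree $\le k$ agreeing with $Lp$ on $\Scal$, and then identify the characteristics by comparing the martingale problem decomposition \eqref{IIeqnN} with It\^o's formula for polynomials of the coordinate process $(\langle\A_1,\lambda_t\rangle,\ldots,\langle\A_d,\lambda_t\rangle)$, proceeding inductively over the degree. The paper's proof is essentially a two-sentence version of this argument; your write-up simply spells out the first characteristic via linear test functions, the second via $\phi(x)=x_ix_j$, and the jump moments via higher-degree monomials, and flags the (resolvable) integrability bookkeeping where the paper leaves it implicit.
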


\begin{proof}
Setting $p(\y):=\langle \A,\y\rangle^k$ and noting that 
\begin{align*}
Lp(\y) &= 
 k\langle \A,\y\rangle^{k-1}B(\A, \y) + \frac{k(k-1)}2  \Big(Q(\A\otimes \A, \y)+ \int_{\Scal}\langle\A,\z-\y\rangle^2 N(\y,d\z)\Big)\langle \A,\y\rangle^{k-2}\\
 &+\sum_{\ell=3}^k\binom k \ell \langle \A,\y\rangle^{k-\ell}\int_{\Scal}\langle\A,\z-\y\rangle^\ell N(\y,d\z),
\end{align*}
 the first part of the claim follows. For the second part we apply  It\^o's formula to the process $(p(\langle\A_1,\lambda_t\rangle,\ldots,\langle \A_k,\lambda_t\rangle))_{t\geq0}$ for all polynomials $p$.  Note that this is justified since $((\langle\A_1,\lambda_t\rangle,\ldots,\langle \A_k,\lambda_t\rangle))_{t \geq 0}$ is a semimartingale due to \eqref{IIeqnN}.
 Comparing the obtained representation with \eqref{IIeqnN} inductively over the degree of $p$ yields the result.
\end{proof}

\subsection{Finite dimensional setting}
 We present now how our results appear in the finite dimensional setting. Polynomial processes on a finite dimensional space have been characterized in \cite{CKT:12}, see also \cite{FL:16}. The particular example of the Jacobi diffusion with vanishing drift has already been presented in Examples~\ref{ex1}, \ref{ex2}, \ref{ex3}, and \ref{ex4}.

Let $\B=\R^d$, $\Scal\subseteq \R^d$, $D=\B^*=\R^d$, and recall that $\bigoplus_{j=0}^k(\R^d)^{\otimes j}=\R^{N_k}$ where 
$N_k$ denotes the dimension of the space $P_k$ of polynomials on $\R^d$ up to degree $k$.
For simplicity, assume that $\Scal$ contains an open set and thus there is a one to one correspondence between $P_k$ and the space of polynomials on $\Scal$.
Fix then a polynomial operator $L:P\to P$, let $H:=(h_1,\ldots,h_{N_k})^\top$ be a basis of $P_k$, and
let $G_k\in\R^{N_k\times N_k}$ be the unique matrix such that
 $$L p_{\vec \A}(y)= H(y)^\top G_k \vec \A,\qquad\text{for all }\vec\A\in\R^{N_k}$$
where $p_{\vec \A}(y)= H(y)^\top\vec \A$.
With this notation, the $k$-th dual operator $L_k:\R^{N_k}\to\R^{N_k}$ is given by $L_k \vec \A:=G_k \vec \A$ and 
the map $(\vec \A_t)_{t\geq0}$ with $\vec \A_t=e^{tG_k}\vec \aaa$ solves
the system of linear ODEs given by \eqref{eqn12} for 
$
\vec \A_0=\vec \aaa$.
 Since the solution of this system is given by $\vec \A_t=e^{tG_k}\vec \aaa$, the dual moment formula (Theorem \ref{thm1}) leads to the classical moment formula for finite dimensional  polynomial processes
$$\E[p_{\vec \aaa}(\lambda_T)^\top|\Fcal_t]=H(\lambda_t)^\top e^{(T-t)G_k} \vec\aaa.$$
On the other hand, by \eqref{eqn1} we know that  $M_k\vec \y:=G_k^\top \vec \y$ for all $\y\in \R^{N_k}$.  Since
the map $(\vec m_t)_{t\geq0}$ with $\vec m_t=e^{tG_k^\top}H(\lambda_0)$ is in fact the unique solution (and $\Ecal$-weak-solution) of \eqref{momentODE} for $\vec m_0= H(\lambda_0)^\top$, the bidual moment formula (Theorem~\ref{dual2}) yields 
$$\E[H(\lambda_T)|\lambda_0]= e^{TG_k^\top}H(\lambda_0).$$
This result generalizes to 
$\E[H(\lambda_T)|\Fcal_t]= e^{(T-t)G_k^\top}H(\lambda_t),$ as expected.

\subsection{Probability measure-valued setting}
 Probability measure-valued polynomial diffusions have been studied in \cite{CLS:18}. In that paper, the authors also develop conditions under which existence of solutions of the martingale problem are guaranteed.

Let $(\B,\|\fdot\|)$ be the space of  finite signed measures on a Polish space $E$ and let $\|\fdot\|$ denote the total variation norm. Let $\Scal$ be the space of probability measures on $E$ and $D$ be a dense subset of the space of continuous bounded functions $C_b(E)$ on $E$. Note that in this setting
$$\langle \A,\y\rangle=\int \A(x)\y(dx),\qquad\text{for all }\A\in D,\y\in \Scal.$$
Let then $L:P^D\to P$ be a polynomial operator and observe that for each $k\in\N_0$ there is a $k$-th bidual operator $L_k$ admitting the representation given in Remark~\ref{rem3} for some auxiliary operators  $\Lcal_j:D(\Lcal_j)\to  (\B^{\otimes j})^*$. 
As in \cite{CLS:18}  we additionally assume that $\Lcal_j \A\in C_b(E^j)$ for each $\A\in D(\Lcal_j)$.

Observe  that since $\Scal$ is bounded, Remark~\ref{rem7} and Example~\ref{ex11} yield that condition~\ref{itiinew} of Theorem~\ref{thm1} holds true and condition \ref{itii} of the same theorem is implied by \eqref{eqn104}.   Assume now that
there exists a $C_b(E^k)$-valued (classical) solution $(\A_t)_{t\geq0}$  of the $k$-dimensional  PDEs on $[0,T]$ given by
\begin{equation}\label{eqn18}
\partial_t \A_t( x) =  \Lcal_k (\A_t(\fdot))( x), \quad 
\A_0( x)= \aaa( x),
\end{equation}
satisfying condition \eqref{eqn104}. By Theorem~\ref{thm1} we can then conclude that
\begin{align*}
&\mathbb{E}\Big[  \int \aaa(x_1,\ldots,x_k) \lambda_T(dx_1)\ldots\lambda_T(dx_k)  \,\Big|\, \mathcal{F}_t \Big]
=\mathbb{E}[  \langle \aaa, \lambda_T^{\otimes k} \rangle \,|\, \mathcal{F}_t ]\\
&\qquad= \langle \A_{T-t}, \lambda^{\otimes k}_t\rangle
=\int \A_{T-t}(x_1,\ldots,x_k) \lambda_t(dx_1)\ldots\lambda_t(dx_k),
\end{align*}
for each polynomial process $(\lambda_t)_{t\geq0}$ corresponding to $L$.
This result coincides with the conclusion of Theorem~5.3 in \cite{CLS:18}.

As explained in Remark~5.4 in \cite{CLS:18}, equation \eqref{eqn18} can often be seen as the Kolmogorov backward equation corresponding to an $E^k$-valued process $Z^{(k)}$ with generator $\Lcal_k$. If this is the case the process $(m_{t,k})_{t\geq0}$ given by $m_{t,k}:=\E[\lambda_t^{\otimes k}]$ coincides then with the law of $Z_t^{(k)}$ and the equation given in \eqref{momentODE} (formulated  with $\mathcal{M}_k$ as specified in Remark \ref{rem33}) is given by the Kolmogorov forward equation corresponding to $Z^{(k)}$. We propose now a concrete example (see Example~4.4 in \cite{CLS:18} for more details).

\begin{example}[Fleming-Viot]\label{ex7}
Let  $D=C_0^2(\R)$ be the space of twice continuous differentiable functions on $\R$ vanishing at infinity. The Fleming--Viot diffusion $(\lambda_t)_{t\geq0}$ was introduced by \cite{FV:79} and subsequently studied by several other authors (see e.g.~Chapter 10.4 of \cite{EK:05}). This process takes values  in the space  of probability measures on $\R$, again denoted by $\Scal$.

Recall that $\partial p(\y)\in D$ for all $p\in P^D$ and $\y\in\Scal$ which in particular means that $\partial p(\y)$ is a continuous bounded map on $\R$. We denote by $\partial_x p(\y)$ its evaluation at $x\in \R$. Similarly, $\partial^2 p(\y)\in D^{\otimes 2}$ is a $C_0$-map on $\R^2$ and we denote by $\partial_{x_1x_2}^2 p(\y)$ its evaluation at $x_1,x_2\in \R^2$.
The generator $L$ of a Fleming-Viot diffusion acts on polynomials $p\in P^D$ by
\[
Lp(\y) = \langle \Gcal(\partial p(\y) ), \y\rangle + \frac{1}{2} \langle \Psi(\partial^2 p(\y)),\y^{\otimes 2}\rangle, \quad \y\in \Scal,
\]
where $\Gcal:D\to C_0(\R)$ is given by $\Gcal g:=\frac{1}{2} \sigma^2g''$ for some $\sigma\in \R$ and $\Psi:D\otimes D\to C_0(\R^2)$  by $\Psi\A(x_1,x_2)=\frac 1 2 (\A(x_1,x_1)+\A(x_2,x_2)-2\A(x_1,x_2))$. Observe that $L$ is $\Scal$-polynomial. 

Now, using the representation introduced in Remark~\ref{rem3}, we can see that $\Lcal_1\A(x):=\frac{1}{2} \sigma^2\A''(x)=\Gcal\A(x)$ and
$$
 \Lcal_2\A( x):=\frac 12 \sigma^2\Big(\frac{d}{dx_1^2}\A( x)+\frac{d}{dx_2^2}\A( x)\Big)
 + \int \A( x+\xi)-\A(  x)\Nu( x,d \xi),
$$
where $\Nu( x,d \xi)=\frac 1 2 (\delta_{(0,x_1-x_2)}(d \xi)+\delta_{(x_2-x_1,0)}(d \xi))$. 

One can see that $\Lcal_1$ coincides with the generator of the real valued diffusion $Z^{(1)}$ given by $Z_t^{(1)}=\sigma W_t$ where $(W_t)_{t\geq0}$ denotes a Brownian motion. Moreover, equation \eqref{eqn18}, which reads 
$$\partial_t\A_t(x)=\frac 1 2 \sigma^2\A_t''(x),\qquad \A_0(x)= \aaa(x),$$
coincides with the corresponding Kolmogorov backward equation and  thus with the well known heat equation.
Since it is solved by $(t,x)\mapsto \A_t(x):=\E[ \aaa(Z_t^{(1)})|Z_0^{(1)}=x]$, Theorem~\ref{thm1}  yields
$$\E[\int  \aaa(x)\lambda_T(dx)|\Fcal_t]=\E[ \aaa(Z_{T-t}^{(1)})|Z_0^{(1)}\sim \lambda_t].$$
On the other hand one can see that under the ansatz that $m_{t,1}(dx)=f_t(x)dx$ we have that
$$\int \Lcal_1 \A(x)f_t(x)dx=\int\frac 1 2 \sigma^2 \A''(x)f_t(x)dx=\int\A(x)\frac 1 2 \sigma^2 f''_t(x)dx
,\qquad \text{for all }\A\in C^2_0(\R)$$
showing that the first bidual operator is given by $\Mcal_1(f_t(x)dx)=\frac 1 2 \sigma^2 (f_t''(x)dx)$. By Theorem~\ref{dual2} we can thus conclude that $t\mapsto\E[\lambda_t]:=f_t(x)dx$ satisfies
$$\partial_tf_t(x)=\frac 1 2 \sigma^2f_t''(x),\qquad f_0(x)dx=\lambda_0$$
which, as expected, coincides with the Kolmogorov forward equation for $Z^{(1)}$. Since the conditions of Corollary~\ref{cor1} are satisfied and $f_t(x)dx=\P(Z_t^{(1)}\in\fdot|Z_0^{(1)}=\lambda_0)$ solves the given PDE, we can conclude that
$\E[\lambda_t]=\P(Z_t^{(1)}\in\fdot|Z_0^{(1)}=\lambda_0)$. \\
Let us now focus on $\Lcal_2$. This operator coincides with the generator of a jump-diffusion $(Z_t^{(2)})_{t\geq0}$ taking values in $\mathbb{R}^2$. Between two jumps this process moves like $(\sigma W_t)_{t\geq0}$ where $(W_t)_{t\geq0}$ denotes a 2 dimensional Brownian motion. When a jump occurs, after an exponential time with intensity 1, the process jumps either vertically or horizontally to the diagonal. 
Again, equation \eqref{eqn18} coincides with the Kolmogorov backward equation corresponding to $Z^{(2)}$ and the corresponding solution is given by $(t,x)\mapsto \A_t(x):=\E[ \aaa(Z_t^{(2)})|Z_0^{(2)}=x]$. Theorem~\ref{thm1} then yields
$$\E[\int  \aaa(x_1,x_2)\lambda_T(dx_1)\lambda_T(dx_2)|\Fcal_t]=\E[ \aaa(Z_{T-t}^{(2)})|Z_0^{(2)}\sim \lambda_t\otimes\lambda_t].$$
Proceeding as before we can use Corollary~\ref{cor1} to conclude that the only probability measure $m_{t,2}(dx):=f_t(x)dx$  supported on $\mathbb{R}^2$ satisfying the forward Kolmogorov equation for $Z^{(2)}$ 
$$
\partial_tf_t(x)=\frac 1 2 \sigma^2 \Delta f_t(x)+\frac{1}{2}\int f_t(x)(\delta_{x_2}(dx_1)+\delta_{x_1}(dx_2))-f_t(x))
$$
is given by $m^2_t=\E[\lambda_t\otimes\lambda_t]=\P(Z_t^{(2)}\in\fdot|Z_0^{(2)}\sim \lambda_0\otimes\lambda_0)$.

\end{example}
As a final remark, observe that in the case of Example~\ref{ex7}  the bidual system of ODEs could be formulated using a \emph{relatively strong} formulation. It is however well-known (see for instance \cite{F:08}) that forward Kolmogorov equations can be treated using a weak formulation, consistently with the notion of solution used in Theorem~\ref{dual2}.

\subsection{Polynomial forward variance curve models} \label{sec43}

This section is dedicated to introduce \emph{polynomial forward variance curve models}. The main motivation for this class of models stems from the rather new paradigm of rough volatility (see e.g.,~\cite{ALV:07, GJR:18, BFG:16}). Rough volatility or rough variance is usually introduced via stochastic Volterra processes with singular kernels (e.g., \cite{ALP:17, ACLP:19}).
These processes are non-Markovian, but the Markovian structure can be established by lifting them to infinite dimensions (see \cite{CT:18, CT:19}). One such lift is the forward variance curve, i.e. one considers the curve $x \mapsto \lambda_t(x):=\mathbb{E}[V_{t+x}| \mathcal{F}_t]$ with $(V_t)_{t \geq 0}$ being the (rough) spot variance and $x$ the time to maturity which corresponds to the so-called Musiela parametrization.

Forward variance curve models of course have a longer history and do not just date back to the introduction of rough volatility. Indeed \cite{B:04, B:05, B:08}  proposed them to achieve a market consistent forward skew which cannot be reproduced by traditional stochastic volatility models even with jumps. We refer also to related work by~\cite{B:06}. Instead of modeling the spot volatility or variance, the idea is to specify the dynamics of 
of the forward variance curve, similarly to the Heath-Jarrow-Morton framework in interest rate theory. Due to the martingale property of $(\mathbb{E}[V_T|\mathcal{F}_t])_{t \leq T}$, the dynamics of the forward curve process  $(\lambda_t)_{t \geq 0}$ are necessarily of the form 
\begin{align}\label{eq:polyforward}
d\lambda_t(x)=\Acal\lambda_t(x)dt + dM_t,
\end{align}
for some general function space valued martingale $(M_t)_{t\geq0}$ that  we shall specify as polynomial process.
The $dt$ term of $(\lambda_t)_{t \geq 0}$ is necessarily the first (space) derivative and thus corresponds to the generator of the shift semigroup. In order to make the shift semigroup strongly continuous such that we can treat the above SPDE by standard theory we shall work with the following Hilbert space of forward curves introduced by \cite{F:01}.

Let $\alpha:\R_+\to[1,\infty)$ be a nondecreasing $C^1$-function such that $\alpha^{-1}\in L^1(\R_+)$. Set then
$$\B=\B^*=\{\y\in AC(\R_+,\R) : \|y\|_\alpha<\infty\},$$
 where 
 $AC(\R_+,\R)$ denote the space of absolutely continuous functions from $\R_+$ to $\R$ and 
$\|\y\|_\alpha^2:=|\y(0)|^2+\int_0^\infty|y'(x)|^2\alpha(x)dx$. By Theorem~5.1.1 in \cite{F:01} we know that $\B$ is  a Hilbert space with respect to the scalar product
$$\langle \A,\y\rangle_\alpha:=\A(0)\y(0)+\int_0^\infty \A'(x)\y'(x)\alpha(x)dx.$$
Moreover, by Lemma~3.2 in \cite{BK:14} we also know that $\B\subseteq \R+C_0(\R_+)$, namely the space of continuous functions with continuous continuation to infinity.

In order to simplify the computations we consider here -- instead of norm induced by this scalar product -- the symmetric projective norm
$$\|\y\|_\times:=\inf\Big\{\sum_{i=1}^n|\alpha_i|\|\y_i\|_\alpha^k\colon \y=\sum_{i=1}^n\alpha_i \y_i^{\otimes k}\Big\},\qquad \y\in \B^{\otimes k}.$$
Note that since $\B$ is an Hilbert space, by \cite{F:97} (or also \cite{J:18}) this norm coincides with the projective tensor norm in sense of \cite{R:13} and is thus a crossnorm.
This choice is particularly convenient since in order to check that $\|\A\|_{*k}\leq C$ for some $\A\in (\B^{\otimes k})^*$ it is enough to verify that \begin{equation}\label{eqn8}
|\A(\y^{\otimes k})|\leq C\|\y\|_\alpha^k\qquad\text{ for each }\qquad\y\in \B.
\end{equation}
 Similarly, condition \eqref{eqn8} is enough for checking that a linear map $\A$ belongs to $(\B^{\otimes k})^*$. Finally,
recall that the projective norm  is the largest cross norm (see Proposition 6.1 in \cite{R:13}). This in particular implies that the space of coefficients obtained considering the projective norm
is larger than the space of coefficients obtained considering any other crossnorm.

Our goal is now to cast \eqref{eq:polyforward} in the polynomial framework. Consider the operator 
\begin{equation}\label{eqn5}
\mathcal{A}: \operatorname{dom}( \mathcal{A})\to \B,\qquad \Acal\y:=\y',
\end{equation}
where $ \operatorname{dom}( \mathcal{A}):=\{\y\in \B\colon \Acal \y\in \B\}$.
In order to define an appropriate set $D$ of coefficients, we have to make sure that the adjoint of $\Acal$,
denoted by $\mathcal{A}^*$, is well defined on $D$. Let therefore
$$\operatorname{dom}(\mathcal{A}^*):=\left\{
\A\in\B \colon \exists C \geq 0 \text{ s.t.~}   \left|\left\langle \A, \mathcal{A} y\right\rangle_\alpha\right|
\leq C \|\y\|_\alpha\text{ for all }\y\in \operatorname{dom}(\mathcal{A})\right\},$$
and  define the adjoint  $\Acal^*:\operatorname{dom}( \mathcal{A^*})\to\B$ 
as usual
as the linear operator that is uniquely determined by
$$\langle \Acal^*\A,\y\rangle_\alpha
=\langle \A,\Acal y\rangle_\alpha,\qquad \A\in\operatorname{dom}( \mathcal{A}^*), \y\in \operatorname{dom}( \mathcal{A}) .$$

Fix then $D\subseteq \operatorname{dom}( \mathcal{A}^*)$ and  let $(\lambda_t)_{t\geq0}$ be a polynomial diffusion corresponding to the linear operator $L:P^D\to P$ given by
\begin{align}\label{eq:polyoperator}
Lp(\y):=\langle \Acal^*(\partial p(\y)),\y\rangle_\alpha+  \frac 1 2 \sum_{i=0}^2\langle Q^i(\partial^2p(\y)),\y^{\otimes i}\rangle_\alpha.
\end{align}
for some linear operators $Q^i:D\otimes D\to(\B^{\otimes i})^*$.

In the next lemma we establish the connection between forward variance curve models as given in \eqref{eq:polyforward}
and such polynomial diffusions.

\begin{lemma}\label{lem3}

Let
$(M_t)_{t\geq0}$ be a $\B$-valued square integrable continuous martingale. 
Let $(\lambda_t)_{t\geq0}$ be an analytically (and also probabilistically) weak solution of the SPDE given by
\begin{align}\label{eq:SPDE}
d\lambda_t=\Acal\lambda_tdt+dM_t,
\end{align}
i.e.~$
\langle \A, \lambda_t \rangle_{\alpha}=\langle \A,\lambda_0\rangle_{\alpha} + \int_0^t \langle \mathcal{A}^* \A , \lambda_s \rangle_{\alpha} dt+\langle \A, M_t \rangle _{\alpha},
$
for each $a\in D \subseteq \operatorname{dom}(\mathcal{A}^*)$.
 Suppose that the dynamics of the quadratic variation process are given by  $$d[\langle \A,\lambda_\cdot\rangle_\alpha,\langle \A,\lambda_\cdot\rangle_\alpha]_t=[\langle \A,M_\cdot \rangle_\alpha,\langle \A,M_\cdot\rangle_\alpha]_t
=\sum_{i=0}^2\langle Q^i(\A\otimes \A),\lambda_t^{\otimes i}\rangle_\alpha dt.
$$ 
Then $(\lambda_t)_{t\geq0}$ is a polynomial process corresponding to $L$ given in \eqref{eq:polyoperator}.
\end{lemma}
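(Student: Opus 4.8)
The plan is to check the two requirements for $(\lambda_t)_{t\geq0}$ to be an $\Scal$-valued polynomial process corresponding to $L$: that the operator $L$ of \eqref{eq:polyoperator} is $\Scal$-polynomial, and that $(\lambda_t)_{t\geq0}$ solves the martingale problem for $L$. Throughout, write a generic $p\in P^D$ in cylindrical form $p(\y)=\phi(\langle \A_1,\y\rangle_\alpha,\ldots,\langle \A_d,\y\rangle_\alpha)$ with $d\in\N$, $\A_1,\ldots,\A_d\in D$ and $\phi\colon\R^d\to\R$ a polynomial, and put $\Xi(\y):=(\langle \A_1,\y\rangle_\alpha,\ldots,\langle \A_d,\y\rangle_\alpha)\in\R^d$, so that $p(\y)=\phi(\Xi(\y))$. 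Inserting the Fr\'echet chain rule \eqref{eqn20} for $\partial p(\y)$ and $\partial^2 p(\y)$ into \eqref{eq:polyoperator} and using the linearity of $\Acal^*$, of the $Q^l$ and of $\langle \fdot,\fdot\rangle_\alpha$ yields
\begin{align*}
Lp(\y) &= \sum_{i=1}^d\phi_i(\Xi(\y))\,\langle \Acal^*\A_i,\y\rangle_\alpha \\
&\quad + \frac12\sum_{l=0}^2\sum_{i,j=1}^d\phi_{ij}(\Xi(\y))\,\langle Q^l(\A_i\otimes \A_j),\y^{\otimes l}\rangle_\alpha ,
\end{align*}
where $\phi_i,\phi_{ij}$ denote the first and second partial derivatives of $\phi$ and $Q^0,Q^1,Q^2$ are the operators appearing in \eqref{eq:polyoperator}. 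Choosing a representation with $\deg\phi=\deg p$, each summand is a polynomial in $\y$ of degree at most $\deg p$ (a factor of degree $\le\deg\phi-1$ times a degree-$1$ term in the first sum; a factor of degree $\le\deg\phi-2$ times a degree-$l\le2$ term in the second), hence $\deg(Lp)\le\deg(p)$ and $L$ is $\Scal$-polynomial.

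For the martingale problem, fix $p$ as above and set $X_t:=\Xi(\lambda_t)$. The analytically weak form of \eqref{eq:SPDE} says exactly that, for $i=1,\ldots,d$,
\begin{equation*}
X^i_t=\langle \A_i,\lambda_0\rangle_\alpha+\int_0^t\langle \Acal^*\A_i,\lambda_s\rangle_\alpha\,ds+\langle \A_i,M_t\rangle_\alpha ,
\end{equation*}
so $X$ is an $\R^d$-valued continuous semimartingale with (absolutely continuous) finite variation part given by the first integral and local martingale part $(\langle \A_i,M_t\rangle_\alpha)_i$, which is in fact a square integrable martingale because $M$ is. I would then polarize the assumed quadratic variation identity: since $X^i\pm X^j=\langle \A_i\pm \A_j,\lambda_\cdot\rangle_\alpha$ with $\A_i\pm \A_j\in D$ ($D$ being a linear subspace), applying the hypothesis to $\A=\A_i\pm \A_j$, using $[X^i,X^j]=\tfrac14\big([X^i+X^j,X^i+X^j]-[X^i-X^j,X^i-X^j]\big)$, the symmetric-tensor identity $(\A_i+\A_j)^{\otimes 2}-(\A_i-\A_j)^{\otimes 2}=4\,\A_i\otimes \A_j$ and the linearity of $Q^l$, one obtains $d[X^i,X^j]_t=\sum_{l=0}^2\langle Q^l(\A_i\otimes \A_j),\lambda_t^{\otimes l}\rangle_\alpha\,dt$.

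Now It\^o's formula applied to $p(\lambda_t)=\phi(X_t)$ splits $\phi(X_t)-\phi(X_0)$ into the local martingale $\sum_{i=1}^d\int_0^t\phi_i(X_s)\,d\langle \A_i,M_s\rangle_\alpha$ plus a finite variation part which, after substituting the drift of $X^i$ and the covariation just computed, equals $\int_0^t Lp(\lambda_s)\,ds$ by the displayed formula for $Lp$ (note $\Xi(\lambda_s)=X_s$). Hence the process $N^p$ of \eqref{IIeqnN} coincides with the local martingale $\sum_{i=1}^d\int_0^t\phi_i(X_s)\,d\langle \A_i,M_s\rangle_\alpha$. Since $\lambda$ has c\`adl\`ag (indeed continuous) paths in $\B$ -- which is standard for the linear SPDE \eqref{eq:SPDE} driven by a continuous martingale -- and $\Acal^*\A_i\in\B=\B^*$, $Q^l(\A_i\otimes \A_j)\in(\B^{\otimes l})^*$, both $p(\lambda_\cdot)=\phi(X_\cdot)$ and $Lp(\lambda_\cdot)$ are c\`adl\`ag; together with $\lambda_0=\y_0$ this gives that $(\lambda_t)_{t\geq0}$ solves the martingale problem for $L$, and is therefore an $\Scal$-valued polynomial process corresponding to $L$.

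The substantive step is the identification of the It\^o drift with $\int_0^t Lp(\lambda_s)\,ds$, which rests on the Fr\'echet chain rule \eqref{eqn20} and the linearity of $\Acal^*$ and of the $Q^l$. The remaining work is bookkeeping: checking that the polarization of the quadratic covariation is compatible with the symmetric tensor product, and that the path regularity, adaptedness and local boundedness needed to apply It\^o's formula and to read off the local martingale property are available -- all of which follow from the continuity and square integrability of $M$ and from standard theory for \eqref{eq:SPDE}.
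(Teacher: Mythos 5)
Your proof is correct and follows essentially the same route as the paper's, which is a one-line application of It\^o's formula to $p(\lambda_t)=\phi(\Xi(\lambda_t))$. You fill in the steps the paper leaves implicit — the Fr\'echet chain-rule expansion of $Lp$, the polarization of the quadratic covariation to pass from $\A\otimes\A$ to $\A_i\otimes\A_j$, and the identification of the It\^o drift with $\int_0^t Lp(\lambda_s)\,ds$ — and you additionally verify explicitly that $L$ is $\Scal$-polynomial, a point the paper takes for granted.
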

\begin{proof}
We have to prove that $(\lambda_t)_{t \geq 0}$ is a solution to the martingale problem for the polynomial operator $L$. The existence of a c\`adl\`ag version of $t \mapsto p(\lambda_t)$ and $t \mapsto Lp(\lambda_t)$ for $p\in P^D$ is clear since $M$ is continuous.
Moreover, by It\^o's formula
$N^{p}$ as of \eqref{IIeqnN} is a martingale for every $p \in P^D$, which proves the assertion.
\end{proof}

\begin{remark}\phantomsection\label{rem:mildconcept}
\begin{enumerate}
\item\label{it1i}
By resorting to a mild solution of \eqref{eq:SPDE}, we can weaken the assumptions on $M$ to allow for instance for 
\begin{align}\label{eq:roughmartingale}
M_t(x)=\int_0^t K(x)\lambda_s(x) dW_s,
\end{align}
where $K \in L^2_{\text{loc}}(\R_+)$ is a fractional kernel $K(t)\approx t^{\alpha}$ for $\alpha \in (-\frac{1}{2}, 0)$ having a singularity at $0$ so that $M$ is not an element in $B$. This form of $M$ is an important example for rough volatility modeling as we will see in Sections \ref{ex6} and \ref{ex5} below.
By denoting the shift semigroup by $(S_t)_{t \geq 0}$, a weakly mild solution of  \eqref{eq:SPDE} is given by
\[
\langle \A, \lambda_t \rangle_{\alpha}=\langle \A,S_t\lambda_0\rangle_{\alpha} +\int_0^t \langle \A, S_{t-s}dM_s \rangle _{\alpha}, \quad \A \in B,
\]
where it is just required that $\int_0^t \langle \A, S_{t-s}dM_s \rangle _{\alpha}$ is well-defined. This is for instance the case if we consider the example given in \eqref{eq:roughmartingale} where we get
$$\int_0^t \langle \A, S_{t-s}dM_s \rangle _{\alpha}=\int_0^t \langle \A, K(t-s+\fdot)\lambda_t(t-s+\fdot) \rangle _{\alpha}dW_s.$$

\item \label{rem:fractional} 
One can however still consider a weak solution concept when restricting the set $D \subseteq\operatorname{dom}(\mathcal{A}^*)$ to elements $\A \in D$ for which we can make sense out of $\langle \A, M_t\rangle_{\alpha}$, even if $M_t$ is not in $B$. In order to deal with kernels $K$ with a singularity at $0$ as in \eqref{eq:roughmartingale}, we shall introduce some new notation.  We let $B_z:=\{\y:S_z\y \in\B\}$ and define $\widetilde \B :=\bigcap_{z>0}\B_z$. For $\A\in \B$ and $K\in\widetilde \B$ set 
$\langle\langle \A,K\rangle\rangle_\alpha:=\lim_{z\to\infty}\langle \A,K(\fdot+z)\rangle_{\alpha},$
whenever the limit exists and $\langle\langle \A,K\rangle\rangle_\alpha:=\infty$ if the limit does not exists. Observe in particular that $\langle\langle \A,\y\rangle\rangle_{\alpha}=\langle \A,\y\rangle_{\alpha}$ for each $\y\in \B$.

\item\label{it1iii}
  Weakly mild solutions are then actually weak solutions when restricting the set $D \subseteq\operatorname{dom}(\mathcal{A}^*)$ to elements $\A \in D$ for which $\langle\langle  \A, M_t \rangle \rangle_{\alpha}$ is well-defined (see for instance Example \ref{ex6} below). More precisely, a weakly mild solution $(\lambda_t)_{t\geq0}$ of \eqref{eq:SPDE} satisfies
 \[
\langle \A, \lambda_t \rangle_{\alpha}=\langle \A,\lambda_0\rangle_{\alpha} + \int_0^t \langle \mathcal{A}^* \A , \lambda_s \rangle_{\alpha} dt+\langle\langle \A, M_t \rangle\rangle _{\alpha}.
\]
 This follows e.g. from the results in~\cite[Proposition 6.3]{Kunze:13}. Hence, a weakly mild solution solves the martingale problem for such a restricted set $D$.
\end{enumerate}
\end{remark}

\subsubsection{Moments of the VIX-Index}\label{sec:vix}

As concrete application of the moment formula in the case of polynomial forward variance curve models we have pricing of VIX options in mind.
Define the VIX at time $t$
via the continuous time monitoring formula
\[
VIX_t^2=
\frac{1}{\Delta}\int_0^{\Delta} \lambda_t(x) dx,
\]
where $\Delta$ is typically 30 days (see e.g.~\cite{HJT:18}). 
This is clearly a linear functional of $\lambda$, explicitly
\begin{align*}
VIX_t^2=\langle \widehat\A,\lambda_t\rangle_\alpha
\end{align*}
for  $\widehat a(x):=1+\frac 1 \Delta \int_0^x(\Delta-\Delta\land z)\alpha(z)^{-1}dz$, which lies in $\operatorname{dom}(\mathcal{A}^*)$ and also in the
 the subsets $D\subseteq \operatorname{dom}(\mathcal{A}^*)$ that we shall consider below. 
 The risk neutral valuation formula for an option on VIX with payoff $\phi$ (for the VIX future $\phi(x) =\sqrt{x}$)
is then 
\begin{equation}\label{eqn201}
\mathbb{E}\left[ \phi(VIX^2_t) \right]= \mathbb{E}\left[ \phi (\frac{1}{\Delta}\int_0^{\Delta} \lambda_t(x) dx) \right]= \E[\phi(\langle \widehat \A,\lambda_t\rangle_\alpha)].
\end{equation}
Modulo technicalities, the polynomial property of $(\lambda_t)_{t\geq0}$ implies that for $\phi(x)=x^k$ the expression in \eqref{eqn201} can be computed by solving a system of infinite dimensional linear ODEs.
Below we shall analyze two concrete specifications of polynomial forward variance models, namely the (rough) Bergomi model (see Section \ref{ex6}) and a polynomial Volterra model (see Section \ref{ex5}), where these technical conditions are satisfied. In both cases we give an explicit formula (up to a Lebesgue integration on $\mathbb{R}^k$) for the $2k$-th moment of the VIX.
The following gives a road map for our proof strategy, following the idea explained in Remark~\ref{rem8}. For $\widehat \A$, we write
\begin{equation}\label{eqn14}
 \langle\widehat \A^{\otimes k},f\rangle_\alpha:=\frac 1 {\Delta^k}\int_{[0,\Delta]^k}f(x)dx
\end{equation}
for each $f:\R_+^k\to\R$ such that \eqref{eqn14} is well-defined. Note that this defines a  pairing on the tensor products only for the specific element $\widehat \A^{\otimes k}$ which then coincides with $\langle\langle \widehat \A,\y\rangle\rangle_\alpha^k$ for $f:=\y^{\otimes k}$ and $\y\in \widetilde \B$, and also with $\langle \widehat \A,\y\rangle_\alpha^k$ for $f:=\y^{\otimes k}$ and $\y\in  \B$.
\begin{enumerate}
\item In both examples, the polynomial operators are homogenous, i.e., it maps homogeneous polynomials of degree $k$ to  homogeneous polynomials of degree $k$. Therefore we can express the $n$-th bidual operator $M_n$ as  $M_n\vec y=(\Mcal_0\y_0,\ldots,\Mcal_n\y_n)$ (see Remark~\ref{rem33} for the $\Mcal$ notation).

\item 
We observe that for each $\y\in (\operatorname{dom}(\mathcal{A}))^{\otimes k}$ the corresponding PDE 
\begin{equation}\label{eqn204}
\partial_tm_t=\Mcal_k m_t,\qquad m_0=\y
\end{equation}
 has a classical strong solution on $E^k\times [0,T]$ for some $E\subseteq \R_+$ such that $E\cap[0,\Delta+T]$ has full Lebesgue mass. We denote it by $(Z_t\y)_{t\geq0}$ for some $Z_t\y:E^k\to \R$ such that \eqref{eqn14} is finite and
\begin{align} \label{eq:linfunc}
 \langle\widehat \A^{\otimes k},Z_t\y\rangle_\alpha\leq c_t \|y\|_\times
 \end{align}
  for some $t$-dependent constant $c_t$ not depending on $\y$. 
  
 In particular, \eqref{eqn204} corresponds to a Cauchy problem associated to an $\mathbb{R}^k$-dimensional Markov process and $Z_ty$ is the unique classical solution thereof.
It is however not clear at that point that we have weak solutions in the sense of Definition~\ref{def:solweak} and that the conditions of Corollary~\ref{cor11} are satisfied for $\Hcal=
\{\widehat{a}^{\otimes k}\}$.  Therefore we cannot directly conclude that $\langle \widehat a^{\otimes k} , Z_ty \rangle_{\alpha}= \mathbb{E}[VIX^{2k}]$.
 \item The next step consist in using $(Z_ty)_{t\ge0}$ to construct an ansatz for the
  solution $(a_t)_{t \geq 0}$ of the dual system of ODEs. 
  Due to \eqref{eq:linfunc} we can define $\A_t\in (\B^{\otimes k})^*$ as $\langle \A_t,\y\rangle_\alpha:=\langle\widehat \A^{\otimes k},Z_t\y\rangle_\alpha$ for each $\y\in \B^{\otimes k}$. 
Here, $\A_t$ denotes a candidate solution (in the sense of Definition \ref{def:sol}) of $\partial_t\A_t=\Lcal_k\A_t$ for $\A_0=\widehat \A^{\otimes k}$. 
\item Finally, we verify that $(\A_t)_{t\geq0}$ satisfies the conditions of the dual moment formula in Theorem \ref{thm1} and we can thus conclude that
$$\E[ (VIX_t^2)^k|\lambda_0=\y]=\E[\langle \widehat a^{\otimes k},\lambda_t^{\otimes k}\rangle_\alpha|\lambda_0=\y]=\langle \A_t,\y^{\otimes k}\rangle_\alpha=\langle\widehat \A^{\otimes k},Z_t\y^{\otimes k}\rangle_\alpha.$$
\end{enumerate}

We shall now specify the two concrete examples and deploy the above program for the computation of the VIX-moments.

\subsubsection{The (rough) Bergomi model and its VIX moments}\label{ex6}

 The first example corresponds to the Bergomi model, either in its rough form (e.g., \cite{BFG:16, HJT:18}) or in the original form depending on the choice of the kernel.

\paragraph{Model specification}
Recall the notation of Remark~\ref{rem:mildconcept}\ref{rem:fractional}. Let $K_{\ell} \in L^2_{\text{loc}}$, $\ell=1, \ldots, m$, denote some (potentially fractional) kernels such that $K_{\ell}\in \widetilde{\B}$ and 
\begin{equation}\label{eqn209}
\sum_{\ell=1}^m\int_0^T\sup_{t\in[0,T]} K_{\ell}(x+t)^2dx<\infty
\end{equation}
for each $T>0$.
Define
\[
D:= \Big\{\A \in \operatorname{dom}(\mathcal{A}^*)\colon \exists C \geq 0 \text{ s.t. } \sum_{\ell=1}^m|\langle\langle \A,K_\ell\y\rangle\rangle_\alpha|\leq C\|\y\|_\alpha\text{ for all } y\in B \Big\}.
\]
Consider \eqref{eq:polyoperator} with
$Q^0=0$, $Q^1=0$, and $Q^2:D\otimes D\to(\B\otimes \B)^*$ being uniquely determined by
$\langle Q^2 (\A\otimes \A),\y\otimes \y\rangle_\alpha=\sum_{\ell=1}^m\langle\langle \A,K_\ell\y\rangle\rangle_\alpha^2,$  for $\A\in D$ and $\y\in B.$
The corresponding SPDE \eqref{eq:SPDE} can then be realized as follows
\begin{align}\label{eq:BergomiSPDE}
d\lambda_t(x) =  \Acal\lambda_t(x)dt+\sum_{\ell=1}^m K_\ell(x) \lambda_t (x) dB_t^\ell,
\end{align}
where $B^1,\ldots,B^m$ are $m$ independent Brownian motions. Even though we here do not speak about existence of solutions to this equation,  the solution concept that we have in mind is a mild one as outlined in Remark \ref{rem:mildconcept}\ref{it1i}. 
Note in particular that for $a \in B$ 
\[
\int_0^t \langle a , S_{t-s} dM_s\rangle_{\alpha}=  \sum_{\ell=1}^m \int_0^t\langle a , S_{t-s}  K_\ell \lambda_s  dB_s^\ell\rangle_{\alpha}=
 \sum_{\ell=1}^m \int_0^t\langle a , K_\ell(t-s+\cdot) \lambda_s(t-s+\cdot)  \rangle_{\alpha}dB_s^\ell
\]
is well defined since $K_\ell(t-s+\cdot) \lambda_t(t-s+\cdot)$ takes values in $B$ for all $0 \leq  s < t$. To see that this setting includes the (rough) Bergomi model, compare for instance with \cite{HJT:18}. 
The corresponding $k$-th dual operator satisfies the condition of Remark~\ref{rem3} and can thus be written as
$L_k\vec \A=(\Lcal_0\A_0,\ldots, \Lcal_k\A_k)$, where
$$
\langle\Lcal_j\A^{\otimes j},\y^{\otimes j}\rangle_\alpha:=j\langle\Acal^*\A,\y\rangle_\alpha\langle\A,y\rangle_\alpha^{j-1}
+\frac {j(j-1)}2
 \sum_{\ell=1}^m\langle\langle \A,K_\ell\y\rangle\rangle_\alpha^2\langle \A,\y\rangle_\alpha^{j-2}.
 $$
 
\paragraph{VIX-moments}

We now consider the computation of the moments of the VIX 
in these models. An explicit formula is given the following proposition, whose proof is postponed to Section~\ref{secB}. Recall the notion of a weakly mild solution from Remark~\ref{rem:mildconcept}\ref{it1i}.

\begin{proposition}\label{lem7}
Let $(\lambda_t)_{t \geq 0}$ be a weakly mild solution of \eqref{eq:BergomiSPDE} and let
$k \in \mathbb{N}$. Assume that $\E[\sup_{t\leq T}\|\lambda_t\|_\alpha^{2k}]<\infty$ and set $
V_k(x)=\sum_{\ell=1}^m \sum_{i<j}K_\ell(x_i)K_\ell(x_j)
$ for $x=(x_1, \ldots, x_k)$. 
Then for each $\lambda_0\in \B$ we have
$$\mathbb{E}\left[ (VIX^2_t)^k|\lambda_0 \right]
=\frac 1 {\Delta^k}\int_{[0,\Delta]^k}\prod_{i=1}^k\lambda_0(x_i+t)e^{\int_0^tV_k( x+\tau1)d\tau}d x,
$$
where   $1$ denotes the vector consisting of ones.
\end{proposition}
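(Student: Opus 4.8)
The plan is to carry out the four–step programme sketched after \eqref{eqn14} for the operators of Section~\ref{ex6}; the reason for this indirect route, rather than reading the moment directly off the bidual formula of Theorem~\ref{dual2}, is that $\Ecal$–weak uniqueness of \eqref{momentODE} (the hypothesis of Corollary~\ref{cor11}) is not available a priori. \textbf{Step 1: the bidual operator.} Since the operator $L$ of \eqref{eq:polyoperator} with $Q^0=Q^1=0$ is homogeneous, Remark~\ref{rem33} gives $M_k\vec y=(\Mcal_0\y_0,\dots,\Mcal_k\y_k)$ with $\Mcal_j$ the adjoint of the auxiliary operator $\Lcal_j$ recorded just before the Proposition. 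Evaluating the adjointness $\langle\Lcal_j\A^{\otimes j},\y^{\otimes j}\rangle_\alpha=\langle\A^{\otimes j},\Mcal_j\y^{\otimes j}\rangle_\alpha$ on pure tensors with $\y\in\operatorname{dom}(\Acal)$, using $\langle\Acal^*\A,\y\rangle_\alpha=\langle\A,\y'\rangle_\alpha$, the product rule, the identity $\binom j2=\tfrac{j(j-1)}2$, and the pairing \eqref{eqn14}, one identifies $\Mcal_k$, on sufficiently smooth $f\colon\R_+^k\to\R$, as the first–order operator $\Mcal_k f(x)=\sum_{i=1}^k\partial_{x_i}f(x)+V_k(x)f(x)$ with $V_k$ as in the statement.

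\textbf{Steps 2--3: explicit solution of \eqref{eqn204} and the dual ansatz.} The Cauchy problem $\partial_t m_t=\Mcal_k m_t$, $m_0=\y^{\otimes k}$, is a linear transport equation with a zeroth–order potential; Duhamel's formula against the $k$–fold shift semigroup $f\mapsto f(\fdot+t\mathbf 1)$ (equivalently, characteristics along $\tau\mapsto x+\tau\mathbf 1$) gives the classical solution
\[
Z_t\y^{\otimes k}(x)=\prod_{i=1}^k\y(x_i+t)\,\exp\!\Big(\textstyle\int_0^t V_k(x+\tau\mathbf 1)\,d\tau\Big),
\]
as one checks directly from $\int_0^t\sum_i\partial_{x_i}V_k(x+\tau\mathbf 1)\,d\tau=V_k(x+t\mathbf 1)-V_k(x)$. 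On $[0,\Delta]^k$ and for $t\le T$ the exponential weight is bounded by a constant depending only on $T,\Delta$ (here $K_\ell\in L^2_{\mathrm{loc}}$, resp.\ \eqref{eqn209}, controls $\int_0^t V_k(x+\tau\mathbf 1)\,d\tau$ after a Cauchy--Schwarz), which yields \eqref{eq:linfunc}. Hence $\langle\A_t,\y\rangle_\alpha:=\langle\widehat\A^{\otimes k},Z_t\y\rangle_\alpha$, defined first on the dense subspace $(\operatorname{dom}\Acal)^{\otimes k}$ and extended by continuity, lies in $(\B^{\otimes k})^*$ by the projective–norm criterion \eqref{eqn8}, with $\A_0=\widehat\A^{\otimes k}\in D^{\otimes k}$. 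One then verifies that $(\A_t)_{t\ge0}$ is an $\overline\Scal_k$–solution of the dual ODE $\partial_t\A_t=L_k\A_t$ in the sense of Definition~\ref{def:sol}: differentiating $t\mapsto\langle\A_t,\y^{\otimes k}\rangle_\alpha$ under the finite integral over $[0,\Delta]^k$ and using Step~1 together with the semigroup identity $Z_t\Mcal_k=\Mcal_k Z_t$ gives $\partial_t\langle\A_t,\y^{\otimes k}\rangle_\alpha=\langle\widehat\A^{\otimes k},Z_t\Mcal_k\y^{\otimes k}\rangle_\alpha=\langle\A_t,\Mcal_k\y^{\otimes k}\rangle_\alpha=\langle L_k\A_t,\y^{\otimes k}\rangle_\alpha$, the last equality by the adjointness of Lemma~\ref{lem1} (valid as $\overline\y\in\Dcal_k$ for the relevant $\y$); integrating in $t$ yields the required identity, and en route that $\A_t\in\Dcal(L_k)$.

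\textbf{Step 4: dual moment formula.} We check the hypotheses of Theorem~\ref{thm1} with $\vec a=\widehat\A^{\otimes k}$: \ref{iti} is Step~3, while \ref{itiinew} and \ref{itii} follow from Lemma~\ref{lem:sufficient}, since $\E[\sup_{t\le T}\|\lambda_t\|_\alpha^{2k}]<\infty$ in particular gives $\E[\sup_{t\le T}\|\lambda_t\|_\alpha^{k}]<\infty$, and $\int_0^T\|L_k^k\A_s\|_{*k}\,ds<\infty$ (condition \eqref{eqn104}) follows from the uniform bound of Step~2. Theorem~\ref{thm1} then gives
\[
\E\big[(VIX_t^2)^k\mid\lambda_0\big]=\E\big[\langle\widehat\A^{\otimes k},\lambda_t^{\otimes k}\rangle_\alpha\mid\lambda_0\big]=\langle\A_t,\lambda_0^{\otimes k}\rangle_\alpha=\langle\widehat\A^{\otimes k},Z_t\lambda_0^{\otimes k}\rangle_\alpha,
\]
and inserting the expression for $Z_t\lambda_0^{\otimes k}$ from Step~2 together with the definition \eqref{eqn14} of $\langle\widehat\A^{\otimes k},\fdot\rangle_\alpha$ produces exactly the claimed integral; since the right–hand side is continuous in $\lambda_0\in\B$, the identity extends from $\lambda_0\in\operatorname{dom}\Acal$ to all $\lambda_0\in\B$.

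\textbf{Main obstacle.} The genuinely delicate point is that $(\lambda_t)$ is only a \emph{weakly mild} solution of \eqref{eq:BergomiSPDE} — for fractional $K_\ell$ the martingale part is not $\B$–valued — so it does not solve the martingale problem for $L$ on all of $P^D$. One must restrict to the domain $D$ of Section~\ref{ex6}, on which $\langle\langle\A,K_\ell\y\rangle\rangle_\alpha$ and hence $L$ and $\Lcal_k$ are well defined, and invoke Remark~\ref{rem:mildconcept}\ref{it1iii} (\cite[Proposition~6.3]{Kunze:13}) to obtain the local–martingale property of $N^p$ there; hand in hand with this go the justifications, under \eqref{eqn209}, of all the interchanges of limits — differentiation under the spatial and time integrals in Steps~2--3, the commutation $Z_t\Mcal_k=\Mcal_k Z_t$, and the memberships $\overline\y\in\Dcal_k$ and $\A_t\in\Dcal(L_k)$ — in the presence of the singularity of $K_\ell$ at $0$.
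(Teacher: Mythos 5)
Your proposal follows the paper's four-step programme faithfully, and Steps 1, 2 and the application of Lemma~\ref{lem:sufficient} in Step 4 agree with what the paper does; the explicit Feynman--Kac solution
\[
Z_t\y^{\otimes k}(x)=\prod_{i=1}^k\y(x_i+t)\,e^{\int_0^t V_k(x+\tau\mathbf 1)\,d\tau}
\]
is checked correctly. The genuine gap is in Step~3, where you assert that $\A_t\in\Dcal(L_k)$ is obtained ``en route'' from the differential relation $\partial_t\langle\A_t,\y^{\otimes k}\rangle_\alpha=\langle\A_t,\Mcal_k\y^{\otimes k}\rangle_\alpha$ together with Lemma~\ref{lem1}. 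This is circular: the version of Lemma~\ref{lem1} you need — $\langle L_k\A_t,\y^{\otimes k}\rangle_\alpha=\langle\A_t,\Mcal_k\y^{\otimes k}\rangle_\alpha$ — is only stated for $\vec\A$ \emph{already} in $\Dcal(L_k)$, and for $\A_t\notin\bigoplus_{j\le k}D^{\otimes j}$ the symbol $L_k\A_t$ is not even defined until one has verified membership in the domain of the closure. Knowing that $\y\mapsto\langle\A_t,\Mcal_k\y^{\otimes k}\rangle_\alpha$ is bounded only places $\A_t$ in the domain of the adjoint of $M_k|_{\Dcal_k}$, which need not coincide with $\Dcal(L_k)$. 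Since Definition~\ref{def:sol} and hence hypothesis~\ref{iti} of Theorem~\ref{thm1} explicitly require $\A_t\in\Dcal(L_k)$, the dual moment formula cannot yet be invoked.

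The paper closes this gap with a separate approximation argument that you do not provide. It writes the continuous symmetric weight $F(x)=e^{\int_0^t V_k(x+\tau\mathbf 1)\,d\tau}$ as a uniform limit, via Stone--Weierstrass, of finite sums $F_n=\sum_i\alpha_{n,i}F_{n,i}^{\otimes k}$ with $F_{n,i}\in C([0,\Delta])$, produces from these functionals $\A_{n,i}\in D$ (here the bounds from \eqref{eqn207} and \eqref{eqn209} are used to check that $\langle\langle\A_{n,i},K_\ell\y\rangle\rangle_\alpha$ is a bounded functional of $\y$), and verifies \emph{both} $\|\A_n-\A_t\|_{*k}\to0$ \emph{and} $\|\Lcal_k\A_n-\Lcal_k\A_t\|_{*k}\to0$, which is precisely what membership in the domain of the closed operator $L_k$ means. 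The same computation produces the bound $\|L_k^k\A_s\|_{*k}\le c_s$ used to check \eqref{eqn104}; your ``uniform bound of Step~2'' only controls $\|\A_s\|_{*k}$, not $\|L_k^k\A_s\|_{*k}$. You do list $\A_t\in\Dcal(L_k)$ among the ``delicate points'' at the end, which is the right instinct, but this acknowledges the missing piece rather than supplying it.
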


In the following two examples we give concrete specifications of the kernels.

\begin{example}
Applying  Proposition~\ref{lem7} now to  classical Bergomi model (see \cite{B:04, B:05}), where we have $K_\ell(x)=\omega_\ell e^{-\gamma_\ell x}$ for some constant $\omega_{\ell}$ and $\gamma_{\ell}$  yields
$$\mathbb{E}\left[ (VIX^2_t)^k|\lambda_0 \right]=\frac 1 {\Delta^k}\int_{[0,\Delta]^k}\prod_{i=1}^k\lambda_0(x_i+t)\prod_{i<j}e^{\sum_{\ell=1}^m \frac{\omega_\ell^2}{2\gamma_\ell}(1-e^{-2\gamma_\ell t})e^{-\gamma_\ell(x_i+x_j)}}d\vec x.
$$
\end{example}

\begin{example}
In the case of the rough Bergomi model (see \cite{BFG:16}) we have that $m=1$ and $K_1(x)=x^{H-1/2}$ for $H\in(0,1/2)$ (modulo a multiplicative constant). In this case we get
$$\mathbb{E}\left[ (VIX^2_t)^k|\lambda_0 \right]=\frac 1 {\Delta^k}\int_{[0,\Delta]^k}\prod_{i=1}^k\lambda_0(x_i+t)\prod_{i<j}e^{\int_0^t\big((x_i+\tau)(x_j+\tau)\big)^{H-1/2}d\tau}d x.$$
An inspection of this expression yields
$
\mathbb{E}\left[ (VIX^2_t)^k|\lambda_0 \right]
\leq  (VIX^2_{0,t})^k e^{\frac{k(k-1)}2\int_0^{t}\tau^{2(H-1/2)}d\tau}\\
$
for $VIX^2_{0,t}:=\frac 1 \Delta\int_0^\Delta\lambda_0(x+t)dx$,
proving 
 that the moments of $VIX^2_t$ are bounded from above by the moments of  a log-normal random variable $\overline X^2$ with parameters  $\mu=\ln(VIX^2_{0,t})-t^{2H}/4H$ and $\sigma^2=t^{2H}/2H$. Similarly since
$
\mathbb{E}\left[ (VIX^2_t)^k|\lambda_0 \right]
\geq  (VIX^2_{0,t})^k e^{\frac{k(k-1)}2\int_0^{t}(\Delta+\tau)^{2(H-1/2)}d\tau}\\
$
we also get that the moments of $VIX^2_t$ are bounded
  from below
 by the moments of  a log-normal random variable $\underline X^2$ with parameters  $\mu=\ln(VIX^2_{0,t})-((t+\Delta)^{2H}-\Delta^{2H})/4H$ and $\sigma^2=((t+\Delta)^{2H}-\Delta^{2H})/2H$.
This type of relation to log normal random variables has been used  in \cite{HJT:18}, where log-normal control variates are employed for variance reduction in Monte Carlo simulations.

Finally, observe that from the proof of Proposition~\ref{lem7} we know that $m_t(x):=\lambda_0(x+t1)e^{\int_0^tV_k(x+\tau1)d\tau}$ is  a candidate solution of the bidual system of ODEs corresponding to  $\Mcal_k$. This implies that $\langle \A^{\otimes k},m_t\rangle_\alpha$, whenever it is well defined, heuristically coincides with $\E[\langle \A,\lambda_t\rangle_\alpha^k|\lambda_0]$. For the moments of the spot volatility these heuristics therefore lead to
 $\E[\lambda_t(0)^k|\lambda_0]=\lambda_0(t)^ke^{\frac{k(k-1)}{4H}t^{2H}}$.
\end{example}

\subsubsection{A polynomial Volterra model and its VIX moments}\label{ex5}

The following example corresponds to a polynomial Volterra process for the spot variance and extends therefore (rough) affine models and the Jacobi stochastic volatility model of \cite{AF:16}. As explained below we understand here stochastic Volterra processes in the sense of \cite{ACLP:19}. 

\paragraph{Model specification}
Let $K \in L^2_{\text{loc}}$ denote again some (potentially fractional) kernel such that $K \in \widetilde \B$ in sense of Remark~\ref{rem:mildconcept}\ref{rem:fractional}. Define $D$ as
\[
D:= \left\{\A \in \operatorname{dom}(\mathcal{A}^*)\, | \, \langle \langle \A,K \rangle\rangle_\alpha|< \infty \right\}
\]
and consider \eqref{eq:polyoperator}
with $Q^i: D \otimes D \to (\B^{\otimes i})^*$ given by
$Q^i(\A \otimes \A)=c_i \langle\langle \A, K \rangle\rangle_{\alpha}^21^{\otimes i}$
 for some constants  $c_2$, $c_1$, $c_0$. Since $\langle 1, y \rangle_{\alpha}=y(0)$, the corresponding SPDE \eqref{eq:SPDE} can then be realized by
\begin{align}\label{eq:VolterraSPDE}
d\lambda_t(x) =  \Acal\lambda_t(x)dt+ K(x) \sqrt{C(\lambda_t(0))} dB_t,
\end{align}
where $B$ is a Brownian motion and  $C(v)= c_2 v^2+ c_1 v + c_0$. Again even though we do not treat existence of solutions, the solution concept that we have in mind is a mild one as outlined in Remark \ref{rem:mildconcept}\ref{it1i}.
Note in particular that for $a \in B$ 
\begin{align*}
\langle a, \lambda_t \rangle_{\alpha}&= \langle a, S_t\lambda_0 \rangle_{\alpha}+
\int_0^t \langle a, S_{t-s}K \rangle_{\alpha} C(\lambda_s(0)) dB_s\\&= \langle a, \lambda_0(t+\cdot) \rangle_{\alpha}+
\int_0^t \langle a, K(t-s +\cdot) \rangle_{\alpha} C(\lambda_s(0)) dB_s.
\end{align*}
Similarly as in \cite{CT:18}, choosing $a=1$ and setting $V_t=\langle 1, \lambda_t \rangle_{\alpha}=\lambda_t(0)$, then yields the announced Volterra equation
$
V_t=\lambda_0(t)+ \int_0^t K(t-s) C(V_s) dB_s.
$
For $c_0=c_1=c_2-1=0$, this corresponds to a \emph{Volterra geometric Brownian motion}, i.e.
$
V_t=\lambda_0(t)+ \int_0^t K(t-s) V_s dB_s.
$
In this particular parameter case the corresponding $k$-th dual operator satisfies the conditions of Remark~\ref{rem3} and can thus be written as
$L_k\vec \A=(\Lcal_0\A_0,\ldots, \Lcal_k\A_k)$, where
$$\Lcal_j\A^{\otimes j}=j(\Acal^*\A)\otimes \A^{\otimes (j-1)}
+\frac {j(j-1)}2
 \langle\langle \A, K \rangle\rangle_{\alpha}^2 1\otimes 1\otimes \A^{\otimes (j-2)}.
$$

\paragraph{VIX-moments}
We now compute the VIX moments in the case of the Volterra geometric Brownian motion, i.e. $c_0=c_1=c_2-1=0$
and we additionally suppose that $K\in \B$. The proof of the following proposition can be found in Section~\ref{secB}. Recall the notion of a weak solution from Lemma~\ref{lem3}, which we can apply here since we suppose that $K \in B$. Note also that $K$ is automatically bounded due to the choice of the weight function $\alpha$.

\begin{proposition}\label{prop1}
Let $(\lambda_t)_{t \geq 0}$ be a weak solution of \eqref{eq:VolterraSPDE} with $c_0=c_1=c_2-1=0$ and $K \in B$. Let $k \in \mathbb{N}$ and
assume that $\E[\sup_{t\leq T}\|\lambda_t\|_\alpha^{2k}]<\infty$. 
Then
$$\mathbb{E}\left[ (VIX^2_t)^k|\lambda_0 \right]
=\E[e^{\int_0^tV_k(X^{(k)}_\tau)d\tau}\lambda_0^{\otimes k}(X^{(k)}_t)|X^{(k)}_0\sim \Ucal([0,\Delta]^k)],
$$
where $V_k(x)=\sum_{i<j}K(x_i)K(x_j)$ and  $(X^{(k)}_t)_{t\geq0}$ is the $\R^k$-valued process generated by
 $$\Gcal^{(k)} f(x)=1^\top\nabla  f( x)
+\int f( x+\xi)-f( x)\nu(x,d\xi), \qquad  x\in \R^k$$
for $\nu( x ,\fdot)=\sum_{i<j}K(x_i)K(x_j)\delta_{(..,0,-x_i,0..0,-x_j,0..)}$.
\end{proposition}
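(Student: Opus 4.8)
The plan is to follow the road map of Section~\ref{sec:vix}. Since $K\in\B$, Lemma~\ref{lem3} shows that $(\lambda_t)_{t\ge0}$ is a polynomial process for the operator $L$ of \eqref{eq:polyoperator}, so both moment formulas apply. The first task is to identify the bidual operator and recognise it as the Feynman--Kac operator of $X^{(k)}$. Starting from the explicit $k$-th dual operator recorded above, $\Lcal_j a^{\otimes j}=j(\Acal^*a)\otimes a^{\otimes(j-1)}+\tfrac{j(j-1)}{2}\langle\langle a,K\rangle\rangle_\alpha^2\,1\otimes1\otimes a^{\otimes(j-2)}$, and using $\langle\Acal^*a,y\rangle_\alpha=\langle a,y'\rangle_\alpha$, $\langle 1,y\rangle_\alpha=y(0)$, and $\langle\langle a,K\rangle\rangle_\alpha=\langle a,K\rangle_\alpha$ (valid since $K\in\B$), one pairs with a rank-one tensor $y^{\otimes k}$, regarded via \eqref{eqn14} as the function $x\mapsto\prod_i y(x_i)$ on $\R_+^k$, and reads off from Remark~\ref{rem33} that the $k$-th bidual operator acts on sufficiently regular $f$ by
\[
\Mcal_k f(x)=1^\top\nabla f(x)+\sum_{i<j}K(x_i)K(x_j)\,f(x^{(i,j)}),
\]
where $x^{(i,j)}$ denotes $x$ with its $i$-th and $j$-th entries replaced by $0$. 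Since $f(x^{(i,j)})-f(x)$ is exactly the jump contribution appearing in $\Gcal^{(k)}$, this reads $\Mcal_k f=\Gcal^{(k)}f+V_k f$, which identifies $\Mcal_k$ with the Feynman--Kac operator of $X^{(k)}$: between jumps all coordinates move with unit speed, and at rate $K(x_i)K(x_j)$ the pair of coordinates $(i,j)$ is simultaneously reset to $0$. Because $K\in\B\subseteq\R+C_0(\R_+)$ is bounded, the total jump rate $V_k$ is bounded on compacts and jumps never raise coordinates, so $X^{(k)}$ is non-explosive and its martingale problem is well-posed.

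Next I would construct the candidate dual solution. By the classical Feynman--Kac formula, for $y\in(\operatorname{dom}\Acal)^{\otimes k}$ the function $Z_t y(x):=\E\bigl[e^{\int_0^t V_k(X^{(k)}_s)\,ds}\,y(X^{(k)}_t)\mid X^{(k)}_0=x\bigr]$ is the unique classical solution of $\partial_t m_t=\Mcal_k m_t$, $m_0=y$, and it satisfies the semigroup identity $\Mcal_k Z_t=Z_t\Mcal_k$ on its domain. Since $X^{(k)}$ started in $[0,\Delta]^k$ stays in $[0,\Delta+t]^k$, one has $|Z_t y(x)|\le e^{t\|V_k\|_{\infty,[0,\Delta+t]^k}}\|y\|_{\infty,[0,\Delta+t]^k}$; together with the continuous inclusion $\B\subseteq\R+C_0(\R_+)$ and the fact that $\|\cdot\|_\times$ is the projective norm, this gives the bound \eqref{eq:linfunc}, so that $y\mapsto\langle\widehat a^{\otimes k},Z_t y\rangle_\alpha$ extends (by the criterion \eqref{eqn8}) to an element $a_t\in(\B^{\otimes k})^*$. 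I then \emph{define} $a_t$ by $\langle a_t,y\rangle_\alpha:=\langle\widehat a^{\otimes k},Z_t y\rangle_\alpha$, so that $a_0=\widehat a^{\otimes k}\in D^{\otimes k}\subseteq\Dcal(L_k)$ (recall $\widehat a\in D$).

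It then remains to check that $(a_t)_{t\ge0}$ is an $\overline\Scal_k$-solution of $\partial_t a_t=\Lcal_k a_t$ in the sense of Definition~\ref{def:sol}. For $y\in\Scal$, differentiating the pairing in $t$ and using $\Mcal_k Z_t=Z_t\Mcal_k$ together with the adjointness of $L_k$ and $M_k$ (Lemma~\ref{lem1}) gives
\[
\tfrac{d}{dt}\langle a_t,y^{\otimes k}\rangle_\alpha=\langle\widehat a^{\otimes k},Z_t\Mcal_k y^{\otimes k}\rangle_\alpha=\langle a_t,\Mcal_k y^{\otimes k}\rangle_\alpha=\langle\Lcal_k a_t,y^{\otimes k}\rangle_\alpha,
\]
which upon integration is the defining identity of Definition~\ref{def:sol}; along the way one checks closability of $L_k$ and $a_t\in\Dcal(L_k)$, which follow from the bound on $a_t$ and the bound $\|\Lcal_k a_s\|_{*k}\le C_s$ obtained next. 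For the latter, boundedness of $K$ and the explicit form of $\Mcal_k$ give, after integrating the drift term over $[0,\Delta]^k$ by the fundamental theorem of calculus and using the sup-bound on $Z_s y^{\otimes k}$, that $|\langle\Lcal_k a_s,y^{\otimes k}\rangle_\alpha|=|\langle\widehat a^{\otimes k},\Mcal_k Z_s y^{\otimes k}\rangle_\alpha|\le C_s\|y\|_\alpha^k$ with $C_s$ locally bounded, hence $\int_0^T\|\Lcal_k a_s\|_{*k}\,ds<\infty$.

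Finally, the standing assumption $\E[\sup_{t\le T}\|\lambda_t\|_\alpha^{2k}]<\infty$ together with the preceding bounds yields, arguing as in the proof of Lemma~\ref{lem:sufficient} (only the $2k$-th moment of $\sup_{t\le T}\|\lambda_t\|_\alpha$ enters for the degree-$k$ polynomial $p_{\vec a_s}$), that $N^{p_{\vec a_s}}$ is a square-integrable martingale for every $s$ (condition~\ref{itiinew}) and that $\int_0^T\!\!\int_0^T|\E[Lp_{\vec a_s}(\lambda_u)]|\,ds\,du\le\bigl(\int_0^T\|\Lcal_k a_s\|_{*k}\,ds\bigr)\bigl(\int_0^T\E[\|\lambda_u\|_\alpha^k]\,du\bigr)<\infty$ (condition~\ref{itii}), while condition~\ref{iti} was verified above. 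Applying Theorem~\ref{thm1} at $t=0$ with $p_{\widehat a^{\otimes k}}(y)=\langle\widehat a^{\otimes k},y^{\otimes k}\rangle_\alpha=\bigl(\tfrac1\Delta\int_0^\Delta y(x)\,dx\bigr)^k$, so that $p_{\widehat a^{\otimes k}}(\lambda_t)=(VIX_t^2)^k$, gives $\E[(VIX_t^2)^k\mid\lambda_0]=\langle a_t,\lambda_0^{\otimes k}\rangle_\alpha=\langle\widehat a^{\otimes k},Z_t\lambda_0^{\otimes k}\rangle_\alpha$, and unwinding the definitions of $\widehat a^{\otimes k}$ and $Z_t$ — a Fubini interchange of $\tfrac1{\Delta^k}\int_{[0,\Delta]^k}(\cdot)\,dx$ with the expectation over $X^{(k)}$ — produces the claimed identity. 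The main obstacle is the functional-analytic bookkeeping of the third paragraph: making precise the space on which $\Mcal_k$ acts, verifying that $Z_t y^{\otimes k}$ lies in its domain so that the adjoint relation (and hence the $\overline\Scal_k$-solution property) is rigorous, and checking closability of $L_k$; by contrast the construction of $X^{(k)}$ is routine here precisely because $K\in\B$ carries no singularity (unlike the rough kernel in Section~\ref{ex6}).
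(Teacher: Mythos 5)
Your proposal follows the paper's proof essentially step by step: it identifies the bidual operator $\Mcal_k$ as the Feynman--Kac operator $\Gcal^{(k)}+V_k$ of the drift-plus-reset process $X^{(k)}$, constructs the candidate dual solution $a_t$ by pairing $\widehat a^{\otimes k}$ with $Z_t y$, verifies $a_t\in\Dcal(\Lcal_k)$ and $\partial_t a_t=\Lcal_k a_t$ via the adjointness of $L_k$ and $M_k$, checks conditions~\ref{itiinew} and~\ref{itii} through Lemma~\ref{lem:sufficient}, and closes by applying Theorem~\ref{thm1} and a Fubini interchange — the same four-step road map and the same key bounds. The only presentational difference is that you invoke the commutation $\Mcal_k Z_t=Z_t\Mcal_k$ directly where the paper spells out the strong-continuity limit \eqref{eq:strongstrong} of the semigroup $(Z_t)_{t\ge0}$, and the paper carries out in full the functional-analytic bookkeeping (identification of $\Dcal(\Lcal_k)$ with $\Dcal(\Acal^*\otimes\Id^{\otimes(k-1)})$ and the explicit bound on $\langle a_t,\nabla y^{\otimes k}\rangle_\alpha$) that you flag as the remaining obstacle.
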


In the following we analyze some specific parametrization and specific moments.

\begin{example}
Let  $K(x)=\omega e^{-\gamma x}$ and $\lambda_0=c(1-e^{-\gamma x})+e^{-\gamma x}V_0$. Then 
the assumption of the above proposition, namely $\E[\sup_{t\leq T}\|\lambda_t\|_\alpha^{2k}]<\infty$, is  automatically satisfied. Indeed in this case 
$$\lambda_t(x)=\E[V_{t+x}|\Fcal_t]=c(1-e^{-\gamma x})+e^{-\gamma x}V_t,$$
where $V_t$ solves $dV_t=\gamma(c- V_t)dt+\omega V_tdW_t$. For a general initial condition $\lambda_0$, the equation for $V_t$ becomes
$$dV_t=\gamma(\lambda_0(t)+\frac 1 \gamma\lambda_0'(t)- V_t)dt+\omega V_tdW_t.$$
Finally, observe that assuming that  $\lambda_0=be^{-\gamma x}$ for some constant $b\in \R$ we get
$$\mathbb{E}\left[ VIX^{2k}_t |\lambda_0=be^{-\gamma x}\right]=(b(1-e^{-\gamma\Delta}))^k(\gamma\Delta)^{-k}e^{-(k\gamma-\frac{k(k-1)}{2}\omega^2)t}.$$

\end{example}

\begin{remark}
Proposition~\ref{prop1} only treats the case when $K \in B$. We however expect that the result still holds true if 
$K\in\widetilde \B\setminus \B$ and $K \in L^2_{\text{loc}}$.  Analyzing the generator $\Gcal^{(k)}$ we can deduce that the process $(X^{(k)}_t)_{t\geq0}$ has constant drift 1 till the first jump. This occurs at an exponential time. When a jump occurs two of the components of the process jump to 0. 

\end{remark}

\subsection{Signature of Brownian motion}\label{sec:sig}

In the following section we show that the signature process of a $d$-dimensional Brownian motion, that is the sequence of iterated integrals (in the Stratonovich sense), is a polynomial process. We then exploit the polynomial machinery to compute its expected signature, a well-known formula which can for instance be found in \cite{FH:14}.
Before introducing the mathematical framework let us here briefly outline the relevance of the signature of general stochastic processes.
The signature of a path, first studied by \cite{C:57, C:77}, is a highly important object in rough path theory (\cite{L:98}).  This is explained by the following three facts:

\begin{itemize}
\item The signature of a path of bounded variation uniquely determines the path up to the tree-like equivalence (see, e.g.~Theorem 2.13 in \cite{LLN:13}).

\item 
  Under certain regularity conditions, the expected signature of a stochastic process determines 
the law of the signature  (see Theorem 6.1 in \cite{CL:16}).

\item Every continuous function of the signature can  be approximated by a linear function of the signature arbitrarily well (on compacts).
\end{itemize}

These three properties have led the authors of \cite{LLN:13} to introduce the so-called expected signature model which is nothing else than a linear regression model for the signature of a stochastic process $Y$ on the signature of a stochastic process $X$.

In practice the stochastic process $Y$ is often a stochastic differential equation driven by $X$. The (generically non-linear and non Lipschitz) functional relationship of $X \mapsto Y$, can then be explained by a linear map of the signature of $X$. 
Considering for $X$ a $d$-dimensional Brownian motion $B$, this means that any $n$ dimensional SDE of the form
\begin{align}\label{eq:Y}
dY_t= \sum_{i=1}^d V_i(Y_t) \circ B_t^i, \quad Y_0=y,
\end{align}
with analytic vector fields $V_i: \mathbb{R}^n \to \mathbb{R}^n$ and where $\circ$ denotes the Stratonovich integral 
can be represented by a linear map of the signature of $B$.
Since the signature process of $B$ is a polynomial process as outlined below, this then also translates to the signature process of $Y$. In this sense we encounter a surprising universality of the polynomial class. In a subsequent paper we shall
derive the polynomial property of the signature of processes of form \eqref{eq:Y} directly and provide a procedure how to compute the expected signature. This has applications  for instance for the generalized method of moments on a process level as considered for instance in \cite{PL:11}. Here, we consider the important case of Brownian motion.

Set\footnote{Here we use $\ootimes$ instead of $\otimes$ to emphasize that it denotes a nonsymmetric tensor product.}  $T(\R^d):=\bigoplus_{n=0}^\infty(\R^d)^{\ootimes n}$ and  $T^N(\R^d):=\bigoplus_{n=0}^N(\R^d)^{\ootimes n}$, and recall that $(\R^d)^{\ootimes n}\cong\R^{d^n}$.  
In particular,  $(\R^d)^{\ootimes 0}\cong\R $.
For each $ \y\in T^N(\R^d)$  let then $\y_{n,{\bf i}}$ denote the components of $ \y=(\y_0,\ldots,\y_N)$ meaning that 
$$ \y=\y_0+\sum_{n=1}^N\sum_{{\bf i}\in \Ical_n} \y_{n,{\bf i}} e_{i_1}\ootimes\cdots\ootimes e_{i_n},\qquad \Ical_n:=\{1,\ldots,d\}^n, $$
where $e_i$ denotes the $i$-th unit vector in $\R^d$.
We are now interpreting the signature of a $d$-dimensional Brownian motion as a polynomial process taking values on the Banach space $B=T^N(\R^d)$. The  dual structure corresponding to $B$ is then given by $B^*=T^N(\R^d)$ with pairing $\langle a, y \rangle = a_0y_0+\sum_{n=1}^N\sum_{{\bf i}\in \Ical_n} y_{n,{\bf i}} a_{n,{\bf i}},$ for  $\Ical_n=\{1,\ldots,d\}^n$.


Let now $(B_t)_{t\in[0,T]}$ be a $d$-dimensional Brownian motion.
Recall that denoting by $\circ$ the Stratonovich integral we can write
$$\int H_s\ootimes \circ dB_s=\sum_{i=1}^{\ell}\sum_{j=1}^d\Big(\int H_s^i \circ dB_s^j\Big) e_i\ootimes e_j,$$
for each $\R^\ell$-valued process $H$ such that the right hand side is well defined. Define then
$$S_T^{(0)}=1,\qquad S_T^{(n)}:=\int_{0<t_1<\cdots<t_n<T}\circ dB_{t_1}\ootimes\cdots\ootimes\circ dB_{t_n}\quad n\geq1.$$
The signature $S(B)_{0,T}\in T(\R)$  and the truncated signature $S(B)^N_{0,T}\in T^N(\R^d)$ of $B$ are then given by
$$S(B)_{0,T}:=(S_T^{(0)},S_T^{(1)},S_T^{(2)},\ldots)\quad\text{ and }\quad S(B)^N_{0,T}:=(S_T^{(0)},S_T^{(1)},\ldots, S_T^{(N)}),$$
respectively. Observe that by the definition of the Stratonovich integral we have that
$$dS_t^{(n)}=\frac 1 2 \sum_{i=1}^d\big(S_t^{{(n-2)}}\ootimes  e_i\ootimes e_i\big)dt+S_t^{{(n-1)}}\ootimes dB_t.$$
An  application of the It\^o formula yields that the generator $L:P\to P$  of $(S(B)^N_{0,t})_{t\geq0}$ is given by
\begin{align*}
Lp( \y)&=\frac 1 2 \sum_{n=2}^N\sum_{{\bf i}\in \Ical_n}
\y_{n-2,i_1\cdots i_{n-2}}1_{\{i_{n-1}=i_n\}}
\frac d{d\y_{n,{\bf i}}}p( \y)\\
&\qquad+\frac 1 2 \sum_{n,m=1}^N\sum_{{\bf i} \in \Ical_n}\sum_{{\bf j} \in \Ical_m}
\y_{n-1,i_1\cdots i_{n-1}}
\y_{m-1,j_1\cdots j_{m-1}}1_{\{i_{n}=j_m\}}
\frac {d^2}{d\y_{n,{\bf i}}d\y_{m,{\bf j}}}p( \y)
\end{align*}
for each $p\in P$, showing that $(S(B)^N_{0,t})_{t\geq0}$ is a polynomial diffusion. Compute then 
$$
L_1(e_{i_1}\ootimes \cdots\ootimes e_{i_n})=\begin{cases}
\frac 12 1_{\{i_{n-1}=i_n\}}e_{i_1}\ootimes \cdots\ootimes e_{i_{n-2}}&\text{for $n\geq2$},\\
0&\text{else},
\end{cases}
$$
and observe that for $n$ even we have that
\begin{align} 
\exp(tL_1)(e_{i_1}\ootimes \cdots\ootimes e_{i_n})
&=\sum_{\ell=0}^\infty \frac{t^\ell}{\ell!}L_1^\ell(e_{i_1}\ootimes \cdots\ootimes e_{i_n})\notag\\
&=\sum_{\ell=0}^{n/2} \frac{(t/2)^\ell}{\ell!}\prod_{k=0}^{\ell-1}1_{\{i_{n-2k}=i_{n-2k-1}\}}e_{i_1}\ootimes \cdots\ootimes e_{i_{n-2\ell}}.\label{eqn500}
\end{align}
Note here that $L^{\ell}_1$ means an $\ell$-fold application of $L_1$ and the empty product is equal to $1$.
Let us now compute 
the expectation of the $(i_1\cdots i_n)$-component of $S_t^{(n)}$, i.e.,
$\mathbb{E}[ \langle e_{i_1} \ootimes \cdots\ootimes e_{i_n}, S(B)^{N}_{0,t} \rangle ]$.
By the dual moment formula this is equal to $ \langle a_t, S(B)^{N}_{0,0} \rangle $, where $a_t$ is the solution of 
$
\partial_t a_t =L_1 a_t,$ for $a_0 =e_{i_1} \ootimes \cdots\ootimes e_{i_n},
$
which is given by the exponential in \eqref{eqn500}.
Since $S(B)^N_{0,0}=1+\sum_{n=1}^N\sum_{{\bf i}\in \Ical_n} 0 e_{i_1}\ootimes\cdots\ootimes e_{i_n}$,
we conclude that
\begin{align}
\mathbb{E}[ \langle e_{i_1} \ootimes \cdots\ootimes e_{i_n}, S(B)^{N}_{0,t} \rangle ]&= \left\langle \exp(tL_1)(e_{i_1}\ootimes \cdots\ootimes e_{i_n}), S(B)^N_{0,0} \right \rangle \notag \\
&= \frac{(t/2)^{(n/2)}}{(n/2)!}\prod_{k=0}^{n/2-1}1_{\{i_{n-2k}=i_{n-2k-1}\}}, \label{eq:indicators}
\end{align}
i.e. the expectation of the $(i_1\cdots i_n)$-component of $S_t^{(n)}$ is the coefficient of the basis element 1 in \eqref{eqn500}.
 A similar reasoning shows that the expectation of the $(i_1\cdots i_n)$-component of $S_t^{(n)}$ is 0 for each $n$ odd.
Let now $n=2k$. Then the only indices  $(i_1\cdots i_{2k})$ which lead to nonzero in \eqref{eq:indicators} are of the form
$(j_1, j_1, j_2, j_2, ...,j_{k}, j_{k})$. Hence the only basis elements of $\E[S_t^{(2k)}]$ whose coefficient is non zero are of the form $ e_{j_1} \ootimes e_{j_1} \ootimes e_{j_2}  \ootimes e_ {j_2}, \cdots,e_{j_{k}}\ootimes e_{j_{k}}$. (Formally) summing over all those yields 
\[
\sum_{j_1=1}^d\sum_{j_2=1}^d \cdots\sum_{j_k=1}^d e_{j_1} \ootimes e_{j_1} \ootimes e_{j_2}  \ootimes e_ {j_2} \cdots e_{j_{k}}\ootimes e_{j_{k}}=(\sum_{i=1}^d e_i\ootimes e_i)^{\ootimes k},
\] 
and we can thus conclude that
$$\E[S_t^{(n)}]=\begin{cases}
\frac{(t/2)^k}{k!}(\sum_{i=1}^d e_i\ootimes e_i)^{\ootimes k} &\text{if $n=2k$ for some $k\in \N\cup{\{0\}}$,}\\
0& \text{otherwise.}
\end{cases}$$
Summing over $n$ yields
$\E[S(B)^N_{0,t}]=\sum_{k=0}^{\lfloor N/2\rfloor}\frac{(t/2)^k}{k!}(\sum_{i=1}^d e_i\ootimes e_i)^{\ootimes k},$
which in particular implies that
$$\E[S(B)_{0,t}]=\sum_{k=0}^{\infty}\frac{(t/2)^k}{k!}(\sum_{i=1}^d e_i\ootimes e_i)^{\ootimes k}=\exp\Big(\frac t 2\sum_{i=1}^d e_i\ootimes e_i\Big).$$
This expression coincides with the result of Theorem~3.9 in \cite{FH:14}.

\appendix

\section{Auxiliary results}\label{secA}

We here collect auxiliary results which are needed in Section \ref{IIsec:2}.

\begin{lemma}\label{lem4}
Fix $p(\y)=\sum_{j=0}^k\langle \A_j,\y^{\otimes j}\rangle$ for $\A_j\in(\B^{\otimes j})^*$.  Then $p(\y)=0$ for all $\y\in \B $ if and only if $\A_j=0$ for all $j$.
\end{lemma}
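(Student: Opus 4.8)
The plan is to reduce the statement to a one-variable polynomial identity by a scaling argument. The ``if'' direction is immediate: if every $\A_j$ is the zero functional then $p$ is identically zero. For the ``only if'' direction, suppose $p(\y)=0$ for all $\y\in\B$. Fix an arbitrary $\y\in\B$ and consider the real-valued function $t\mapsto p(t\y)$. Since $(t\y)^{\otimes j}=t^j\,\y^{\otimes j}$ and each $\A_j$ is linear, this equals $\sum_{j=0}^k t^j\langle\A_j,\y^{\otimes j}\rangle$, a polynomial in $t$ of degree at most $k$ which vanishes for every $t\in\R$. Hence all of its coefficients must vanish, i.e.\ $\langle\A_j,\y^{\otimes j}\rangle=0$ for every $j\in\{0,\ldots,k\}$.

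Since $\y\in\B$ was arbitrary, we obtain $\langle\A_j,\y^{\otimes j}\rangle=0$ for all $\y\in\B$ and all $j$. Now recall that, by the definition of the symmetric algebraic tensor product in Section~\ref{IIs1}, $\B^{\otimes j}=\Span\{\y^{\otimes j}:\y\in\B\}$, so every element of $\B^{\otimes j}$ is a finite linear combination of the form $\sum_i\alpha_i\,\y_i^{\otimes j}$ with $\alpha_i\in\R$ and $\y_i\in\B$. By linearity of $\A_j$ we get $\langle\A_j,\sum_i\alpha_i\,\y_i^{\otimes j}\rangle=\sum_i\alpha_i\langle\A_j,\y_i^{\otimes j}\rangle=0$. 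Thus $\A_j$ annihilates all of $\B^{\otimes j}$, that is $\A_j=0$ in $(\B^{\otimes j})^*$, for each $j\in\{0,\ldots,k\}$, which is the claim.

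There is no serious obstacle here. The only point worth a word of care is the justification that testing a functional against the ``diagonal'' elementary tensors $\y^{\otimes j}$ is enough to conclude that it vanishes on $\B^{\otimes j}$; this is exactly where the chosen definition of the symmetric tensor product (as the linear span of such elements) is used, so no extra density or continuity argument is needed. Everything else is the elementary fact that a real polynomial with infinitely many roots is the zero polynomial, applied coefficient by coefficient after the rescaling $\y\mapsto t\y$.
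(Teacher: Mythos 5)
Your proof is correct and uses essentially the same idea as the paper's: both reduce to the scaling $\y\mapsto t\y$ (or $\e\y$), conclude that each $\langle\A_j,\y^{\otimes j}\rangle$ vanishes separately, and then use that $\B^{\otimes j}$ is by definition the span of the diagonal tensors $\y^{\otimes j}$ to deduce $\A_j=0$. The paper extracts the coefficients one by one via an inductive limit $\lim_{\e\to 0}\e^{-j}p(\e\y)$, while you invoke directly that a one-variable real polynomial vanishing identically has all zero coefficients; these are the same argument in slightly different packaging.
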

\begin{proof}
Observe that $\langle \A_j,\y^{\otimes j}\rangle=0$ for all $\y\in \B$ if and only if $\langle \A_j,\tilde \y\rangle=0$ for all $\tilde\y\in \B^{\otimes j}$ and thus if and only if $\A_j=0$. Suppose that $p(\y)=0$ for all $\y\in \B $. Then $\A_0=p(0)=0$. Proceeding inductively we can prove that $\langle \A_j,\y^{\otimes j}\rangle=\lim_{\e \to 0}\e^{-j}p(\e \y)=0$ and the first implication follows. The second implication is clear.
\end{proof}

\begin{lemma}\label{lem12}
Let $L\colon P^D\to P$ be an $\Scal$-polynomial operator. Then there exists a $k$-th dual operator for each $k\in\N$.
\end{lemma}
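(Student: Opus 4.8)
The plan is to sidestep the non\nobreakdash-uniqueness of the polynomial $q$ appearing in Definition~\ref{IIdef1} by first constructing a canonical \emph{linear} map into a suitable quotient space, and then recovering a genuine $\bigoplus_{j=0}^k(\B^{\otimes j})^*$\nobreakdash-valued operator from it via a linear section. Fix $k$ and let $P_k$ be the space of polynomials on $\B$ of degree at most $k$, and $W:=\bigoplus_{j=0}^k(\B^{\otimes j})^*$. By Lemma~\ref{lem4} the assignment $\vec B\mapsto p_{\vec B}$ is a linear bijection $W\to P_k$; I denote its inverse, the coefficients vector map, by $c\colon P_k\to W$. Let $K:=\{q\in P_k\colon q|_\Scal=0\}$, a linear subspace of $P_k$, and let $\pi\colon P_k\to P_k/K$ be the quotient map. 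The role of $K$ is that, for $q,q'\in P_k$, one has $q|_\Scal=q'|_\Scal$ if and only if $\pi(q)=\pi(q')$.

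The first step is to define $\Phi\colon\bigoplus_{j=0}^k D^{\otimes j}\to P_k/K$. Given $\vec\A\in\bigoplus_{j=0}^k D^{\otimes j}$, the polynomial $p_{\vec\A}$ lies in $P^D$ and has degree at most $k$; since $L$ is $\Scal$\nobreakdash-polynomial there is $q\in P$ with $\deg(q)\le k$ and $q|_\Scal=Lp_{\vec\A}|_\Scal$, so $q\in P_k$, and I set $\Phi(\vec\A):=\pi(q)$. This is well defined: any two such representatives differ by an element of $K$. The second step is to check linearity of $\Phi$. If $q,q'\in P_k$ represent $\Phi(\vec\A),\Phi(\vec\A')$, then, using that $\vec\A\mapsto p_{\vec\A}$ and $L$ are linear, $q+q'\in P_k$ satisfies $(q+q')|_\Scal=(Lp_{\vec\A}+Lp_{\vec\A'})|_\Scal=Lp_{\vec\A+\vec\A'}|_\Scal$, hence $\Phi(\vec\A+\vec\A')=\pi(q+q')=\Phi(\vec\A)+\Phi(\vec\A')$; scalar multiples are handled the same way.

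The final step is to lift $\Phi$. Since $\pi$ is a surjection of vector spaces, it admits a linear section $\sigma\colon P_k/K\to P_k$ with $\pi\circ\sigma=\id$. I then set $L_k:=c\circ\sigma\circ\Phi\colon\bigoplus_{j=0}^k D^{\otimes j}\to W$, which is linear as a composition of linear maps, and write $L_k\vec\A=(L_k^0\vec\A,\ldots,L_k^k\vec\A)$ componentwise. To verify the defining identity of Definition~\ref{IIeq:L on Dk}, fix $\vec\A\in\bigoplus_{j=0}^k D^{\otimes j}$ and pick a representative $q\in P_k$ of $\Phi(\vec\A)$ with $q|_\Scal=Lp_{\vec\A}|_\Scal$. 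Then $p_{L_k\vec\A}=\sigma(\Phi(\vec\A))$ and $\pi(\sigma(\Phi(\vec\A)))=\Phi(\vec\A)=\pi(q)$, so $\sigma(\Phi(\vec\A))-q\in K$, i.e.\ $\sigma(\Phi(\vec\A))$ and $q$ agree on $\Scal$. Consequently, for every $\y\in\Scal$,
\[
(L_k\vec\A\cdot\overline\y)=p_{L_k\vec\A}(\y)=q(\y)=Lp_{\vec\A}(\y),
\]
which is exactly the required property, so $L_k$ is a $k$-th dual operator.

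I expect the main (and essentially only) obstacle to be establishing linearity of $L_k$, precisely the difficulty flagged in the text preceding Definition~\ref{IIeq:L on Dk}: a priori one must make infinitely many independent choices of $q$, and there is no reason these respect addition and scaling. Passing to $P_k/K$, where the representative becomes irrelevant and the choice disappears, is exactly what forces compatibility; the remaining ingredients — uniqueness of coefficient vectors for polynomials of degree at most $k$ (Lemma~\ref{lem4}) and the existence of a linear section of a vector-space quotient — are routine.
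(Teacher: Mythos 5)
Your proof is correct and takes a genuinely different, cleaner route than the paper's. The paper proceeds by induction on degree and, at each inductive step, a Zorn's lemma argument: it builds a single $\B$-polynomial operator $\widetilde L\colon P^D\to P$ satisfying $\widetilde Lp|_\Scal=Lp|_\Scal$ and $\deg(\widetilde Lp)\le\deg(p)$ for all $p\in P^D$, after which Lemma~\ref{lem4} (uniqueness of coefficient vectors over $\B$) immediately yields each $L_k$. Your argument instead works at degree $k$ directly: quotienting the space of degree-at-most-$k$ polynomials by the kernel $K$ of the restriction-to-$\Scal$ map makes the assignment $\vec\A\mapsto\pi(q)$ canonical, so linearity is automatic, and a linear section of the quotient map then recovers a genuine coefficients-vector-valued operator. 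Both arguments invoke the axiom of choice, yours via the existence of a linear complement of $K$, the paper's via Zorn. Two minor structural differences are worth flagging, neither affecting the validity of the lemma as stated: (a) a generic section $\sigma$ need not respect the degree filtration, so your $L_k\vec\A$ may have degree up to $k$ even when $\deg(p_{\vec\A})<k$, whereas the paper's $\widetilde L$ preserves degree; Definition~\ref{IIeq:L on Dk} imposes no such constraint, but the paper's version aligns more naturally with the graded refinement of Remark~\ref{rem3}. (b) The paper's single $\widetilde L$ gives a family $(L_k)_{k\in\N}$ that is compatible under the natural inclusions, while your operators are constructed independently for each $k$.
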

\begin{proof}
We claim that there exists a $\B $-polynomial operator $\widetilde L:P^D\to P$ such that $\widetilde Lp|_\Scal=Lp|_\Scal$ for each $p\in P^D$. In other words, we claim that for each $p\in P^D$ there exists a $q_p\in P$ such that $\deg(q_p) \le \deg(p)$ and $p\mapsto q_p$ is linear.
If this is the case the claim follows by Lemma~\ref{lem4}.

We proceed by induction. Set $P^D_k:=\{p\in P^D:\deg(p)\leq k\}$ and for all $p\in P^D_0$ set $\widetilde Lp\equiv q$ where $q\in \R$ satisfies $Lp|_\Scal\equiv q$. 
Clearly $\widetilde L|_{P^D_0}$ is linear, satisfies the $\B $-polynomial property, and $\widetilde Lp|_\Scal=Lp|_\Scal$ for all $p\in P^D_0$. 
Fix $k\in \N$ and suppose that $\widetilde L|_{P^D_k}$ is linear, satisfies the $\B $-polynomial property, and $\widetilde Lp|_\Scal=Lp|_\Scal$ for all $p\in P^D_k$. 
Consider the set of all pairs $(V,\Lcal)$ of a vector space $V$ such that $P^D_{k}\subseteq V \subseteq P^D_{k+1}$ and $\Lcal: V\to P$ 
is a linear extension of $\widetilde L|_{P^D_k}$  satisfying the $\B $-polynomial property and such that $\Lcal p|_\Scal= Lp|_\Scal$ for all $p\in V$. 
By a standard application of Zorn's lemma, we get that this set has a maximal element $(V,\Lcal)$ with respect to the order relation given by
$$(V_1,\Lcal_1)``\leq"(V_2,\Lcal_2)\qquad :\Leftrightarrow \qquad V_1\subseteq V_2\quad \text{and}\quad\Lcal_2|_{V_1}=\Lcal_1.$$
Assume by contradiction that $V\neq P^D_{k+1}$ and pick $p\in P^D_{k+1}\setminus V$. Since $L$ is $\Scal$-polynomial, there exists a $q\in P$ such that $Lp|_\Scal=q$ and $\deg(q)\leq k+1$. Set $\Lcal p:=q$ and extend $\Lcal$ to $V+\R p$ linearly. Linearity of $L$ and $\Lcal$ yield that $\Lcal\tilde p|_\Scal=L\tilde p|_\Scal$ for all $\tilde p\in V+\R p$. Finally, noting that $\deg(\tilde p+\alpha p)=k+1$ for all $\tilde p\in V$ and $\alpha\neq0$ we can conclude that
$$\deg(\Lcal (\tilde p+\alpha p))=\deg(\Lcal \tilde p +\alpha q)\leq \max(\deg(\Lcal \tilde p), \deg(\alpha q))\leq k+1$$
contradicts the maximality of $(V,\Lcal)$. Setting $\widetilde L p:= \Lcal p$ for all $p\in P^D_{k+1}$ concludes the proof.
\end{proof}

\section{Proof of Proposition~\ref{lem7} and Proposition~\ref{prop1}}\label{secB}

We here collect the proofs of the VIX moment formulas stated in Section \ref{sec43}.

\begin{proof}[Proof of Proposition~\ref{lem7}]
Recall that by Remark~\ref{rem:mildconcept}\ref{it1iii}, the process $(\lambda_t)_{t \geq 0}$ solves the martingale problem  for the polynomial operator $L:P^D\to P$.
We follow now the scheme outlined in Section \ref{sec:vix}. Throughout the proof we shall use the following inequalities. Set $C_t:=e^{\frac{k(k-1)}{2}\sum_{\ell=1}^m\int_0^{t+\Delta}K_\ell(\tau)^2d\tau}$ and note that $e^{\int_0^tV_k(x+\tau1)d\tau}\leq C_t<\infty$ for each $t>0$. Observe also that by Lemma~3.2 in \cite{BK:14} we have that
\begin{equation}\label{eqn207}
|\y(x)|\leq \cc\| \y\|_\alpha\qquad\text{and}\qquad \frac 1 \Delta \int_0^\Delta\y'(x+t)dx\leq\|\y\|_\alpha
\end{equation}
for each $\y\in \B$, for some constant $\cc>0$. 
\begin{enumerate}
\item We observe that $M_n\vec y=(\Mcal_0\y_0,\ldots,\Mcal_n\y_n)$
 with
\begin{align*}
\Mcal_k\y( x)&=1^\top\nabla\y(x)+V_k(x)\y(x),\qquad \y\in (\operatorname{dom}(\mathcal{A}))^{\otimes k},
\end{align*}
where $\mathcal{A}$ is given by \eqref{eqn5}. 
\item
 Observe that for each $\y\in (\operatorname{dom}(\mathcal{A}))^{\otimes k}$ the corresponding PDE \eqref{eqn204}
 corresponds to the Cauchy problem of a pure drift process $dX_t= 1 dt$ on $\mathbb{R}^k$ with potential $V_k$. The Feynman Kac formula thus yields that its classical strong solution on $(\R_+\setminus\{0\})^k\times [0,T]$ is given by $Z_ty( x):=\y(x+t1)e^{\int_0^tV_k(x+\tau1)d\tau}$. 
 Moreover, note that by \eqref{eqn14}, definition of $C_t$, and \eqref{eqn207} we get that
$$|\langle \widehat \A^{\otimes k},Z_t\y^{\otimes k}\rangle_\alpha|
=\bigg|\frac 1 {\Delta^k}\int_{[0,\Delta]^k}Z_t\y^{\otimes k}(\vec x)d\vec x\bigg|
\leq \cc ^kC_t \|\y\|_\alpha^k$$
for each $\y\in \B$, proving that $\langle\widehat \A^{\otimes k},Z_t\y\rangle_\alpha\leq \cc ^kC_t \|y\|_\times$ for each $\y\in \B^{\otimes k}$, which is \eqref{eq:linfunc}.
 \item\label{it2iii} Due to this estimate 
  we can now define a candidate solution  $\A_t\in (\B^{\otimes k})^*$ (in the sense of Definition \ref{def:sol}) of $\partial_t\A_t=\Lcal_k\A_t$ for $\A_0=\widehat \A^{\otimes k}$ as
 $$\langle \A_t,\y\rangle_\alpha:=\langle\widehat \A^{\otimes k},Z_t\y\rangle_\alpha
 =\frac 1 {\Delta^k}\int_{[0,\Delta]^k}\y( x+t1)e^{\int_0^tV_k( x+\tau1)d\tau}d x,\qquad \y\in \B^{\otimes k}.$$
\item In order to conclude the proof, we need to check that 
$(\A_t)_{t\geq0}$ satisfies the conditions of the dual moment formula given in Theorem~\ref{thm1}\ref{iti}-\ref{itii}.
Observe that Condition~\ref{iti} of Theorem~\ref{thm1} is satisfied if and only if there is a map $(\A_t)_{t\geq0}\subseteq \Dcal(\Lcal_k)$ satisfying
\begin{align}\label{eq:dualPDEBergomi}
\langle \A_t, y^{\otimes k }\rangle_{\alpha}=\langle \widehat \A, y^{\otimes k }\rangle_{\alpha} +\int_0^t \langle \mathcal{L}_k a_s, y^{\otimes k }\rangle_{\alpha}ds
\end{align}
for all $ y \in B$.
We prove now the following claims:

\emph{Claim: $\A_t\in \Dcal(\Lcal_k)$} \\
To this end we fix $t\geq0$ and we construct a sequence $\A_n=\sum_{i=1}^n\alpha_{n,i}\A_{n,i}^{\otimes k}$ such that $\A_{n,i}\in D$, $\|\A_n-\A_t\|_{*k}\to0$, and $\|\Lcal_k \A_n-\Lcal_k \A_t\|_{*k}\to0$ where $\Lcal_k \A_t\in (\B^{\otimes k})^*$ is uniquely determined by
\begin{equation}\label{eqn9}
\langle\Lcal_k \A_t,\y^{\otimes k}\rangle_\alpha
=\frac 1 {\Delta^k}\int_{[0,\Delta]^k}\Mcal_k\y^{\otimes k}(x+t1)F(x)dx
,\qquad \y\in \operatorname{dom}(\mathcal{A})
\end{equation}
for $F( x):=e^{\int_0^tV_k( x+\tau1)d\tau}$. Observe that the assumptions on $K$ guarantee that  $F$ is symmetric and continuous on $[0,\Delta]^k$. By Stone-Weierstrass theorem we can thus find a sequence $F_n=\sum_{i=1}^n\alpha_{n,i}F_{n,i}^{\otimes k}$ such that $F_{n,i}\in C([0,\Delta])$ and $\e_n:=\|F_n-F\|_{\infty,k}\to0$ for $n\to\infty$, where $\|\fdot\|_{\infty,k}$ denotes the supremum norm on $[0,\Delta]^k$. Set $\A_{n,i}\in \B$ as $\langle\A_{n,i},\y\rangle_\alpha:=\frac 1 \Delta\int_0^\Delta\y(x+t)F_{n,i}(x)dx$. Using \eqref{eqn207} we obtain the following estimates
\begin{align*}
|\langle\A_{n,i},\Acal\y\rangle_\alpha|&\leq \frac 1 \Delta\int_0^\Delta |\Acal\y(x+t)|dx\|F_{n,i}\|_{\infty,1}\leq\|\y\|_\alpha\|F_{n,i}\|_{\infty,1}\\
|\langle\langle\A_{n,i},K_\ell\y\rangle\rangle_\alpha|
&=\lim_{z\to 0}|\langle \A_{n,i},K_\ell(\fdot+z)\y(\fdot+z)\rangle_\alpha|\\
&\leq \frac 1 \Delta\int_0^\Delta |K_\ell(x+t)\y(x+t)||F_{n,i}(x)|dx\\
&\leq \cc\|\y\|_\alpha \|F_{n,i}\|_{\infty,1} \frac 1 \Delta\int_t^{\Delta+t} K_\ell(x)^2dx,
\end{align*}
from which we can conclude that $\A_{n,i}\in D$. Next, using  again \eqref{eqn207} we have for each $\y\in \B$ 
$$|\langle \A_n-\A_t,\y^{\otimes k}\rangle_\alpha|\leq \bigg(\frac 1 \Delta\int_0^\Delta |y(x+t)|dx\bigg)^k\e_n
\leq \cc^k\|y\|_\alpha^k\e_n,$$
which by \eqref{eqn8} proves that $\| \A_n-\A_t\|_{*k}\leq \cc^k\e_n$ and thus that
$\|\A_n-\A_t\|_{*k}\to0$ for $n$ going to infinity. Since for each $\y\in \operatorname{dom}(\mathcal{A})$ we have again by  \eqref{eqn207}
\begin{align*}
|\langle \Lcal_k \A_n&-\Lcal_k \A_t,\y^{\otimes k}\rangle_\alpha|
\leq \bigg(\frac 1 {\Delta^k}\int_{[0,\Delta]^k}|\Mcal_k\y^{\otimes k}( x+t1)|d x\bigg) \e_n\\
&\leq \Big(k\cc ^{k-1}
+\frac{k(k-1)}2\cc ^{k}\frac 1 \Delta\sum_{\ell=1}^m\int_t ^{\Delta+t} K_\ell(x)^2d x\Big)\|y\|_\alpha^k\e_n,
\end{align*}
and $\operatorname{dom}(\mathcal{A})$ is dense in $\B$, we can similarly conclude that $\|\Lcal_k \A_n-\Lcal_k \A_t\|_{*k}\to0$. As a side product of this computation we also get that
\begin{equation}\label{eqn208}
|\langle \Lcal_k \A_t,\y\rangle_\alpha|\leq c_t \|\y\|_\times,\qquad\y\in \B^{\otimes k},
\end{equation}
where $c_t:=(k\cc ^{k-1}
+\frac{k(k-1)}2\cc ^{k}\frac 1 \Delta\sum_{\ell=1}^m\int_t ^{\Delta+t} K_\ell(x)^2d x)\|F\|_{\infty,k}$.

\emph{Claim:} $\partial_t\A_t=\Lcal_k\A_t$ \\
Fix $\y\in \operatorname{dom}(\mathcal{A})$ and define $m_t:=Z_t\y^{\otimes k}$.  Observe that for all $ x\in(0,\Delta]^k$ and $t\in[0,T]$ we have 
\begin{align*}
|\partial_tm_t( x)|
&=|\partial_t(\y^{\otimes k}(x+t1)e^{\int_0^tV_k(x+\tau1)d\tau})|\\
&=|(\sum_{i=1}^k y'(x_i+t)\prod_{j\neq i}\y(x_j+t)+\prod_{i=1}^k\y(x_i+t)V_k(x+t1))e^{\int_0^tV_k(x+\tau1)d\tau}|\\
&\leq(k \|\y'\|_\alpha  \|\y\|_\alpha ^{k-1}\cc ^{k}+\cc ^{k}\overline V_k(x)\|\y\|_\alpha^k)C_T,
\end{align*}
where  $\overline V_k( x):=\sup_{t\in[0,T]}V_k(x+1t)$. Condition \eqref{eqn209} then implies that the map $x\mapsto\sup_{t\in[0,T]}|\partial_tm_t( x)|$ is dominated by an integrable function. Hence,   
 the Leibniz integral rule and \eqref{eqn9} yield
 \begin{align*}
\partial_t\langle \A_t,\y^{\otimes k}\rangle_\alpha
&=\frac 1 {\Delta^k}\int_{[0,\Delta]^k}\partial_tm_t( x) d x\\
&=\frac 1 {\Delta^k}\int_{[0,\Delta]^k}\partial_t\big(\y^{\otimes k}(x+1t)e^{\int_0^tV_k(x+\tau1)d\tau}\big) d x\\
&=\frac 1 {\Delta^k}\int_{[0,\Delta]^k} \Mcal_k\y^{\otimes k}( x +1t)e^{\int_0^tV_k(x+\tau1)d\tau} d x\\
&=\langle \Lcal_k\A_t,\y^{\otimes k}\rangle_\alpha,
 \end{align*}
for each $\y\in \operatorname{dom}(\mathcal{A})$. Since $(\langle \Lcal_k\A_t,\y^{\otimes k}\rangle_\alpha)_{t\geq0}$ is continuous for all  $\y\in \operatorname{dom}(\mathcal{A})$, \eqref{eq:dualPDEBergomi} follows by the fundamental theorem of calculus.
By \eqref{eqn208} this result can be extended to all $\y\in \B$ and the claim follows.

\emph{Claim:} Conditions \ref{itiinew} and \ref{itii} of Theorem~\ref{thm1} hold. \\
We apply Lemma \ref{lem:sufficient}. The moment condition is satisfied by assumption and Condition \eqref{eqn104} holds since $\sup_{t\in[0,T]}c_t<\infty$ with $c_t$ given in \eqref{eqn208}.

The result now follows from Theorem \ref{thm1}.
\end{enumerate} 
\end{proof}

\begin{proof}[Proof of Proposition~\ref{prop1}]Recall that by Remark~\ref{rem:mildconcept}\ref{it1iii}, the process $(\lambda_t)_{t \geq 0}$ solves the martingale problem  for the polynomial operator $L:P^D\to P$.
We follow again the scheme of Section \ref{sec:vix}. Throughout the proof we shall need the following inequalities. Set $C_t:=e^{t\cc ^k\|V_k\|_{\times}}$ and note that $e^{\int_0^tV_k(f(\tau))d\tau}\leq C_t<\infty$ for each $f:[0,t]\to[0,\Delta+t]^k$ and each $t>0$. 
\begin{enumerate}
\item We observe that $M_n\vec y=(\Mcal_0\y_0,\ldots,\Mcal_n\y_n)$ for 
$$
\Mcal_k\y(x)=1^\top\nabla\y(x)+\int \y( x+\xi)\nu(x,d\xi),\qquad \y\in (\operatorname{dom}(\mathcal{A}))^{\otimes k},
$$
where $\mathcal{A}$  is given by \eqref{eqn5}. 

\item 
We now prove  that $m_t=Z_ty(x):=\E[e^{\int_0^tV_k(X^{(k)}_\tau)d\tau}\y(X^{(k)}_t)|X^{(k)}_0=x]$ solves \eqref{eqn204} on $\R_+^k\times [0,T]$ for each $\y\in \R+C^1_0(\R_+^k)$ and thus, since $(\operatorname{dom}(\Acal))^{\otimes k}\subseteq \R+C^1_0(\R_+^k)$,  for each $y\in (\operatorname{dom}(\Acal))^{\otimes k}$.
Indeed, this follows from the Feynman-Kac formula. To this end note that $\Mcal_k\y( x)=\Gcal^{(k)} \y( x)+V_k( x)\y(x)$ and, by the assumptions on $K$, the map $x \mapsto \int (\| \xi \|^2 \wedge 1) \nu(x, d\xi)$ is continuous and bounded. Thus there exists a unique solution to the  martingale problem  for $\Gcal^{(k)}:\R+C^1_0(\R_+^k)\to \R+C_0(\R_+^k)$ (see e.g.~Theorem III.2.34 in \cite{JS:03}). Moreover, by It\^o's formula $Z_ty$ is differentiable with respect ot $t$, whence the claim follows. 

In particular, $(Z_t)_{t \geq 0}$ is a strongly continuous semigroup on $\R+C_0(\R_+^k)$ with generator $\mathcal{M}_k$ implying that 
\begin{align}\label{eq:strongstrong}
\lim_{\e\to0}\frac 1 \e \|Z_{t+\e}y-Z_ty-\e\Mcal_k Z_ty\|_\times=0
\end{align}
for each $y \in \R+C^1_0(\R_+^k)$.
Finally, since $\P(X^{(k)}_t\in[0,\Delta+t]^k | X^{(k)}_0\sim \Ucal([0,\Delta]^k))=1$, by \eqref{eqn14} and \eqref{eqn207}  we get   that
$$
\begin{aligned}
|\langle \widehat \A^{\otimes k},Z_t\y^{\otimes k}\rangle_\alpha|
&=|\E[e^{\int_0^tV_k(X^{(k)}_\tau)d\tau}\y^{\otimes k}(X^{(k)}_t)|X^{(k)}_0\sim \Ucal([0,\Delta]^k)]|
\leq \cc ^kC_t \|\y\|_\alpha^k
\end{aligned}
$$
for each $\y\in \R+C_0(\R_+^k)$, and thus for each $\y\in \operatorname{dom}(\Acal))^{\otimes k}$. This proves \eqref{eq:linfunc}.

\item Due to \eqref{eq:linfunc} and using the notation  introduced in \eqref{eqn14}, we can now define a candidate solution  $\A_t\in (\B^{\otimes k})^*$ (in the sense of Definition \ref{def:sol}) of $\partial_t\A_t=\Lcal_k\A_t$ for $\A_0=\widehat \A^{\otimes k}$ as
 $$\langle \A_t,\y\rangle_\alpha:=\langle\widehat \A^{\otimes k},Z_t \y\rangle_\alpha
 ,\qquad \y\in \B^{\otimes k}.$$
 \item 
In order to conclude the proof we just need to check that $(\A_t)_{t\geq0}$ satisfies the conditions of the dual moment formula as given in  Theorem~\ref{thm1}\ref{iti}-\ref{itii}. Similarly as  in the proof of Proposition \ref{lem7} they consist in the following properties.

\emph{Claim:} $\partial_t\langle\A_t,\y^{\otimes k}\rangle_\alpha=\langle\A_t,\Mcal_k \y^{\otimes k}\rangle_\alpha$\\ 
 Observe that fixing $\y\in \R+C_0^1(\R_+^k)$ and using that $\langle\A_{t+\e},\y\rangle_\alpha=\langle\widehat\A^{\otimes k},Z_{t+\e}\y\rangle_\alpha
=\langle\widehat \A^{\otimes k},Z_{t}Z_\e \y\rangle_\alpha=\langle\A_{t}, Z_\e \y\rangle_\alpha$, by \eqref{eq:strongstrong} we can compute
\begin{align*}
\frac 1 \e|\langle\A_{t+\e},\y\rangle_\alpha-\langle\A_t,\y\rangle_\alpha-\e\langle\A_t,\Mcal_k \y\rangle_\alpha
&=\frac 1 \e|\langle\A_t,Z_{\e}\y\rangle_\alpha-\langle\A_t,\y\rangle_\alpha-\e\langle\A_t,\Mcal_k \y\rangle_\alpha|\\
&\leq\|\A_t\|_{*k}\frac 1 \e\|Z_{\e}\y-\y-\e \Mcal_k \y\|_\times
\to0.
\end{align*}
Hence, $\partial_t\langle\A_t,\y\rangle_\alpha=\langle\A_t,\Mcal_k \y\rangle_\alpha$ for all $y \in \R+C_0^1(\R_+^k)$.

\emph{Claim:} $\A_t\in \Dcal(\Lcal_k)$\\
 Observe that  since $(\y\mapsto  \int \y(x+\xi)\nu(x,d\xi))$ is a bounded linear operator we have that
$$\Dcal(\Lcal_k)=\Dcal({\Acal^*\otimes \Id^{\otimes (k-1)}}):=\{\A\in({\B^{\otimes k}})^*\colon \langle \A,\nabla\y^{\otimes k}\rangle_\alpha\leq C_\A\|\y\|_\alpha^k,\ \y\in\operatorname{dom}(\Acal)\}.$$

Fix $\y\in \R+C_0^1(\R_+)$ and note that 
$$ \langle \A_t,\Mcal_k\y^{\otimes k}\rangle_\alpha
=\partial_t\langle \A_t,{\y^{\otimes k}}\rangle_\alpha
=\partial_t\langle \widehat \A^{\otimes k},Z_t{\y^{\otimes k}}\rangle_\alpha
=\langle \widehat \A^{\otimes k},\Mcal_kZ_t{\y^{\otimes k}}\rangle_\alpha.$$
Since $\Mcal_kZ_t{\y^{\otimes k}}\in \R+C_0(\R_+^k)$ we know that $1^\top\nabla Z_t{\y^{\otimes k}}\in \R+C_0(\R_+^k)$. By the fundamental theorem of calculus we can thus compute
\begin{align*}
\langle \widehat \A^{\otimes k}&,\Mcal_kZ_t{\y^{\otimes k}}\rangle_\alpha
=\langle \widehat \A^{\otimes k},1^\top\nabla Z_t{\y^{\otimes k}}+\int Z_t\y^{\otimes k}(\fdot+\xi)\nu(\fdot,d\xi)\rangle_\alpha\\
&= \frac k \Delta \langle \widehat \A^{\otimes (k-1)}, Z_t{\y^{\otimes k}}(\Delta,\fdot)\rangle_\alpha
-\frac k \Delta \langle \widehat \A^{\otimes (k-1)}, Z_t{\y^{\otimes k}}(0,\fdot)\rangle_\alpha
+ \langle \widehat \A^{\otimes k},\int Z_t\y^{\otimes k}(\fdot+\xi)\nu(\fdot,d\xi)\rangle_\alpha\\
&=
  \frac k \Delta \E[e^{\int_0^tV_k(X^{(k)}_\tau)d\tau}\y^{\otimes k}(X^{(k)}_t)|X^{(k)}_0=\delta_\Delta\otimes \Ucal([0,\Delta]^{k-1})]\\
&-\frac k \Delta\E[e^{\int_0^tV_k(X^{(k)}_\tau)d\tau}\y^{\otimes k}(X^{(k)}_t)|X^{(k)}_0=\delta_0\otimes \Ucal([0,\Delta]^{k-1})] \\
&+\frac{k(k-1)}{2\Delta^2}\Big(\int_0^\Delta K(x)dx\Big)^2\E[e^{\int_0^tV_k(X^{(k)}_\tau)d\tau}\y^{\otimes k}(X^{(k)}_t)|X^{(k)}_0=\delta_0\otimes\delta_0\otimes \Ucal([0,\Delta]^{k-2})].
\end{align*}
An inspection of this expression yields
$$
| \langle \A_t,\Mcal_k\y^{\otimes k}\rangle_\alpha|\leq \Big(\frac{2k}\Delta+\frac{k(k-1)}{2\Delta^2}\int_0^\Delta K(x)^2dx\Big)
C_t \cc ^k\|\y\|^k_\alpha=:c_t\|\y\|^k_\alpha
$$
 for all  $t\in[0,T]$. 
 Since $\operatorname{dom}(\mathcal{A})\subseteq \R+C^1_0(\R_+)$ , the claim follows by noting that
$$\langle \A_t,\nabla\y^{\otimes k}\rangle_\alpha
\leq \bigg(c_t+\|\A_t\|_{*k}\frac{k(k-1)}2\|K\|_\alpha^2\bigg)\|\y\|_\alpha^k.$$

Claim: Conditions \ref{itiinew} and \ref{itii} of Theorem~\ref{thm1} hold.\\
 We again apply Lemma \ref{lem:sufficient}. The moment condition is satisfied by assumption and Condition \eqref{eqn104} holds since $\sup_{t\in[0,T]}c_t<\infty$ with $c_t$ given just above.

\end{enumerate} 

\end{proof}

\bibliographystyle{plainnat}

\end{document}